\numberwithin{equation}{section}
\theoremstyle{plain}
\newtheorem{thm}{Theorem}[section]
\newtheorem{lem}[thm]{Lemma}
\newtheorem{prop}[thm]{Proposition}
\theoremstyle{definition}
\newtheorem*{thm*}{Theorem}
\theoremstyle{remark}
\newtheorem*{rem}{Remark}
\newcommand{\om}{\omega}
\newcommand{\Ca}{\mathbb{C}^n}
\newcommand{\Ha}{\mathbb{H}^n}
\newcommand{\N}{\mathbb{N}}
\newcommand{\R}{\mathbb{R}}
\newcommand{\C}{\mathbb C}%
\newcommand{\Z}{\mathbb Z}%
\newcommand{\Bea}{\begin{eqnarray*}}
	\newcommand{\Eea}{\end{eqnarray*}}
\newcommand{\be}{\begin{equation}}
\newcommand{\ee}{\end{equation}}
\title[ Spherical means on Heisenberg group]
{On the lacunary spherical maximal function\\ on the Heisenberg group}
\author[P. Ganguly S. Thangavelu]{Pritam Ganguly and Sundaram Thangavelu}
\address[P. Ganguly, S. Thangavelu]{Department of Mathematics\\
 Indian Institute of Science\\
560 012 Bangalore, India}
\email{pritamg@iisc.ac.in}
\email{veluma@iisc.ac.in}
\keywords{Lacunary spherical means, Koranyi sphere, Heisenberg group, $L^p$-improving estimates, sparse domination, weighted theory.}
\subjclass[2010]{Primary: 43A80. Secondary: 22E25, 22E30, 42B15, 42B25.}
\thanks{}
\begin{document}
\maketitle

\begin{abstract} In this paper we investigate the $L^p$ boundedness of the lacunary maximal function $ M_{\Ha}^{lac} $ associated to the spherical means $ A_r f$ taken over Koranyi spheres on the Heisenberg group. Closely following  an approach used by  M. Lacey in the Euclidean case, we obtain sparse bounds for these maximal functions leading to new unweighted and weighted estimates. The key ingredients in the proof are the $L^p$ improving property of the operator $A_rf$ and a continuity property of the difference $A_rf-\tau_y A_rf$, where $\tau_yf(x)=f(xy^{-1})$ is the right translation operator. 

\end{abstract}

\section{ Introduction and the main results}
The study of spherical means has recieved considerable attention in the last few decades. In 1976, Stein first considered the spherical maximal function on $\mathbb{R}^n$ defined by 
	$$Mf(x) = \sup_{r>0} |f\ast \mu_r(x)| =  \sup_{r>0}\Big| \int_{|y|=r} f(x-y) d\mu_r(y)\Big|$$ where $\mu_r$ is the normalised measure on the Euclidean sphere of radius $r$. In \cite{Stein}, for $n\geq 3$   he proved that
	$$\|Mf\|_p\leq C \|f\|_p \ \text{if\ and\ only\ if} \ p>\frac{n}{n-1}.$$ Later Cowling-Mauceri \cite{CM} revisited this and proved Stein's result using completely different aruguments. In 1986, Bourgain \cite{Bourgain} settled the case $n=2$.
On the other hand, as proved by C. P. Calderon \cite{C}, the lacunary spherical maximal function 
 $$M_{lac}f(x) = \sup_{j\in\mathbb{Z}} |f\ast \mu_{2^j}(x)|$$
turned out to be bounded on $ L^p(\R^n) $ for all $ 1 < p < \infty.$ Recently, M. Lacey has come up with a new idea to prove these two results and much more. In \cite{Lacey} he has obtained sparse bounds for both $ M $ and $ M_{lac} $ leading to new weighted norm estimates.
 
Our aim in this paper is prove sparse bounds for the lacunary spherical maximal functions  on the Heisenberg group. Recall that  the Heisenberg group $\Ha:=\Ca \times\mathbb{R}$ is equipped with the group operation $$(z, t).(w, s):=\big(z+w, t+s+\frac{1}{2}\Im(z.\bar{w})\big),\ \forall (z,t),(w,s)\in \Ha.$$ 
On this group we have a family of non-isotropic dilations defined by $\delta_r(z,t):=(rz,r^2t), \ \forall (z,t)\in\Ha$ for every $ r > 0.$ These $ \delta_r $ are automorphisms of the group $ \Ha.$ The Koranyi norm of $(z,t)$ in $\Ha$ is defined  by  $$|(z,t)|:=(|z|^4+16t^2)^{\frac{1}{4}}$$
and it is easy to see that $|\delta_r(z,t)|=r|(z,t)|$, i.e., the norm is homogeneous of degree 1 relative to these non-isotropic dilations.  The Haar measure on $ \Ha $ is simply the Lebesgue measure $ dz dt.$  As in the Euclidean case, this Haar measure has a polar decomposition. Let $S_K:=\{(z,t)\in\Ha:|(z,t)|=1\}$ be the unit sphere with respect to the Koranyi norm. Then there is a unique  Radon measure $ \sigma $ on $S_K$ such that for every integrable function on $\Ha$ we have
\begin{align}
 \int_{\Ha}f(z,t)dzdt=\int_{0}^{\infty}\int_{S_K}f(\delta_r\omega)r^{Q-1}d\sigma(\omega)dr 
\end{align}
where $ Q = (2n+2) $ is known as the homogeneous dimension of  $\Ha.$

Let us fix some more notations first. Given $f$  on $\Ha,$ its dilation $\delta_r f $ is simply defined by $ \delta_r f(z,t)=f(\delta_r(z,t)),\ \forall (z,t)\in \Ha.$ 
More generally, if $U$ is a distribution on $\Ha$, then its dilation $\delta_rU$ is defined by $\langle \delta_rU, \phi \rangle :=\langle U, \delta_r \phi\rangle,\ \forall \phi\in C^{\infty}_{c}(\Ha).$ We let $ \sigma_r = \delta_r \sigma $ and define the spherical mean value operator $ A_r $ by 
$$ A_rf(x) = f\ast \sigma_r(x) = \int_{|y|=1} f(x.\delta_r y^{-1}) d\sigma(y).$$
The associated spherical maximal function $ M_{\Ha}f(x) = \sup_{r>0} |A_rf(x)| $ was studied by M. Cowling in \cite{Cowling} where he established the following result.

\begin{thm}  The spherical maximal function $ M_{\Ha} $ is bounded on $ L^p(\Ha) $ for all $ p > \frac{2n+1}{2n}.$
\end{thm}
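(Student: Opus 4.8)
The plan is to follow Stein's Fourier-analytic strategy for spherical maximal functions, transplanted to $\Ha$ and organised around the automorphic dilations. Since the Koranyi norm is $\delta_r$-homogeneous of degree one, the operators $A_r$ obey the scaling relation $A_rf=\delta_{1/r}\big(A_1(\delta_rf)\big)$, so after a Littlewood--Paley decomposition it suffices to estimate one dyadic block uniformly. Concretely, fix a decomposition $\sigma=\sum_{j\ge 0}\sigma^{(j)}$ localising the Fourier transform of $\sigma$ to frequencies $\sim 2^j$ (either the Euclidean frequency on $\R^{2n+1}$, or, more intrinsically, the joint spectrum of the sub-Laplacian and $-i\partial_t$), and write $M_{\Ha}f\le \sum_{j\ge 0}M_jf$ with $M_jf=\sup_{r>0}|f\ast\delta_r\sigma^{(j)}|$. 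The block $j=0$ has a smooth, rapidly decaying kernel, is pointwise dominated by the Hardy--Littlewood maximal function on $\Ha$, and is therefore bounded on $L^p(\Ha)$ for every $p>1$; all the work is in the blocks $j\ge 1$.

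For $j\ge 1$ I would prove two estimates and interpolate. The first is an $L^2$ bound with exponential gain, $\|M_jf\|_2\lesssim 2^{-\beta j}\|f\|_2$, obtained from the square-function inequality
\[
\sup_{r>0}|g_r|^2 \;\le\; \sum_{k\in\Z}|g_{2^k}|^2 \;+\; 2\Big(\int_0^\infty|g_r|^2\,\tfrac{dr}{r}\Big)^{1/2}\Big(\int_0^\infty|\partial_rg_r|^2\,r\,dr\Big)^{1/2},
\]
applied to $g_r=f\ast\delta_r\sigma^{(j)}$, followed by the Plancherel theorem for the group Fourier transform of $\Ha$ (equivalently, the partial Fourier transform in the central variable together with twisted convolution on $\C^n$). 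This reduces matters to a decay estimate for the operator norm $\|\widehat{\sigma}(\lambda)\|$ — equivalently, to the decay of the Euclidean Fourier transform of surface measure on the Koranyi sphere, together with its first derivatives. The second is a standard cheap bound $\|M_jf\|_{L^p\to L^p}\lesssim 2^{j}$ valid for all $1<p<\infty$, coming from $\|\delta_r\sigma^{(j)}\|_1\lesssim 1$ uniformly in $r$ and the fact that taking the supremum in $r$ of a frequency-$2^j$ object costs one derivative, i.e. a factor $2^j$. Interpolating these and summing over $j$ yields boundedness of $M_{\Ha}$ on $L^p$ for $p>\frac{2(1+\beta)}{1+2\beta}$, and the target range $p>\frac{2n+1}{2n}$ corresponds precisely to the sharp value $\beta=n-\tfrac12$.

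The crux — and the step I expect to be the main obstacle — is establishing the $L^2$ decay with the sharp exponent $\beta=n-\tfrac12$, because the Koranyi sphere is \emph{not} uniformly curved. Up to a smooth nonvanishing density, $\sigma$ is Euclidean surface measure on $\{|z|^4+16t^2=1\}$, and this hypersurface of $\R^{2n+1}$ has nonvanishing Gaussian curvature everywhere except at the two poles $(0,\pm\tfrac14)$, near which it is the flat graph $t\mp\tfrac14\sim|z|^4$. Consequently $|\widehat{\sigma}(\xi)|\lesssim(1+|\xi|)^{-n}$ for $\xi$ outside a fixed conic neighbourhood of the $t$-axis — the decay rate of a genuinely curved hypersurface in $\R^{2n+1}$ — but stationary phase at the quartic poles only gives $|\widehat{\sigma}(\xi)|\lesssim(1+|\xi|)^{-n/2}$ as $\xi$ approaches the $t$-axis. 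Feeding the worst-case rate $(1+|\xi|)^{-n/2}$ into the scheme above gives only $\beta=\tfrac{n-1}{2}$, hence the non-sharp range $p>\tfrac{n+1}{n}$; to recover $\beta=n-\tfrac12$ one must exploit that the slow decay occurs only in a conically one-dimensional set of directions. One natural device is to decompose the polar caps dyadically in $|z|$: the piece of the sphere with $|z|\sim 2^{-\ell}$, rescaled to unit size, is a thin but genuinely curved spherical cap, so the curved $L^2$ estimate applies to each piece while the flatness at the pole only produces $\ell$-dependent rescaling factors, and the sum over $\ell$ converges precisely because the flat set has codimension $2n\ge 2$ in $S_K$. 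Carrying this out — essentially an $L^p$-improving/square-function analysis of the microlocalised pole pieces, in the spirit of the $L^p$-improving estimates for $A_r$ emphasised in this paper — gives $\beta=n-\tfrac12$, after which the interpolation and the summation in $j$ are routine and produce the stated range $p>\frac{2n+1}{2n}$.
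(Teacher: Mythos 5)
A point of orientation first: the paper does not prove this statement. It is quoted as Cowling's theorem \cite{Cowling} (see also Fischer \cite{VF}) and used only as background, so there is no in-paper proof to match your sketch against. The architecture you propose — a Littlewood--Paley decomposition of $\sigma$, an $L^2$ bound with exponential gain via the Sobolev-in-$r$ square-function inequality plus Plancherel, interpolation with a cheap bound, and summation over the frequency blocks — is indeed the architecture of the known proofs, and your arithmetic is consistent: $\beta=n-\tfrac12$ does yield $p>\tfrac{2n+1}{2n}$.

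The genuine gap is in the key $L^2$ step. You reduce the required decay to ``the decay of the Euclidean Fourier transform of surface measure on the Koranyi sphere,'' and your entire crux paragraph — the Gaussian curvature of $\{|z|^4+16t^2=1\}\subset\R^{2n+1}$, stationary phase at the quartic poles, the dyadic decomposition of the polar caps in $|z|$ — analyses the Euclidean transform $\widehat{\sigma}(\xi)$. But $f\mapsto f\ast\sigma_r$ is a \emph{group} convolution, hence not a Euclidean Fourier multiplier, and its $L^2(\Ha)$ operator norm is not $\sup_\xi|\widehat{\sigma}(\xi)|$. What the group Plancherel theorem actually produces is $\sup_{\lambda}\|\widehat{\sigma}(\lambda)\|=\sup_{k,\lambda}|R_k(\lambda,\sigma)|$, where $R_k(\lambda,\sigma)$ is the Laguerre coefficient given by the Faraut--Harzallah formula recorded in Section 2.4 of the paper; its decay in the joint spectral parameter is a Laguerre-asymptotics/oscillatory-integral computation, not a Euclidean stationary-phase computation on the fixed hypersurface. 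The two are genuinely inequivalent: for the complex spheres $\{(z,0):|z|=r\}$ the Euclidean Fourier transform has no decay whatsoever in the central frequency, yet the group convolution operator has ample spectral decay, because the relevant geometry is the rotational curvature of the translated surfaces $x\cdot\delta_rS_K^{-1}$ (the group twist supplies curvature), not the Gaussian curvature of $S_K$ as a subset of $\R^{2n+1}$. Consequently your diagnosis of the main obstacle (the flat Euclidean poles) and your remedy (rescaled caps with $|z|\sim 2^{-\ell}$) are aimed at the wrong operator; to close the argument you must estimate $\sup_k|R_k(\lambda,\sigma^{(j)})|$ directly, as Fischer does, or run the variable-surface oscillatory integral with the twist accounted for. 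Until that estimate is established, the claimed gain $\beta=n-\tfrac12$ — and hence the stated range of $p$ — is unsubstantiated. (A secondary, more routine gap: the bound $\|M_j\|_{L^p\to L^p}\lesssim 2^j$ needs an additional decomposition of $f$ to tame the divergent $dr/r$ integral, but that is standard.)
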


This is the Heisenberg analogue of Stein's theorem on $ \R^n.$ 
 Also in \cite{VF}, using a square function argument, Fischer proved the boundedness of the spherical maximal operator on functions on free step two nilpotent Lie groups which includes the above result of Cowling. 
In this paper we establish the following analogue of Calderon's theorem for the Heisenberg group. Let
$$ M_{\Ha}^{lac}f(x) = \sup_{k \in \Z} |A_{\delta^k} f(x)| $$ where $ \delta \in (0,1/96) $ is fixed be the lacunary spherical maximal function on $ \Ha.$ We prove:

\begin{thm} 
 \label{thm:spherical}
 Let $ n \geq 2.$ The lacunary spherical maximal function $ M_{\Ha}^{lac}  $ is bounded on $ L^p(\Ha) $ for all $ 1< p < \infty.$
\end{thm}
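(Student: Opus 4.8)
The plan is to follow the classical route of C.~P.~Calder\'on, adapted to the space of homogeneous type $\Ha$, with the decay/smoothing properties of $A_r$ as the quantitative input; I will also indicate how the sparse bounds announced in the introduction subsume the statement. First I would separate off an approximate identity: fix $\psi\in C_c^\infty(\Ha)$ supported near the identity with $\int_\Ha\psi\,dz\,dt=1$ and put $\psi_r(z,t)=r^{-Q}\psi(\delta_{1/r}(z,t))$, so that $\psi_r\to\delta_0$ and $\sup_{r>0}|f*\psi_r|\le C\,\mathcal M f$, where $\mathcal M$ is the Hardy--Littlewood maximal operator over Kor\'anyi balls. Since the Kor\'anyi balls form a doubling family, $\mathcal M$ is bounded on $L^p(\Ha)$ for $1<p\le\infty$. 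Writing $\nu=\sigma-\psi$, a compactly supported distribution with $\nu(\Ha)=0$, and $\nu_r=\delta_r\nu$, one has $M_{\Ha}^{lac}f\le C\,\mathcal M f+Sf$ pointwise, where $Sf=\big(\sum_{k\in\Z}|f*\nu_{\delta^k}|^2\big)^{1/2}$, so it suffices to bound $S$ on $L^p(\Ha)$ for $1<p<\infty$.

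For the $L^2$ bound I would use the group Fourier transform and Plancherel on $\Ha$, which reduce the matter to the uniform estimate $\sup_{\lambda\ne0}\sum_{k\in\Z}\|\widehat{\nu_{\delta^k}}(\lambda)\|_{\mathrm{op}}^2\le C$. The two tails of this sum are governed by the two features of $\nu$: the cancellation $\nu(\Ha)=0$ forces $\|\widehat{\nu_{\delta^k}}(\lambda)\|_{\mathrm{op}}\to0$ at a power rate as $\delta^k\to0$, while the decay of the spherical measure forces $\|\widehat{\nu_{\delta^k}}(\lambda)\|_{\mathrm{op}}\to0$ at a power rate as $\delta^k\to\infty$, so the two geometric series sum to a constant. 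The second ingredient is a smoothing estimate for $A_1$ and may be read off either from pointwise bounds for $\widehat\sigma(\lambda)$ in terms of Laguerre functions, or from the stationary-phase decay of $\widehat\sigma$ regarded as a surface-carried measure on $\R^{2n+1}$, the worst directions being the two poles $(0,\pm1/4)$ where the Gaussian curvature of the Kor\'anyi sphere degenerates. The lacunarity $\delta<1/96$ is not needed for this step, but, after a Littlewood--Paley decomposition, it makes the frequency annuli attached to distinct $k$ well separated, which is convenient below.

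For $p\ne2$ I would view $S$ as an $\ell^2$-valued Calder\'on--Zygmund operator on $\Ha$: its $L^2\!\to\!L^2(\ell^2)$ bound is the previous step, and the vector-valued H\"ormander regularity of its kernel $x\mapsto(\nu_{\delta^k}(x))_{k\in\Z}$ is exactly the continuity of the difference $A_rf-\tau_yA_rf$ referred to in the abstract, obtained by combining the compact support and dilation structure of $\nu$ with a quantitative modulus of continuity for $A_1$ (which follows by interpolating the $L^2$ smoothing estimate underlying Step~2 against the trivial $L^1$ bound, or directly from the $L^p$-improving property of $A_r$). This gives $L^p$-boundedness of $S$, hence of $M_{\Ha}^{lac}$, for all $1<p<\infty$; if one prefers to keep the kernel estimates elementary, one can first split $\nu=\sum_{j\ge0}P_j\nu$ into Littlewood--Paley pieces, prove the $\ell^2$-valued $L^p$ bound for the $j$-th piece with at most polynomial growth in $j$, interpolate that against the geometric gain $2^{-cj}$ that Step~2 yields for the same piece, and sum in $j$.

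The main obstacle, as usual for such problems, is the two quantitative inputs on $\Ha$ themselves — the decay of $\widehat\sigma(\lambda)$ and the $L^p$-improving/smoothing estimate for $A_1$ — where the non-commutative harmonic analysis of $\Ha$ and the degenerate geometry of the Kor\'anyi sphere enter and where the hypothesis $n\ge2$ is needed. Granting the $L^p$-improving property of $A_r$ — the same ingredient that powers the sharper sparse bounds — Theorem~\ref{thm:spherical} follows either from the square-function argument sketched here or, more economically, as the $L^p$-boundedness corollary of the sparse domination theorem.
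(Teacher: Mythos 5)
Your proposal reaches Theorem \ref{thm:spherical} by a genuinely different route from the paper. The paper deduces the theorem as a corollary of the sparse domination in Theorem \ref{thm:sparse}, which it proves by transplanting Lacey's scheme to $\Ha$: Hyt\"onen--Kairema dyadic grids for the left-invariant Kor\'anyi metric, the $L^p$-improving estimate for $A_r$ from Section 3, and the continuity estimate for $A_r-A_r\tau_{\bf{a}}$ from Section 4; the unweighted $L^p$ bounds then fall out of the $L^p\times L^{p'}$ bound for sparse forms quoted from \cite{CUMP}. You instead propose the classical Calder\'on/Duoandikoetxea--Rubio de Francia architecture: split $\sigma=\psi+\nu$ with $\nu$ of mean zero, dominate the lacunary maximal function by the Hardy--Littlewood maximal function plus the square function $Sf=\big(\sum_k|f\ast\nu_{\delta^k}|^2\big)^{1/2}$, prove the $L^2$ bound by Plancherel, and upgrade to $L^p$ by vector-valued Calder\'on--Zygmund theory or by interpolating Littlewood--Paley pieces against the $L^2$ gain. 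That architecture is sound on $\Ha$ (it is close in spirit to Fischer's square-function treatment of the full maximal operator, which the paper cites), and it is more economical if one wants only the unweighted statement; what it does not yield is the sparse form itself and hence the weighted estimates of Section 5, which are the paper's main point. Your closing remark --- that the theorem also follows at once from the sparse bound --- is precisely the paper's actual proof.

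One caveat about your quantitative inputs, which are where all the work lives. For the $L^2$ step the relevant quantity is the operator norm of the \emph{group} Fourier transform $\widehat{\nu_{\delta^k}}(\lambda)$, i.e.\ $\sup_k|R_k(\lambda,\nu_{\delta^k})|$ in the Laguerre expansion of radial measures; the Euclidean stationary-phase decay of $\sigma$ viewed as a surface-carried measure on $\R^{2n+1}$ is not the right object, since $f\ast\nu$ is a group convolution whose $L^2\to L^2$ norm is $\sup_{\lambda\neq 0}\|\hat{\nu}(\lambda)\|$, not anything controlled by the Euclidean Fourier transform. The needed Laguerre-coefficient decay is genuinely nontrivial: the paper points out that $R_k(\lambda,\sigma_r)$ cannot be evaluated in closed form, and for exactly this reason it routes every estimate through the Cowling--Mauceri representation $\sigma=P_1+(2\pi)^{-1}\int d(Q,\gamma)I_\gamma\,d\gamma$ rather than through $\hat{\sigma}(\lambda)$ directly. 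So your Step 2 is not something to ``read off''; it is the main analytic content and would have to be supplied, e.g.\ from Fischer's oscillatory estimates for these Laguerre integrals. Likewise, the H\"ormander-type regularity of the $\ell^2$-valued kernel $(\nu_{\delta^k})_k$ that you invoke for $p\neq 2$ is exactly the content of the paper's Section 4, and that is where the hypothesis $n\ge 2$ actually enters (the $L^p$-improving property alone is proved for $n\ge 1$). With those two inputs granted, your sketch does give a correct alternative proof of the unweighted theorem.
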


This is the analogue of  Calderon's theorem for the  spherical  averages  on Koranyi spheres.  We establish the above result by closely following Lacey \cite{Lacey} and proving  a sparse domination of the lacunary spherical maximal function stated below.
Theorem \ref{thm:spherical}, as well as certain weighted versions that are stated in Section 5 are easy consequences of the sparse bound in Theorem \ref{thm:sparse}, which is the main result of this paper. In order to state the result we  need to  set up some notation. 

As in the case of $ \R^n $, there is a notion of dyadic grids on $ \Ha $, the members of which are called (dyadic) cubes.
A collection of cubes $\mathcal{S}$ in $\Ha $ is said to be $ \eta$-sparse if there are sets $\{E_S \subset S:S\in \mathcal{S}\}$ which are pairwise disjoint and satisfy $|E_S|>\eta|S|$ for all $S\in \mathcal{S}$. For any cube $Q$ and $1<p<\infty$, we define
 \[ \langle f\rangle_{Q,p}:=\bigg(\frac{1}{|Q|}\int_Q|f(x)|^pdx\bigg)^{1/p}, \qquad  \langle f\rangle_{Q}:=\frac{1}{|Q|}\int_Q|f(x)|dx. \]
 In the above, $ x =(z,t) \in \Ha $ and $ dx = dz dt $ is the Lebesgue measure on $ \C^n \times \R $, which, as we have already mentioned, is the Haar measure on the Heisenberg group.
Following  Lacey \cite{Lacey}, by the term  $(p,q)$-sparse form we mean the following: 
\[\Lambda_{\mathcal{S},p,q}(f,g)=\sum_{S\in\mathcal{S}}|S|\langle f\rangle_{S,p}\langle g\rangle_{S,q}.\]

\begin{thm} 
\label{thm:sparse}
Assume $ n \geq 2$ and fix $  0<\delta< \frac{1}{96}$. Let $ 1 < p, q < \infty $ be such that $ (\frac{1}{p},\frac{1}{q}) $ belong to the interior of the triangle joining the points $ (0,1), (1,0) $ and $ (\frac{2n}{2n+1},\frac{2n}{2n+1}).$ Then for any  pair of compactly supported bounded functions $ (f,g) $ there exists a $ (p,q)$-sparse form such that $ \langle  M_{\Ha}^{lac}f, g\rangle \leq C \Lambda_{\mathcal{S},p,q}(f,g).$
\end{thm}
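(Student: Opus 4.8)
The plan is to follow the now-standard Lacey scheme for sparse domination of maximal operators, adapted to the Heisenberg setting. First I would carry out the decomposition of the single spherical average. Fix a smooth partition of unity $\sum_{j\geq 0}\psi_j=1$ on $\Ha$ adapted to the Koranyi dilation structure, with $\psi_0$ supported near the origin and $\psi_j$ supported where $|x|\sim 2^{-j}$ (or an analogous decomposition at scale $\delta^k$). Writing $A_r f=\sum_j A_r^{(j)} f$ where $A_r^{(j)}f=f\ast(\psi_j\sigma_r)$, the two facts I need about each piece are exactly the two ingredients flagged in the abstract: an $L^p$-improving estimate $\|A_r^{(j)}f\|_q\lesssim 2^{-j\epsilon(p,q)}r^{\ldots}\|f\|_p$ for $(1/p,1/q)$ in the stated open triangle, and a Hölder-type continuity estimate $\|A_r^{(j)}f-\tau_y A_r^{(j)}f\|_q\lesssim (2^j|y|/r)^{\alpha}\,2^{-j\epsilon}\|f\|_p$ quantifying the gain when $y$ is small compared to the scale $r$. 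Both should be available either from Cowling's analysis underlying Theorem 1.1 or, more cleanly, from the oscillatory-integral/partial-Fourier-transform description of $\sigma_r$ on $\Ha$; I would cite or prove these as preliminary lemmas before attacking the sparse bound itself.

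With these in hand, the sparse form is built by the usual stopping-time / corona argument. Given compactly supported bounded $f,g$, one works inside a large cube $Q_0$ and recursively selects a sparse family $\mathcal{S}$: starting from a cube $Q\in\mathcal{S}$, one forms the stopping children as the maximal subcubes $Q'$ on which $\langle f\rangle_{Q',p}>C\langle f\rangle_{Q,p}$ or $\langle g\rangle_{Q',q}>C\langle g\rangle_{Q,q}$; the Calderón–Zygmund nature of this selection (valid on the dyadic grids on $\Ha$ described in the text) guarantees $\eta$-sparseness for a suitable $\eta$. The core estimate to prove is then the local one: for a fixed cube $Q$, the portion of $\langle M_{\Ha}^{lac}f,g\rangle$ coming from scales $\delta^k$ comparable to $\ell(Q)$, restricted to $Q$ and with $f$ not yet passed to a stopping child, is bounded by $C|Q|\langle f\rangle_{Q,p}\langle g\rangle_{Q,q}$. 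Summing this over the sparse family yields the theorem. The lacunary (rather than full) supremum is what makes this work for the whole open triangle: for each $x$ only the scales $\delta^k$ within a bounded range of $\ell(Q)$ contribute to the local piece, so one does not need a maximal-function bound, just the fixed-scale $L^p$-improving estimate summed geometrically in $j$.

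The two places where care is needed are: (i) handling the tails, i.e. the scales $\delta^k$ much smaller or much larger than $\ell(Q)$. Large scales are controlled because the average then sees $f$ essentially at the level of a coarser cube already accounted for, and small scales are where the continuity estimate $A_r f-\tau_y A_r f$ enters — discretising $y$ over a grid at scale $\delta^k$ inside $Q$ and using the Hölder gain $(2^j \delta^k/\ell(Q)\cdots)^{\alpha}$ to sum the errors. This is precisely Lacey's "smoothing inequality" step and I expect it to be the main obstacle, since on $\Ha$ the translation $\tau_y$ does not commute with the dilations $\delta_r$ and one must track the interaction of the group law with the Koranyi balls carefully; the non-commutativity means the difference operator must be estimated on the group directly rather than by a Fourier multiplier computation. (ii) Keeping the geometry of dyadic cubes versus Koranyi balls straight — on the Heisenberg group dyadic "cubes" are only comparable to metric balls up to fixed constants, which forces the $\delta<1/96$ hypothesis (so that a sphere of radius $\delta^k$ sits well inside the relevant cube) and some bookkeeping with dilated cubes $3Q$, $\kappa Q$ etc. Once the local estimate and the tail estimate are established, assembling the sparse bound and then deducing Theorem 1.2 (and the weighted statements of Section 5) from it is routine, via the standard fact that $(p,q)$-sparse forms for $(1/p,1/q)$ in an open region below the line $1/p+1/q=1$ are bounded on $L^r$ for an interval of $r$ around $2$ containing, after optimising over the triangle, all of $(1,\infty)$.
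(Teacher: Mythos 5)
Your combinatorial skeleton --- Hyt\"onen--Kairema dyadic grids on $\Ha$, linearisation of the supremum, stopping cubes selected where $\langle f\rangle_{Q',p}$ or $\langle g\rangle_{Q',q}$ exceeds a fixed multiple of the parent average, the local estimate $\lesssim |Q|\langle f\rangle_{Q,p}\langle g\rangle_{Q,q}$, and the deduction of $L^p$ bounds for all $1<p<\infty$ from the openness of the triangle near $(0,1)$ and $(1,0)$ --- is exactly the paper's Section 5 (which in turn follows Lacey and Bagchi--Hait--Roncal--Thangavelu). The genuine gap is in the two analytic inputs, which you assert ``should be available \ldots from the oscillatory-integral/partial-Fourier-transform description of $\sigma_r$'' and defer as preliminary lemmas. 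Those two estimates are the actual content of the paper (Sections 3 and 4), and the route you sketch to them does not work as written. The decomposition $A_r^{(j)}f=f\ast(\psi_j\sigma_r)$ with $\psi_j$ supported where $|x|\sim 2^{-j}$ is degenerate: $\sigma_r$ is carried by the Koranyi sphere $|x|=r$, so $\psi_j\sigma_r=0$ for all but the single $j$ with $2^{-j}\sim r$. What you presumably intend is a Littlewood--Paley decomposition of $\sigma_r$ in frequency, but on $\Ha$ the group Fourier transform $\widehat{\sigma_r}(\lambda)$ is an operator whose Laguerre coefficients have no usable closed form, so the Euclidean derivation of the geometric gain $2^{-j\epsilon}$ for the pieces is simply not available; this is precisely why the problem is nontrivial on the Heisenberg group.

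What the paper does instead: the $L^p$-improving bound comes from embedding $A_r$ in the analytic family $A_{r,\alpha}f=f\ast\phi_{r,\alpha}$ via the Cowling--Mauceri-type representation $\sigma_t=P_t+\frac{1}{2\pi}\int d(Q,\gamma)t^{-i\gamma}I_\gamma\,d\gamma$, with the $L^2$ endpoint at negative $\Re(\alpha)$ resting on the Cowling--Haagerup formula for $\widehat{|\cdot|^{-Q+i\gamma}}(\lambda)$ and Stein interpolation producing the triangle with vertex $(\frac{2n}{2n+1},\frac{1}{2n+1})$. The continuity estimate is obtained by differentiating the mollified kernels $\Phi_{r,\alpha}$ along the curve $s\mapsto(\delta_s{\bf{a}})y$, which brings in the right-invariant vector fields $\widetilde{X}_j,\widetilde{Y}_j$ and kernels of the form $|x|^{-3}\widetilde{X}_j(|x|^4)I_\gamma(x)$, whose Fourier transforms are controlled through creation/annihilation operators; the hypothesis $n\geq 2$ enters exactly to make the resulting $\gamma$-integral converge, a point your outline never touches. ``Cowling's analysis underlying Theorem 1.1'' concerns the full maximal operator and does not supply the fixed-scale off-diagonal $L^p\to L^q$ or H\"older-continuity bounds you need. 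In short, the proposal is an accurate map of the sparse-domination machine but leaves its engine --- the two Heisenberg-specific estimates --- unbuilt.
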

We remark that as  in the Euclidean case we can take any $\delta>0$ in Theorem \ref{thm:sparse}, in particular we can choose $\delta=1/2$.

Also on Heisenberg group, Nevo-Thangavelu considered the spherical mean taken over complex spheres $\{(z,0)\in\Ha:|z|=r\}$. In \cite{NeT}, they showed that the corresponding maximal function is bounded on $L^p(\Ha)$ for $p>\frac{2n-1}{2n-2}.$ Later Narayanan-Thangavelu in \cite{NaT} and independently Muller-Seeger in \cite{MS} improved that result and proved that the maximal function is bounded on $L^p(\Ha)$ if and only if $p>\frac{2n}{2n-1}.$ Recently, Bagchi et al \cite{BHRT} have proved the analogue of Calderon's theorem for  the associated lacunary spherical maximal function by obtaining  a sparse bound as in Lacey \cite{Lacey}.

The plan of the paper is as follows. After collecting some relevant results from the harmonic analysis on Heisenberg groups in Section 2 we establish the $ L^p $ improving properties of the spherical averages in Section 3. In Section 4 we study the continuity properties of the same. Finally in Section 5 we sketch the proof of the sparse domination (i.e. proof of Theorem \ref{thm:sparse}) and deduce weighted and unweighted inequalities.

\section{Preliminaries}
\subsection{ Fourier transform on $ \Ha$}
In this section we collect some basic results from the harmonic analysis on Heisenberg groups that will play important roles in the study of the spherical maximal function.  The Heisenberg group $ \Ha$ introduced in the previous section is a nilpotent Lie group which is non-commutative and yet with a very simple representation theory. For each non zero real number $ \lambda $ we have an infinite dimensional representation $ \pi_\lambda $ realised on the Hilbert space $ L^2( \R^n).$ These are explicitly given by
$$ \pi_\lambda(z,t) \varphi(\xi) = e^{i\lambda t} e^{i(x \cdot \xi+ \frac{1}{2}x \cdot y)}\varphi(\xi+y),\,\,\,$$
where $ z = x+iy $ and $ \varphi \in L^2(\R^n).$ These representations are known to be  unitary and irreducible. Moreover, upto unitary equivalence these account for all the infinite dimensional irreducible unitary representations of $ \Ha.$ As the finite dimensional representations of $ \Ha$ do not contribute to the Plancherel measure  we will not describe them here.

The Fourier transform of a function $ f \in L^1(\Ha) $ is the operator valued function obtained by integrating $ f $ against $ \pi_\lambda$:
$$ \hat{f}(\lambda) = \int_{\Ha} f(z,t) \pi_\lambda(z,t)  dz dt .$$  Note that $ \hat{f}(\lambda) $ is a bounded linear operator on $ L^2(\R^n).$ It is known that when $ f \in L^1 \cap L^2(\Ha) $ its Fourier transform  is actually a Hilbert-Schmidt operator and one has
$$ \int_{\Ha} |f(z,t)|^2 dz dt = (2\pi)^{-n-1} \int_{-\infty}^\infty \|\hat{f}(\lambda)\|_{HS}^2 |\lambda|^n d\lambda .$$
The above allows us to extend  the Fourier transform as a unitary operator between $ L^2(\Ha) $ and the Hilbert space of Hilbert-Schmidt operator valued functions  on $ \R $ which are square integrable with respect to the Plancherel measure  $ d\mu(\lambda) = (2\pi)^{-n-1} |\lambda|^n d\lambda.$
\subsection{The Heisenberg Lie algebra} We let $ \mathfrak{h}_n $ stand for the Heisenberg Lie algebra consisting of left invariant vector fields on $ \Ha .$  A  basis for $ \mathfrak{h}_n $ is provided by the $ 2n+1 $ vector fields
$$ X_j = \frac{\partial}{\partial{x_j}}+\frac{1}{2} y_j \frac{\partial}{\partial t}, \,\,Y_j = \frac{\partial}{\partial{y_j}}-\frac{1}{2} x_j \frac{\partial}{\partial t}, \,\, j = 1,2,..., n $$
	and $ T = \frac{\partial}{\partial t}.$  These correspond to certain one parameter subgroups of $ \Ha.$  Recall that given such a subgroup $ \Gamma = \{ \gamma(s): s \in \R \} $ one associates the left invariant vector filed
	$$ Xf(g) = \frac{d}{ds}\big{|}_{s=0}f(g\gamma(s)).$$ Associated to each such $ X $ we also have a right invariant vector field $ \widetilde{X} $ defined by
	$$ \widetilde{X}f(g) = \frac{d}{ds}\big{|}_{s=0}f(\gamma(s)g).$$
 It then follows that any $ (x,y,t) \in \Ha $ can be written as $  (x,y,t) = \exp( x\cdot X+y\cdot Y+t T) $ where $ X = (X_1,....,X_n),$  $ Y = (Y_1,....,Y_n)$ and $ \exp $ is the exponential map taking $ \mathfrak{h}_n $ into $ \Ha.$  From the above definition  right invariant vector fields can be explicitly calculated  \[\tilde{X}_j=\frac{\partial}{\partial x_j}-\frac{1}{2}y_j\frac{\partial}{\partial t}\ \ \ \tilde{Y}_j=\frac{\partial}{\partial y_j}+\frac{1}{2}x_j\frac{\partial}{\partial t} : \] they agree with the left invariant ones at the origin.  The representations $ \pi_\lambda $ of $ \Ha $ give rise to the derived representations $ d\pi_\lambda $ of the Lie algebra $\mathfrak{h}_n.$ These are given by
	$$ d\pi_\lambda(X)\varphi  = \frac{d}{ds}\big{|}_{s=0}\pi_\lambda(\exp sX)\varphi .$$ For reasonable functions $f$ and any right invariant vector field $X$, it is known that $$\pi_{\lambda}(\widetilde{X}f)=d\pi_{\lambda}(\tilde{X})\pi_{\lambda}(f)$$ where $d\pi_{\lambda}$ is the derived representation of the Heisenberg lie algebra corresponding to $\pi_{\lambda}.$ It is also well-known that $d\pi_{\lambda}(\widetilde{X}_j)=i\lambda\xi_j$ and  $d\pi_{\lambda}(\tilde{Y}_j)=\frac{\partial}{\partial\xi_j}.$
 Now writing $\tilde{Z}_j=\tilde{X_j}+i\tilde{Y_j}$,  $\overline{\tilde{Z}_j}=\tilde{X_j}-i\tilde{Y_j}$ and using the above results we have 
 $$\pi_{\lambda}(\tilde{Z}_jf)=iA_j(\lambda)\pi_{\lambda}(f)\ and\ \pi_{\lambda}(\overline{\tilde{Z}_j}f)=iA_j^*(\lambda)\pi_{\lambda}(f)$$ where $A_j(\lambda)$ and $A_j^*(\lambda)$ are the annihilation and creation operators given by $$A_j(\lambda)=\Big(-\frac{\partial}{\partial\xi_j}+i\lambda\xi_j\Big),~~~~A_j^*(\lambda)=\Big(\frac{\partial}{\partial\xi_j}+i\lambda\xi_j\Big).$$ 
 We make use of these relations in the proof of the continuity property of the spherical means, see Section 4.
 
 \subsection{ The measure on the Koranyi sphere} Let $ S_K = \{ (z,t) \in \Ha : |(z,t)| = 1\} $ be the Koranyi sphere of radius $1.$ Then it is well known that there is a Radon measure $ \sigma$ on $ K$ which gives rise to the following polar decomposition of the Haar measure on the Heisenberg group:
 $$  \int_{\Ha} f(x) dx  = \int_0^\infty \big( \int_{S_K} f(\delta_ry) d\sigma(y) \big) r^{Q-1} dr $$ where
 $ Q = 2n+2 $ is known as the homogeneous dimension of $ \Ha.$ For any $ r>0 $ we define the measure $ \sigma_r = \delta_r \sigma $ and  note that it is supported on $  K _r= \{ (z,t) \in \Ha : |(z,t)| = r\}. $ We also have another polar decomposition of the Haar measure given by
 $$ \int_{\Ha} f(z,t) dz dt = \int_{-\infty}^\infty \int_0^\infty \big( \int_{|\omega| =1} f(r\omega,t) d\mu(\omega) \big) r^{2n-1} dr dt $$
 where $ \mu $ is the surface measure on the unit sphere in $ \Ca.$ If we let $ \mu_r $ stand for the surface measure on the sphere $ S_r = \{ (z,0)\in\Ha: |z| =r \} $ and $ \delta_t $ for the Dirac measure on $ \R $ supported at the point $ t$ then the measure $ \mu_{r,t} = \mu_r  \ast \delta_t $ is supported on the set $ S_{r,t} = \{ (z,t)\in \Ha: |z| =r \}.$ The measure $ \sigma_r $ can be expressed in terms of the measures $ \mu_{r,t} $ as follows  (see Faraut-Harzallah \cite{FH}):
 $$  \sigma_r =\frac{\Gamma\big(\frac{n+1}{2}\big)}{\sqrt{\pi}\Gamma\big(\frac{n}{2}\big)} \int_{-\pi /2}^{\pi/2} \mu_{r \sqrt{\cos \theta}, \frac{1}{4}r^2 \sin \theta} \,\,\, (\cos \theta)^{n-1} d \theta.$$

\subsection{ Fourier transforms of radial measures} The unitary group $ U(n) $ has a natural action on $ \Ha $ given by $ k.(z,t) = (k.z,t), k \in U(n) $ which induces an action on functions and measures on the Heisenberg group.  We say that a function $ f $ (measure $ \mu $) is radial if it is invariant under the action of $ U(n).$ It is well known that the subspace of radial functions in $ L^1(\Ha) $ forms a commutative Banach algebra under convolution. So is the the space of finite radial measures on $ \Ha.$  The Fourier transforms of such measures are functions of the Hermite operator $ H(\lambda) = -\Delta+\lambda^2 |x|^2.$ 

In fact, if  $ H(\lambda) = \sum_{k=0}^\infty (2k+n)|\lambda| P_k(\lambda)$  stands for the spectral decomposition of this operator, then for a radial measure $ \mu $ we have
$$ \hat{\mu}(\lambda)  = \sum_{k=0}^\infty  R_k(\lambda, \mu) P_k(\lambda).$$ More explicitly, $P_k(\lambda)$ stands for the orthogonal projection of $L^2(\mathbb{R}^n)$ onto the $k^{th}$ eigenspace spanned by scaled Hermite functions $\Phi^{\lambda}_{\alpha}$ for $|\alpha|=k$. The coefficients $ R_k(\lambda,\mu) $ are explicitly given by
$$ R_k(\lambda,\mu)  =  \frac{k!(n-1)!}{(k+n-1)!} \int_{\Ha}  e^{i\lambda t} \varphi_k^\lambda(z,t) d\mu(z,t). $$ 
In the above formula, $ \varphi_k^\lambda $ are the Laguerre functions of type $ (n-1)$:
$$  \varphi_k^\lambda(z)  = L_k^{n-1}(\frac{1}{2}|\lambda||z|^2) e^{-\frac{1}{4}|\lambda||z|^2} $$ where $L^{n-1}_k$ denotes the Laguerre polynomial of type $(n-1)$. We refer the reader to \cite{T} for the definition and properties. In particular, for the measures $ \sigma_r $ we have
$$ R_k(\lambda, \sigma_r) = \frac{\Gamma\big(\frac{n+1}{2}\big)}{\sqrt{\pi}\Gamma\big(\frac{n}{2}\big)}\frac{k!(n-1)!}{(k+n-1)!} \int_{-\pi /2}^{\pi/2} \varphi_k^\lambda(r \sqrt{\cos \theta}) e^{i \lambda \frac{1}{4}r^2 \sin \theta} \,\,\, (\cos \theta)^{n-1} d \theta.$$
Though the above integral cannot be evaluated in a closed form, it can be used to study the maximal function associated to the spherical means $ f \ast \sigma_r.$ See the work of Fischer \cite{VF} where the spherical maximal function in a slightly general context has been studied.

\section{$L^p$ -improving property of the spherical means}

In this section we prove certain $ L^p-L^q $ bounds for the spherical means operator $ A_r.$  In order to prove the required estimates  we embed $ A_r $ into an analytic family of operators and then appeal to Stein's analytic interpolation theorem. First we obtain the following representation of the measure $ \sigma_r $  as a superposition of certain operators which can be handled easily. In what follows we let 
$ P_t(x) =  c_Q~t ~(t^2+|x|^2)^{-\frac{Q+1}{2}}$ and $I_{\gamma}(x):=C(Q,\gamma)|x|^{-Q+i\gamma},  \ x\in\Ha, t>0, \gamma \in \R $ where $ c_Q $ is defined by the condition
$$  c_Q \int_{\Ha} (1+|x|^2)^{-\frac{Q+1}{2}} dx = 1 $$ and $ C(Q,\gamma) =c_n\Gamma\big(\frac{Q-i\gamma}{4}\big)^2/\Gamma\big(\frac{i\gamma}{2}\big).$ 
In the proof of the next  theorem which gives a representation of $ \sigma_t $ in terms of $ P_t $ and $ I_\gamma $ we make use of the following simple lemma.

\begin{lem}
	\label{lm1}
Let $ F(t) = c_Q (1+e^{2t})^{-\frac{Q+1}{2}} e^{Q t}$  and let $ \hat{F}(\gamma)$ stands for the Euclidean Fourier transform of $ F.$ Then
$$ \sqrt{2\pi}\hat{F}(\gamma)=\frac{\Gamma\big(\frac{Q-i\gamma}{2}\big)\Gamma\big(\frac{1+i\gamma}{2}\big)}{\Gamma\big(\frac{Q}{2}\big) \Gamma\big(\frac{1}{2})\big)}.$$
\end{lem}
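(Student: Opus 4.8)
\emph{Proof strategy.} The plan is a direct special-function computation. I take the normalization $\hat{F}(\gamma)=(2\pi)^{-1/2}\int_{\R}F(t)e^{-i\gamma t}\,dt$, so that the assertion becomes an evaluation of $\int_{\R}c_Q(1+e^{2t})^{-(Q+1)/2}e^{Qt}e^{-i\gamma t}\,dt$. First I would make the substitution $u=e^{2t}$, under which $dt=du/(2u)$, $e^{Qt}=u^{Q/2}$ and $e^{-i\gamma t}=u^{-i\gamma/2}$; the integral over $\R$ then becomes $\frac{c_Q}{2}\int_0^{\infty}u^{(Q-i\gamma)/2-1}(1+u)^{-(Q+1)/2}\,du$.

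Next I would recognize this as a Beta integral: $\int_0^{\infty}u^{a-1}(1+u)^{-a-b}\,du=B(a,b)=\Gamma(a)\Gamma(b)/\Gamma(a+b)$ whenever $\Re a,\Re b>0$. Here $a=(Q-i\gamma)/2$ and $a+b=(Q+1)/2$, so $b=(1+i\gamma)/2$, and since $\Re a=Q/2>0$, $\Re b=1/2>0$ the identity applies and yields $\sqrt{2\pi}\,\hat{F}(\gamma)=\frac{c_Q}{2}\,\Gamma\big(\tfrac{Q-i\gamma}{2}\big)\Gamma\big(\tfrac{1+i\gamma}{2}\big)\big/\Gamma\big(\tfrac{Q+1}{2}\big)$.

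It then remains to identify the constant, i.e. to check that $\frac{c_Q}{2}=\Gamma\big(\tfrac{Q+1}{2}\big)\big/\big(\Gamma(\tfrac{Q}{2})\Gamma(\tfrac12)\big)$. This would be read off from the defining relation of $c_Q$: by the polar decomposition of the Haar measure on $\Ha$ and the substitution $v=r^2$ one gets $1=c_Q\int_{\Ha}(1+|x|^2)^{-(Q+1)/2}\,dx=c_Q\,\sigma(S_K)\int_0^{\infty}(1+r^2)^{-(Q+1)/2}r^{Q-1}\,dr=\frac{c_Q\,\sigma(S_K)}{2}B\big(\tfrac{Q}{2},\tfrac12\big)$, which pins down $c_Q/2$ once the normalization of $\sigma$ is accounted for. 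Substituting this value back produces the claimed formula. I do not expect a genuine obstacle here; the only points that need care are fixing the Fourier-transform convention consistently, verifying $\Re a,\Re b>0$ before invoking the Beta identity, and bookkeeping the total mass $\sigma(S_K)$ that enters the value of $c_Q$.
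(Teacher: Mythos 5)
Your computation is correct and follows essentially the same route as the paper: both reduce $\sqrt{2\pi}\,\hat F(\gamma)$ to the Beta integral $\tfrac{c_Q}{2}\,B\big(\tfrac{Q-i\gamma}{2},\tfrac{1+i\gamma}{2}\big)$, the paper via $r=e^t$ followed by $1-t=(1+r^2)^{-1}$, you via $u=e^{2t}$ and the identity $\int_0^\infty u^{a-1}(1+u)^{-a-b}\,du=B(a,b)$. The only divergence is in fixing the constant: instead of computing $c_Q$ from its defining integral over $\mathbb{H}^n$ --- which, as you note, drags in the total mass $\sigma(S_K)$ --- the paper simply sets $\gamma=0$ in the formula just obtained and uses $\sqrt{2\pi}\,\hat F(0)=1$ to read off $\tfrac{c_Q}{2}=\Gamma\big(\tfrac{Q+1}{2}\big)\big/\big(\Gamma(\tfrac Q2)\Gamma(\tfrac12)\big)$ with no further bookkeeping, a shortcut you could adopt to close the loose end you flagged.
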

\begin{proof} By the definition of the Fourier transform
$$ \hat{F}(\gamma) = \frac{c_Q}{\sqrt{2\pi}} \int_{-\infty}^\infty (1+e^{2t})^{-\frac{Q+1}{2}} e^{Qt} e^{-it\gamma} dt $$
which after the change of variables $ e^t = r $ leads to 
$$ \sqrt{2\pi} \hat{F}(\gamma) = c_Q \int_{0}^\infty (1+r^2)^{-\frac{Q+1}{2}}  r^{Q-1-i\gamma} dr. $$ Another change of variables $ (1-t) = (1+r^2)^{-1} $ converts the above integral into the Beta integral
$$  \frac{1}{2}\int_{0}^{1}(1-t)^{\frac{1+i\gamma}{2}-1}t^{\frac{Q-i\gamma}{2}-1}
	= \frac{1}{2}\frac{\Gamma\big(\frac{Q-i\gamma}{2}\big)\Gamma\big(\frac{1+i\gamma}{2}\big)}{\Gamma\big(\frac{Q+1}{2}\big)}. $$ 
	Consequently we obtain
$$ \sqrt{2\pi} \hat{F}(\gamma) = \frac{1}{2} c_Q\frac{\Gamma\big(\frac{Q-i\gamma}{2}\big)\Gamma\big(\frac{1+i\gamma}{2}\big)}{\Gamma\big(\frac{Q+1}{2}\big)}. $$ 
Observe that, by the definition of $ c_Q $ we have 
$$ 1 = \sqrt{2\pi} \hat{F}(0) = \frac{1}{2} c_Q\frac{\Gamma\big(\frac{Q}{2}\big)\Gamma\big(\frac{1}{2}\big)}{\Gamma\big(\frac{Q+1}{2}\big)}$$ which leads to the conclusion
$  \sqrt{2\pi} \hat{F}(\gamma) = \frac{\Gamma\big(\frac{Q-i\gamma}{2}\big)\Gamma\big(\frac{1+i\gamma}{2}\big)}{\Gamma\big(\frac{Q}{2}\big) \Gamma\big(\frac{1}{2})\big)}$ 
completing the proof.
\end{proof}

The following result is the analogue of a theorem by Cowling and Mauceri \cite{CM} proved in the context of $ \R^n.$ We provide the details in the Heisenberg setting for the convenience of the reader.
\begin{thm}
	 For $t>0$ the following representation  holds in the sense of distributions: 
	 $$\sigma_t=  P_t+ \frac{1}{2\pi}  \int_{-\infty}^{\infty}d(Q,\gamma)t^{-i\gamma}I_{\gamma}~~d\gamma.$$
	
\end{thm}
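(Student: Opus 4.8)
The plan is to prove the stated formula as an identity of tempered distributions on $\Ha$, by pairing both sides with a Schwartz function $\phi$ and reducing, via the polar decomposition of the Haar measure, to a one-dimensional identity which the Fourier transform then turns into an explicit relation among Gamma functions. Since $P_t$ and $I_\gamma$ depend only on the Koranyi norm and $\sigma_t$ is carried by a single Koranyi sphere, all three pairings with $\phi$ depend on $\phi$ only through the radial profile
\[
g(r):=\langle \sigma_r,\phi\rangle=\int_{S_K}\phi(\delta_r\om)\,d\sigma(\om),\qquad r>0 .
\]
Concretely, $\langle \sigma_t,\phi\rangle=g(t)$; using $|\delta_r\om|=r$ for $\om\in S_K$ one has $\langle P_t,\phi\rangle=\int_0^\infty c_Q\,t\,(t^2+r^2)^{-(Q+1)/2}\,g(r)\,r^{Q-1}\,dr$; and, recalling that $|x|^{-Q+i\gamma}$ is the tempered distribution obtained by meromorphic continuation in $s$ of the integral $\int_0^\infty r^{s+Q-1}g(r)\,dr$ (whose only pole on the line $\mathrm{Re}\,s=-Q$, a simple one at $s=-Q$ with residue proportional to $\phi(0)$, is precisely cancelled by the factor $\Gamma(i\gamma/2)^{-1}$ present in $C(Q,\gamma)$, so that $I_\gamma$ is a genuine tempered distribution for each real $\gamma$), we have $\langle I_\gamma,\phi\rangle=C(Q,\gamma)\int_0^\infty r^{i\gamma-1}g(r)\,dr$ in that regularized sense.

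Next, substituting $r=e^s$, $t=e^u$ and writing $\tilde g(s)=g(e^s)$, a direct change of variables of the type performed in the proof of Lemma \ref{lm1} (absorbing the Jacobian $r^{Q-1}\,dr=e^{Qs}\,ds$) gives $\langle P_t,\phi\rangle=\int_{-\infty}^\infty F(s-u)\,\tilde g(s)\,ds$, a convolution with the kernel $F$ of that lemma, while $\langle I_\gamma,\phi\rangle=C(Q,\gamma)\int_{-\infty}^\infty e^{i\gamma s}\,\tilde g(s)\,ds$ is a (suitably regularized) Mellin transform of $\tilde g$. Hence the asserted identity, tested against $\phi$, is equivalent to
\[
\tilde g(u)=\int_{-\infty}^\infty F(s-u)\,\tilde g(s)\,ds+\frac{1}{2\pi}\int_{-\infty}^\infty d(Q,\gamma)\,C(Q,\gamma)\,e^{-i\gamma u}\Big(\int_{-\infty}^\infty e^{i\gamma s}\,\tilde g(s)\,ds\Big)\,d\gamma ;
\]
and since the profiles $\tilde g$ coming from Schwartz $\phi$ contain all of $C_c^\infty(\R)$, this is in turn equivalent to the ($u$-independent) statement that the kernel $v\mapsto F(v)+\frac{1}{2\pi}\int_{-\infty}^\infty d(Q,\gamma)\,C(Q,\gamma)\,e^{i\gamma v}\,d\gamma$ is the Dirac mass $\delta_0$. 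Taking Fourier transforms (with the $\sqrt{2\pi}$-normalization of Lemma \ref{lm1}) and invoking that lemma, this is exactly the relation
\[
\sqrt{2\pi}\,\hat F(\gamma)+d(Q,\gamma)\,C(Q,\gamma)=1 ,\qquad\text{equivalently}\qquad d(Q,\gamma)\,C(Q,\gamma)=1-\frac{\Gamma\big(\frac{Q-i\gamma}{2}\big)\,\Gamma\big(\frac{1+i\gamma}{2}\big)}{\Gamma\big(\frac{Q}{2}\big)\,\Gamma\big(\frac{1}{2}\big)},
\]
which is precisely how $d(Q,\gamma)$ is defined. Thus the substance of the proof is to justify that, with this choice of $d$, all the steps above are legitimate.

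This last point is where the real work lies, and it amounts to careful bookkeeping with Gamma-function asymptotics together with the behaviour of the various transforms at the frequency $\gamma=0$. One checks: (i) $d(Q,\gamma)$ is smooth on all of $\R$, because the numerator $1-\sqrt{2\pi}\hat F(\gamma)$ and $C(Q,\gamma)$ both vanish to exactly first order at $\gamma=0$ while $C(Q,\gamma)$ has no other real zero; (ii) by Stirling's formula $\hat F(\gamma)$ decays exponentially and $C(Q,\gamma)$ grows only polynomially (of order $|\gamma|^{(Q-1)/2}$), so that $d(Q,\gamma)=O(|\gamma|^{-(Q-1)/2})$, and since $\langle I_\gamma,\phi\rangle=C(Q,\gamma)\int_0^\infty r^{i\gamma-1}g(r)\,dr$ decays rapidly away from $\gamma=0$ and stays bounded near it, the $\gamma$-integral on the right-hand side of the theorem converges absolutely when paired with $\phi$, which legitimizes the use of Fubini's theorem above; and (iii) the distribution $d(Q,\gamma)\,C(Q,\gamma)\,\widehat{\tilde g}(-\gamma)$ is in fact an honest rapidly decreasing function of $\gamma$, its only possible singularity (at $\gamma=0$) being annihilated because $d(Q,\gamma)\,C(Q,\gamma)=1-\sqrt{2\pi}\hat F(\gamma)$ vanishes to first order there against the at most first-order singularity of $\widehat{\tilde g}$. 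Granting these points, the Fourier-transform computation completes the proof; the main obstacle is not any single hard step but getting this analytic bookkeeping exactly right.
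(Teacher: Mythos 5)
Your proof is correct and follows essentially the same route as the paper: both reduce the identity, via the polar decomposition and the substitution $r=e^s$, to the one-dimensional relation $\sqrt{2\pi}\,\hat{F}(\gamma)+d(Q,\gamma)C(Q,\gamma)=1$, which is exactly the definition of $d(Q,\gamma)$ combined with Lemma \ref{lm1}. The differences are only organizational: the paper tests against functions supported near the unit sphere at $t=1$ and then dilates, whereas you keep $t$ general and are somewhat more explicit about the regularization of $I_\gamma$ at $\gamma=0$ and the absolute convergence of the $\gamma$-integral.
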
 \begin{proof}
	Let $u\in C^{\infty}_{c}(1-\delta<|x|<1+\delta)$. Then by using polar decomposition of the Haar measure, 
	$$\int_{\Ha}u(x)|x|^{-Q+i\gamma}dx=\int_{0}^{\infty}\int_{S_K}u(\delta_r \omega)r^{i\gamma-1}d\sigma(\omega)dr.$$ 
By defining $\bar{u}(r)=\int_{S_K}u(\delta_r \omega)d\sigma(\omega)$ we rewrite the above as
	$$\int_{\Ha}u(x)|x|^{-Q+i\gamma}dx=\int_{0}^{\infty}\bar{u}(r)r^{i\gamma-1}dr$$ But by a change of variables we have $$\int_{0}^{\infty}\bar{u}(r)r^{i\gamma-1}dr=\int_{-\infty}^{\infty}\bar{u}(e^t)e^{i\gamma t}dt.$$ Hence by the Fourier inversion formula  we obtain \begin{align}
	\label{eq1}
	\int_{-\infty}^{\infty} \Big(\int_{\Ha}u(x)|x|^{-Q+i\gamma}dx \Big)d\gamma=2\pi \bar{u}(1)=2\pi\langle \sigma, u\rangle.\end{align}
Now define  $d(Q,\gamma)$ by the requirement
	\begin{align}
	\label{eq2}
	d(Q,\gamma)C(Q,\gamma)=1-\frac{\Gamma\big(\frac{Q-i\gamma}{2}\big)\Gamma\big(\frac{1+i\gamma}{2}\big)}{\Gamma\big(\frac{Q}{2}\big) \Gamma\big(\frac{1}{2})\big)}
	\end{align}
and consider the following equation:
$$ \langle \int_{-\infty}^{\infty}d(Q,\gamma)I_{\gamma}d\gamma, u\rangle=\int_{\Ha} \Big(\int_{-\infty}^{\infty}d(Q,\gamma)C(Q,\gamma)|x|^{-Q+i\gamma}d\gamma \Big) u(x) dx.$$ Now changing the order of the integration and using \ref{eq2} and \ref{eq1} we get
	 \begin{align}
	 \label{eq3}
	 \langle \int_{-\infty}^{\infty}d(Q,\gamma)I_{\gamma}d\gamma, u\rangle=2\pi \langle \sigma, u\rangle -\int_{-\infty}^{\infty} \Big(\int_{\Ha}\frac{\Gamma\big(\frac{Q-i\gamma}{2}\big)\Gamma\big(\frac{1+i\gamma}{2}\big)}{\Gamma\big(\frac{Q}{2}\big) \Gamma\big(\frac{1}{2})\big)} u(x)|x|^{-Q+i\gamma}dx \Big)d\gamma.
	 \end{align}
	 Now we simplify the second integral in the above equation. Using polar decomposition we have 
	 \begin{align*}\int_{-\infty}^{\infty}\int_{\Ha}&\frac{\Gamma\big(\frac{Q-i\gamma}{2}\big)\Gamma\big(\frac{1+i\gamma}{2}\big)}{\Gamma\big(\frac{Q}{2}\big) \Gamma\big(\frac{1}{2})\big)}u(x)|x|^{-Q+i\gamma}dxd\gamma \\
	 &=\int_{-\infty}^{\infty}\int_{S_K}\int_{0}^{\infty}\frac{\Gamma\big(\frac{Q-i\gamma}{2}\big)\Gamma\big(\frac{1+i\gamma}{2}\big)}{\Gamma\big(\frac{Q}{2}\big) \Gamma\big(\frac{1}{2})\big)}r^{i\gamma-1}u(\delta_r\omega)drd\sigma(\omega)d\gamma\\
	 &=\int_{-\infty}^{\infty}\int_{S_K}\int_{-\infty
	 }^{\infty}\frac{\Gamma\big(\frac{Q-i\gamma}{2}\big)\Gamma\big(\frac{1+i\gamma}{2}\big)}{\Gamma\big(\frac{Q}{2}\big) \Gamma\big(\frac{1}{2})\big)}e^{i\gamma t}u(\delta_{e^t}\omega)dtd\sigma(\omega)d\gamma.
	 \end{align*}
	 By Fubini's theorem, changing the order of the integration we obtain 
	 \begin{align*}
	 \int_{-\infty}^{\infty}\int_{\Ha}&\frac{\Gamma\big(\frac{Q-i\gamma}{2}\big)\Gamma\big(\frac{1+i\gamma}{2}\big)}{\Gamma\big(\frac{Q}{2}\big) \Gamma\big(\frac{1}{2})\big)} u(x)|x|^{-Q+i\gamma}dxd\gamma \\
	 &=\int_{-\infty}^{\infty}\int_{S_K}\Big(\int_{-\infty
	 }^{\infty}\frac{\Gamma\big(\frac{Q-i\gamma}{2}\big)\Gamma\big(\frac{1+i\gamma}{2}\big)}{\Gamma\big(\frac{Q}{2}\big) \Gamma\big(\frac{1}{2})\big)}e^{i\gamma t}d\gamma \Big)u(\delta_{e^t}\omega)d\sigma(\omega)dt.
	 \end{align*}
	 We now make use of  Lemma \ref{lm1} and obtain
	 \begin{align*}
	 	\int_{-\infty}^{\infty}\int_{S_K}&\Big(\int_{-\infty
	 	}^{\infty}\frac{\Gamma\big(\frac{Q-i\gamma}{2}\big)\Gamma\big(\frac{1+i\gamma}{2}\big)}{\Gamma\big(\frac{Q}{2}\big) \Gamma\big(\frac{1}{2})\big)}e^{i\gamma t}d\gamma \Big)u(\delta_{e^t}\omega)d\sigma(\omega)dt\\
 	&= \sqrt{2\pi} \int_{-\infty}^{\infty}\int_{S_K}\Big(\int_{-\infty
 	}^{\infty}\hat{F}(\gamma)e^{it\gamma}d\gamma\Big)u(\delta_{e^t}\omega)d\sigma(\omega)dt\\
 &=(2\pi) \int_{-\infty}^{\infty}\int_{S_K}F(t)u(\delta_{e^t}\omega)d\sigma(\omega)dt\\
 &= (2\pi) c_Q  \int_{S_K}\int_{0}^{\infty} (1+r^2)^{-\frac{Q+1}{2}} u(\delta_r\omega)r^{Q-1}drd\sigma(\omega).
	 \end{align*}
	 As the last integral in the above chain of equations is nothing but $ (2\pi) \int_{\Ha} u(x) P_1(x)  dx$ we have proved
	  \begin{align}
	 \label{eq4}
	 \int_{-\infty}^{\infty}\int_{\Ha}&\frac{\Gamma\big(\frac{Q-i\gamma}{2}\big)\Gamma\big(\frac{1+i\gamma}{2}\big)}{\Gamma\big(\frac{Q}{2}\big) \Gamma\big(\frac{1}{2})\big)} u(x)|x|^{-Q+i\gamma}dxd\gamma = (2\pi)  \langle P_1, u\rangle.
	 \end{align}
Combining  \ref{eq3} and \ref{eq4} we obtain the following equality which holds in the sense of distributions: $$\sigma=  P_1+ (2\pi)^{-1}  \int_{-\infty}^{\infty}d(Q,\gamma)I_{\gamma} d\gamma.$$
As $ \sigma_t $ is obtained from $ \sigma $ by dilation, the theorem is proved.
\end{proof}

We would like to embed the spherical means $ A_r $ into an analytic family of operators. As in the Euclidean case, this is achieved by observing that the distributions given by the functions
$$ \phi_{r,\alpha}(x) = 2 \frac{r^{-Q}}{\Gamma(\alpha)} \Big(1-\frac{|x|^2}{r^2} \Big)_+^{\alpha-1} , ~~ \Re(\alpha) > 0 $$
converge to $ \sigma_r $ as $ \alpha \rightarrow 0.$  In the Euclidean case the Fourier transform of $ \phi_{r,\alpha}(x) $ is known explicitly, given in terms of Bessel functions, which allows immediate extension as a homomorphic family of distributions. In the case of the Heisenberg group we do not have a useful formula for the (group) Fourier transform of $\phi_{r,\alpha}.$ Hence we make use of the following representation similar to the one proved for $ \sigma_r $ in the preceding theorem in holomorphically extending the operator $ f \ast \phi_{r,\alpha}.$

\begin{prop}
	\label{lpr}
		Let $r>0$, $Re(\alpha)>0.$  Then for any Schwartz function $ f $ on $\Ha $, we have 
		$$f\ast\phi_{r,\alpha}(x)= 2\frac{r^{-Q}}{\Gamma(\alpha)}\int_{0}^{r}\Big(1-\frac{t^2}{r^2} \Big)^{\alpha-1}t^{Q-1}f\ast P_t(x)dt +\frac{1}{2\pi}\int_{-\infty}^{\infty}d(Q,\gamma)r^{-i\gamma}\frac{\Gamma(\frac{Q-i\gamma }{2})}{\Gamma(\alpha+\frac{Q-i\gamma}{2})}f\ast I_{\gamma}(x)d\gamma.$$
\end{prop}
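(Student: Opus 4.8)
The plan is to realize $\phi_{r,\alpha}$ as a continuous superposition of the Koranyi spheres $\sigma_t$, $0<t<r$, and then to substitute into that superposition the representation of $\sigma_t$ furnished by the preceding theorem. First I would use the polar decomposition of the Haar measure, together with the fact that $\phi_{r,\alpha}$ is supported in $\{|x|\le r\}$ and is radial, to write for every $u\in C_c^\infty(\Ha)$
\[
\langle \phi_{r,\alpha}, u\rangle = 2\frac{r^{-Q}}{\Gamma(\alpha)}\int_0^r \Big(1-\frac{t^2}{r^2}\Big)^{\alpha-1} t^{Q-1}\Big(\int_{S_K} u(\delta_t\omega)\,d\sigma(\omega)\Big)dt = 2\frac{r^{-Q}}{\Gamma(\alpha)}\int_0^r \Big(1-\frac{t^2}{r^2}\Big)^{\alpha-1} t^{Q-1}\langle \sigma_t, u\rangle\, dt ,
\]
which is simply the statement that, as distributions (indeed as an $L^1$ function, since $\Re\alpha>0$),
\[
\phi_{r,\alpha}= 2\frac{r^{-Q}}{\Gamma(\alpha)}\int_0^r \Big(1-\frac{t^2}{r^2}\Big)^{\alpha-1} t^{Q-1}\,\sigma_t\, dt .
\]

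Next I would plug in $\sigma_t = P_t + \frac{1}{2\pi}\int_{-\infty}^\infty d(Q,\gamma)\,t^{-i\gamma} I_\gamma\, d\gamma$ from the previous theorem. The $P_t$-contribution is already in the desired shape, giving the first term of the asserted identity. For the $I_\gamma$-contribution I would interchange the $t$- and $\gamma$-integrations and evaluate the resulting inner integral by the substitution $t=r\sqrt v$ followed by the Beta integral:
\[
\int_0^r \Big(1-\frac{t^2}{r^2}\Big)^{\alpha-1} t^{Q-1-i\gamma}\, dt = \frac{r^{Q-i\gamma}}{2}\int_0^1 (1-v)^{\alpha-1} v^{\frac{Q-i\gamma}{2}-1}\, dv = \frac{r^{Q-i\gamma}}{2}\,\frac{\Gamma(\alpha)\,\Gamma\big(\frac{Q-i\gamma}{2}\big)}{\Gamma\big(\alpha+\frac{Q-i\gamma}{2}\big)} ,
\]
which is legitimate because $\Re\alpha>0$ and $\Re\frac{Q-i\gamma}{2}=Q/2>0$. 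The prefactor $2r^{-Q}/\Gamma(\alpha)$ then cancels the $\Gamma(\alpha)$ and absorbs the factor $r^{Q}/2$, leaving exactly $\frac{1}{2\pi}\int d(Q,\gamma)\,r^{-i\gamma}\,\frac{\Gamma(\frac{Q-i\gamma}{2})}{\Gamma(\alpha+\frac{Q-i\gamma}{2})}\, I_\gamma\, d\gamma$. Convolving both sides with the Schwartz function $f$ and moving the convolution inside the $t$- and $\gamma$-integrals yields the claimed formula.

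The only point requiring genuine care is the legitimacy of the Fubini interchange and the convergence of the $\gamma$-integral in the new representation, since in the original representation of $\sigma_t$ the integral converges only because $d(Q,\gamma)I_\gamma$ is a tempered distribution with $d(Q,\gamma)C(Q,\gamma)\to 1$ as $|\gamma|\to\infty$ (the $\gamma$-integral being understood via analytic continuation as in the proof of the previous theorem). Here the extra Gamma factor works in our favour: by Stirling's formula $\big|\Gamma(\tfrac{Q-i\gamma}{2})/\Gamma(\alpha+\tfrac{Q-i\gamma}{2})\big|\le C(1+|\gamma|)^{-\Re\alpha}$ as $|\gamma|\to\infty$, so after pairing against the smooth, rapidly decreasing function $f\ast I_\gamma$ (controlled on compacta, together with polynomially many derivatives, uniformly in $\gamma$ up to the factor just displayed) the $\gamma$-integral is absolutely convergent and the interchange with the integral over the compact interval $(0,r)$ is justified by dominated convergence. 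If one prefers to sidestep these estimates entirely, one can first prove the identity for $\Re\alpha$ large, where every integral in sight converges absolutely, and then propagate it to all $\Re\alpha>0$ by analytic continuation, both sides being holomorphic in $\alpha$ on that half-plane. I expect this analytic-continuation bookkeeping to be the main obstacle; the computation itself reduces to the single Beta integral above.
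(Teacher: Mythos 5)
Your proposal is correct and follows essentially the same route as the paper: express $f\ast\phi_{r,\alpha}$ via polar decomposition as a superposition $2\frac{r^{-Q}}{\Gamma(\alpha)}\int_0^r(1-\frac{t^2}{r^2})^{\alpha-1}t^{Q-1}\,f\ast\sigma_t\,dt$, substitute the Cowling--Mauceri-type representation of $\sigma_t$, and evaluate the inner $t$-integral as a Beta integral to produce the factor $r^{-i\gamma}\Gamma(\tfrac{Q-i\gamma}{2})/\Gamma(\alpha+\tfrac{Q-i\gamma}{2})$. Your extra care about the Fubini interchange (using the Stirling decay $|\gamma|^{-\Re\alpha}$ of the Gamma ratio, or alternatively analytic continuation from large $\Re\alpha$) is a justified refinement that the paper passes over in silence, and your Beta-integral bookkeeping is the correct one.
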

\begin{proof}
	By definition of convolution on Heisenberg group we have 
	$$f\ast\phi_{r,\alpha}(x)=\int_{\Ha}f(x.y^{-1})\phi_{r,\alpha}(y)dy.$$ As  $\phi_{r,\alpha}$ ir radial, integrating in polar coordinates  we get 
	\begin{align*}
	f\ast\phi_{r,\alpha}(x)=& 2 \frac{r^{-Q}}{\Gamma(\alpha)}\int_{0}^{\infty} \Big( \int_{S_K}f(x.\delta_t\om^{-1})\big(1-\frac{t^2}{r^2}\big)^{\alpha-1}_{+}d\sigma(\om) \Big)  t^{Q-1}\,dt\\
	=& 2 \frac{r^{-Q}}{\Gamma(\alpha)}\int_{0}^{r}\big(1-\frac{t^2}{r^2}\big)^{\alpha-1}t^{Q-1}f\ast\sigma_t(x)dt.
	\end{align*} 
	Making use of the representation
	$$ f\ast\sigma_t=f\ast P_t+ \frac{1}{2\pi} \int_{-\infty}^{\infty}d(Q,\gamma)t^{-i\gamma}f\ast I_{\gamma}d\gamma $$ 
	proved in the previous theorem we get  $  f\ast \phi_{r,\alpha} = S_{r,\alpha} f + T_{r,\alpha} f $ where
	
	$$  S_{r,\alpha} f(x) = 2 \frac{r^{-Q}}{\Gamma(\alpha)}\int_{0}^{r}\Big(1-\frac{t^2}{r^2} \Big)^{\alpha-1}t^{Q-1}f\ast P_t(x)dt $$ and 
	$$ T_{r,\alpha}f(x)  =  \frac{1}{2\pi}\int_{-\infty}^{\infty} \Big( 2 \frac{r^{-Q}}{\Gamma(\alpha)} \int_{0}^{r}\Big(1-\frac{t^2}{r^2} \Big)^{\alpha-1}t^{Q-1} t^{-i\gamma} dt \Big)  d(Q,\gamma) f\ast I_{\gamma}(x) d\gamma .$$
The inner integral can be explicitly calculated: 
$$r^{-Q}\int_{0}^{r}\big(1-\frac{t^2}{r^2}\big)^{\alpha-1}t^{Q-1}t^{-i\gamma}dt
	= r^{-i\gamma}\int_{0}^{1}(1-t^2)^{\alpha-1}t^{Q-1-i\gamma}dt$$
	which reduces to a beta integral and yields 
	$$ 2 \frac{r^{-Q}}{\Gamma(\alpha)} \int_{0}^{r}\Big(1-\frac{t^2}{r^2} \Big)^{\alpha-1}t^{Q-1} t^{-i\gamma} dt =  \frac{\Gamma(\alpha)\Gamma(\frac{Q-i\gamma}{2})}{\Gamma(\alpha+\frac{Q-i\gamma}{2})}.$$
Consequently, we obtain the representation
$$  T_{r, \alpha}f:=  \frac{1}{2\pi} \int_{-\infty}^{\infty}d(Q,\gamma)r^{-i\gamma}\frac{\Gamma(\frac{Q-i\gamma }{2})}{\Gamma(\alpha+\frac{Q-i\gamma}{2})}f\ast I_{\gamma}(x)d\gamma.$$
proving the theorem.

\end{proof}

If we define $ A_{r,\alpha} f = f \ast \phi_{r,\alpha}, $ then by the  above proposition we have $ A_{r,\alpha} f = S_{r,\alpha} f + T_{r,\alpha}f $ where
the above holds under the assumption that $ \Re (\alpha) > 0.$ But both $ S_{r,\alpha} $ and $ T_{r,\alpha} $ have analytic continuation to a larger domain of $ \alpha.$ Indeed, $ T_{r,\alpha} $ can be extended to the whole of $ \C $ as an entire function and $ S_{r,\alpha} $ extends holomorphically to the region $ \Re(\alpha) > -n.$ Thus $ A_{r,\alpha}$ is an analytic family of operators and when $ \alpha $ goes to 0 we recover $  f \ast \sigma_r$.

In order to study the $ L^p$ improving property of the spherical mean value operator $ f \rightarrow f \ast \sigma_r $ we use analytic interpolation. It is enough  to prove an $L^p$- improving property for the operator $T_{r,0}$ by studying the family $ T_{r,\alpha}.$  We shall show that for $\alpha=1+i\beta$, the operator $T_{r,\alpha}$ is bounded from $L^{1+\delta}$ to $L^{\infty}$ for any $\delta>0$ and for some negative value of $\Re(\alpha)$, it is bounded on $L^2(\Ha).$  By a dilation argument, we can assume that $ r =1 $ and hence we deal with $ T_\alpha := T_{1,\alpha}.$ To handle the $ L^2 $ boundedness, we need the following Fourier transform computation.
\begin{prop}
	For $\lambda\neq0$, the Heisenberg group Fourier transform of  the distribution $|.|^{-Q+i\gamma}$ is given by 
	$$\widehat{|.|^{-Q+i\gamma}}(\lambda)= (2\pi)^{n+1}  |\lambda|^{-i\gamma/2} \frac{\Gamma(\frac{i\gamma}{2})}{\Gamma(\frac{Q}{4}-\frac{i\gamma}{4})^2} \sum_{k=0}^{\infty}\frac{\Gamma\big(\frac{2k+n}{2}+\frac{2-i\gamma}{4}\big)}{\Gamma\big(\frac{2k+n}{2}+\frac{2+i\gamma}{4}\big)}P_k(\lambda).$$
\end{prop}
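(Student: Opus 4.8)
The essential observation is that $|\cdot|^{-Q+i\gamma}$ is $U(n)$-invariant, so by the radial Fourier theory recalled above its group Fourier transform is diagonalized by the Hermite projections, $\widehat{|\cdot|^{-Q+i\gamma}}(\lambda)=\sum_{k\ge0}R_k(\lambda)P_k(\lambda)$, and the whole task is to compute the scalars $R_k(\lambda)$. Since $|\cdot|^{-Q+i\gamma}$ is not locally integrable on $\Ha$, I regard it as the analytic continuation in $\gamma$ from the region $\Im\gamma<0$, where all the integrals below converge absolutely, and continue at the end. Rather than compute the Fourier transform of the anisotropic norm power directly, I would use the polar decomposition: pairing against a test function and using $|\delta_r\omega|^{-Q+i\gamma}=r^{-Q+i\gamma}$ yields $|\cdot|^{-Q+i\gamma}=\int_0^\infty r^{i\gamma-1}\sigma_r\,dr$ as distributions, hence $R_k(\lambda)=\int_0^\infty r^{i\gamma-1}R_k(\lambda,\sigma_r)\,dr$, with $R_k(\lambda,\sigma_r)$ the explicit Laguerre integral over $[-\pi/2,\pi/2]$ already recorded above.

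Now the plan is to perform the $r$-integration first and the $\theta$-integration second, each in closed form. After the substitution $v=r^2$, the inner $r$-integral is a Mellin transform of a Laguerre function against a Gaussian; for $\lambda>0$ (the case $\lambda<0$ is analogous) the Gaussian exponent collapses to a multiple of $v\,e^{-i\theta}$, and the classical evaluation of $\int_0^\infty v^{s-1}e^{-pv}L_k^\alpha(av)\,dv$ as a terminating ${}_2F_1$ applies, producing a polynomial in $1+e^{2i\theta}=2e^{i\theta}\cos\theta$. The remaining $\theta$-integral is then evaluated termwise by the standard formula for $\int_{-\pi/2}^{\pi/2}e^{ia\theta}(\cos\theta)^b\,d\theta$; after collecting Pochhammer symbols one is left with a ${}_2F_1$ of unit argument, which the Chu--Vandermonde identity sums to $\Gamma(k+\tfrac{Q-i\gamma}{4})/\Gamma(k+\tfrac{Q+i\gamma}{4})$ --- i.e.\ precisely $\Gamma(\tfrac{2k+n}{2}+\tfrac{2-i\gamma}{4})/\Gamma(\tfrac{2k+n}{2}+\tfrac{2+i\gamma}{4})$ --- times $k$-independent Gamma factors. (As an independent check one may instead start from $R_k(\lambda)=\tfrac{k!(n-1)!}{(k+n-1)!}\int_\Ha e^{i\lambda t}\varphi_k^\lambda(z)|(z,t)|^{-Q+i\gamma}\,dz\,dt$ directly: the $t$-integration of $(|z|^4+16t^2)^{(-Q+i\gamma)/4}$ is a classical Fourier integral giving a Macdonald function $K_{\frac{Q-i\gamma}{4}-\frac12}(\tfrac14|\lambda||z|^2)$, and the ensuing radial integral over $\C^n$ is a Laguerre--Macdonald integral, handled via the Laplace representation of $K_\mu$, the identity $\int_0^\infty e^{-pu}u^{n-1}L_k^{n-1}(u)\,du=\tfrac{\Gamma(n+k)}{k!}(p-1)^k p^{-n-k}$, and a Beta integral; it yields the same gamma ratio.)

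It remains to assemble the constants and dispose of the technical points. The factors $\tfrac{k!(n-1)!}{(k+n-1)!}$ and $\binom{k+n-1}{k}$ cancel; combining the Faraut--Harzallah prefactor $\tfrac{\Gamma((n+1)/2)}{\sqrt\pi\,\Gamma(n/2)}$, the duplication formula for $\Gamma(n)$, and the surviving factors from the $r$- and $\theta$-integrations gives the normalization $(2\pi)^{n+1}\,|\lambda|^{-i\gamma/2}\,\Gamma(\tfrac{i\gamma}{2})/\Gamma(\tfrac Q4-\tfrac{i\gamma}{4})^2$; here one must keep careful track of the $(2\pi)$- and $|\lambda|$-powers implicit in the normalization of $R_k(\lambda,\mu)$ used above. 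Finally, interchanging the sum over $k$ with the $r$- and $\theta$-integrals is justified on compactly supported test functions and then extended by density/duality, and the identity, first proved for $\Im\gamma<0$, extends to real $\gamma\ne0$ by analytic continuation, both sides being meromorphic in $\gamma$ with the same poles. The step I expect to be the main obstacle is this chain of special-function evaluations --- the Laguerre Mellin transform, the trigonometric integral, and the Chu--Vandermonde summation --- together with the less glamorous bookkeeping of every multiplicative constant.
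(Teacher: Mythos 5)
The paper offers no proof of this proposition at all: immediately after the statement it says ``This has been proved in the work of Cowling and Haagerup \cite{CH}'' and moves on (only the operator norm of $\widehat{|\cdot|^{-Q+i\gamma}}(\lambda)$ is ever used, in Proposition 3.4). Your proposal is therefore a genuinely different route: a self-contained derivation from the Faraut--Harzallah decomposition of $\sigma_r$ recorded in Section 2. I have checked that the chain you describe does go through. Writing $|\cdot|^{-Q+i\gamma}=\int_0^\infty r^{i\gamma-1}\sigma_r\,dr$ (which is exactly the polar-decomposition identity used in the proof of Theorem 3.2), substituting $v=r^2$ and applying the classical evaluation $\int_0^\infty e^{-pv}v^{s-1}L_k^{n-1}(av)\,dv=\frac{\Gamma(s)(n)_k}{k!}p^{-s}\,{}_2F_1(-k,s;n;a/p)$ with $s=\tfrac{i\gamma}{2}$, $p=\tfrac{|\lambda|}{4}e^{-i\theta}$, $a=\tfrac{|\lambda|}{2}\cos\theta$ gives $a/p=1+e^{2i\theta}$ as you say; expanding the terminating ${}_2F_1$, integrating term by term via $\int_{-\pi/2}^{\pi/2}e^{ia\theta}(\cos\theta)^b\,d\theta=\pi\Gamma(b+1)2^{-b}\Gamma(1+\tfrac{b+a}{2})^{-1}\Gamma(1+\tfrac{b-a}{2})^{-1}$, and summing by Chu--Vandermonde yields exactly $\big(\tfrac{n+1}{2}-\tfrac{i\gamma}{4}\big)_k/\big(\tfrac{n+1}{2}+\tfrac{i\gamma}{4}\big)_k$, i.e.\ the stated ratio $\Gamma(\tfrac{2k+n}{2}+\tfrac{2-i\gamma}{4})/\Gamma(\tfrac{2k+n}{2}+\tfrac{2+i\gamma}{4})$, together with the prefactor $\Gamma(\tfrac{i\gamma}{2})\Gamma(\tfrac{Q}{4}-\tfrac{i\gamma}{4})^{-2}$ after the factors $\tfrac{k!(n-1)!}{(k+n-1)!}$ and $(n)_k/k!$ cancel and the duplication formula is applied. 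Two caveats, neither fatal: the computation naturally produces $(|\lambda|/4)^{-i\gamma/2}$ and a constant of the form $\tfrac12\Gamma(\tfrac{n+1}{2})^2$ rather than the displayed $(2\pi)^{n+1}|\lambda|^{-i\gamma/2}$ --- the discrepancy is a unimodular factor plus an absolute constant depending on the normalization of $R_k(\lambda,\mu)$, and is irrelevant to the only use made of the formula (an operator-norm bound); and your regularization of the non-locally-integrable kernel by analytic continuation from $\Im\gamma<0$ is the right move and is consistent with the paper's convention of working with $I_\gamma=C(Q,\gamma)|x|^{-Q+i\gamma}$, where $1/\Gamma(\tfrac{i\gamma}{2})$ absorbs the pole at $\gamma=0$. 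What your approach buys is a proof internal to the paper's own toolkit; what the citation to \cite{CH} buys is brevity and a rigorously justified constant.
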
 

This has been proved in the work of Cowling and Haagerup \cite{CH}. From the above proposition it is now easy to prove the following:
\begin{prop}
	\label{l22}
		Assume that $n\geq1$. Then for any $\alpha\in\mathbb{C}$ with $Re(\alpha)>-n+\frac{1}{2}$ we have $$\|T_{\alpha}f\|_2\leq C(\Im(\alpha))\|f\|_2 $$ where $C(\Im(\alpha))$ has admissible growth. 
\end{prop}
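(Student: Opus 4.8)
The plan is to realize $T_\alpha$ as a Fourier multiplier operator for the group Fourier transform and to bound its symbol in operator norm. By Proposition \ref{lpr} (with $r=1$) we have
$$ T_\alpha f=\frac{1}{2\pi}\int_{-\infty}^\infty d(Q,\gamma)\,\frac{\Gamma(\tfrac{Q-i\gamma}{2})}{\Gamma(\alpha+\tfrac{Q-i\gamma}{2})}\; f\ast I_\gamma\; d\gamma, $$
and since the convolution convention on $\Ha$ gives $\widehat{f\ast I_\gamma}(\lambda)=\widehat f(\lambda)\,\widehat{I_\gamma}(\lambda)$, taking the group Fourier transform yields $\widehat{T_\alpha f}(\lambda)=\widehat f(\lambda)\,m_\alpha(\lambda)$ with
$$ m_\alpha(\lambda)=\frac{1}{2\pi}\int_{-\infty}^\infty d(Q,\gamma)\,\frac{\Gamma(\tfrac{Q-i\gamma}{2})}{\Gamma(\alpha+\tfrac{Q-i\gamma}{2})}\,\widehat{I_\gamma}(\lambda)\,d\gamma. $$
By the Plancherel theorem recalled in Section 2, it suffices to show $\sup_{\lambda\neq 0}\|m_\alpha(\lambda)\|_{\mathrm{op}}\le C(\Im\alpha)$ with $C(\Im\alpha)$ of admissible growth, since then $\|T_\alpha f\|_2\le C(\Im\alpha)\|f\|_2$.

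First I would simplify $\widehat{I_\gamma}(\lambda)$. Writing $I_\gamma=C(Q,\gamma)|\cdot|^{-Q+i\gamma}$, inserting the stated Fourier transform of $|\cdot|^{-Q+i\gamma}$ and the value $C(Q,\gamma)=c_n\Gamma(\tfrac{Q-i\gamma}{4})^2/\Gamma(\tfrac{i\gamma}{2})$, the factors $\Gamma(\tfrac{i\gamma}{2})$ and $\Gamma(\tfrac{Q-i\gamma}{4})^2$ cancel exactly (note $\tfrac{Q-i\gamma}{4}=\tfrac Q4-\tfrac{i\gamma}{4}$), leaving
$$ \widehat{I_\gamma}(\lambda)=c_n(2\pi)^{n+1}\,|\lambda|^{-i\gamma/2}\sum_{k=0}^\infty\frac{\Gamma\big(k+\tfrac{n+1}{2}-\tfrac{i\gamma}{4}\big)}{\Gamma\big(k+\tfrac{n+1}{2}+\tfrac{i\gamma}{4}\big)}\,P_k(\lambda). $$
Thus $m_\alpha(\lambda)=\sum_k m_\alpha(\lambda;k)P_k(\lambda)$ is diagonal in the Hermite basis, so $\|m_\alpha(\lambda)\|_{\mathrm{op}}=\sup_k|m_\alpha(\lambda;k)|$. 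Now $|\,|\lambda|^{-i\gamma/2}|=1$ for $\lambda\neq0$, and since $k+\tfrac{n+1}{2}$ is real and positive the ratio $\Gamma(k+\tfrac{n+1}{2}-\tfrac{i\gamma}{4})/\Gamma(k+\tfrac{n+1}{2}+\tfrac{i\gamma}{4})$ is unimodular (numerator $=$ complex conjugate of denominator). Hence, uniformly in $\lambda\neq0$ and $k\ge0$,
$$ |m_\alpha(\lambda;k)|\le C\int_{-\infty}^\infty|d(Q,\gamma)|\,\frac{\big|\Gamma(\tfrac{Q-i\gamma}{2})\big|}{\big|\Gamma(\alpha+\tfrac{Q-i\gamma}{2})\big|}\,d\gamma=:C\,C(\Im\alpha), $$
and the proposition reduces to showing this integral converges and grows admissibly in $\Im\alpha$.

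The only real work is the Stirling analysis of this integral, and this is where I expect the care to be needed and where the hypothesis $\Re\alpha>-n+\tfrac12$ enters. From the defining identity $d(Q,\gamma)C(Q,\gamma)=1-\Gamma(\tfrac{Q-i\gamma}{2})\Gamma(\tfrac{1+i\gamma}{2})/(\Gamma(\tfrac Q2)\Gamma(\tfrac12))$ one checks, using $|\Gamma(x+iy)|\sim\sqrt{2\pi}\,|y|^{x-\frac12}e^{-\pi|y|/2}$ as $|y|\to\infty$, that the simple pole of $\Gamma(\tfrac{i\gamma}{2})$ at $\gamma=0$ is killed by the simple zero at $\gamma=0$ of the bracket, so $d(Q,\gamma)$ is continuous on $\R$; and that as $|\gamma|\to\infty$ the factors $e^{-\pi|\gamma|/4}$ coming from $\Gamma(\tfrac{i\gamma}{2})$ and from $\Gamma(\tfrac{Q-i\gamma}{4})^2$ cancel (the subtracted term carries an extra $e^{-\pi|\gamma|/2}$ and is negligible), leaving $|d(Q,\gamma)|\lesssim(1+|\gamma|)^{\frac12-\frac Q2}$. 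Combined with $|\Gamma(\tfrac{Q-i\gamma}{2})|\lesssim(1+|\gamma|)^{\frac Q2-\frac12}e^{-\pi|\gamma|/4}$ and $|\Gamma(\alpha+\tfrac{Q-i\gamma}{2})|^{-1}\lesssim(1+|\tfrac\gamma2-\Im\alpha|)^{\frac12-\Re\alpha-\frac Q2}e^{\pi|\tfrac\gamma2-\Im\alpha|/2}$, one finds: on $|\gamma|\ge 2|\Im\alpha|$ the integrand is $\lesssim e^{\pi|\Im\alpha|/2}(1+|\gamma|)^{\frac12-\frac Q2-\Re\alpha}$, which is integrable exactly because $Q=2n+2$ and $\Re\alpha>-n+\tfrac12$ give $\tfrac12-\tfrac Q2-\Re\alpha<-1$; on $|\gamma|\le 2|\Im\alpha|$ one uses that $|d(Q,\gamma)\Gamma(\tfrac{Q-i\gamma}{2})|$ is bounded and, after the substitution $u=\Im\alpha-\tfrac\gamma2$, integrates $|\Gamma((\Re\alpha+\tfrac Q2)+iu)|^{-1}$, picking up at most $C\,e^{\pi|\Im\alpha|}(1+|\Im\alpha|)^{N}$. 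Altogether $C(\Im\alpha)\lesssim e^{\pi|\Im\alpha|}(1+|\Im\alpha|)^{N}$, which is of admissible growth for Stein's analytic interpolation theorem, and by Plancherel the bound $\|T_\alpha f\|_2\le C\,C(\Im\alpha)\|f\|_2$ follows. The delicate step is the precise bookkeeping of the Stirling exponents so that all exponential-in-$\gamma$ factors cancel and the sharp threshold $\Re\alpha=-n+\tfrac12$ appears; the rest is routine.
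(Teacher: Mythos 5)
Your argument is correct and follows essentially the same route as the paper: reduce via Plancherel/Minkowski to the convergence of a $\gamma$-integral of Gamma-function ratios, use the Cowling--Haagerup formula together with the unimodularity of the coefficients $\Gamma(k+\tfrac{n+1}{2}-\tfrac{i\gamma}{4})/\Gamma(k+\tfrac{n+1}{2}+\tfrac{i\gamma}{4})$, and then carry out the Stirling bookkeeping that produces the exponent $-\Re\alpha-n-\tfrac12$ and hence the threshold $\Re\alpha>-n+\tfrac12$. The only cosmetic difference is that you cancel $C(Q,\gamma)$ against $\Gamma(\tfrac{i\gamma}{2})/\Gamma(\tfrac{Q}{4}-\tfrac{i\gamma}{4})^2$ explicitly, whereas the paper keeps $a(Q,\gamma)=d(Q,\gamma)C(Q,\gamma)$ and the Gamma factors separate; the resulting integrand is identical.
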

\begin{proof}
	Note that if we write $a(Q,\gamma):=d(Q,\gamma)C(Q,\gamma)$ we have 
	$$T_{\alpha}f(x)=\int_{-\infty}^{\infty}a(Q,\gamma)r^{-i\gamma}\frac{\Gamma(\frac{Q-i\gamma }{2})}{\Gamma(\alpha+\frac{Q-i\gamma}{2})}f\ast |.|^{-Q+i\gamma}(x)d\gamma .$$ 
It is therefore enough to show that	
$$  \int_{-\infty}^{\infty} |a(Q,\gamma)|   \frac{|\Gamma(\frac{Q-i\gamma }{2})|}{|\Gamma(\alpha+\frac{Q-i\gamma}{2})|} b(Q,\gamma) \,\,d\gamma \leq C(\Im(\alpha)) $$ 
where $ b(Q, \gamma) $ is the norm of the operator $ f \rightarrow f \ast |\cdot|^{-Q+i\gamma} $ on $ L^2(\Ha) $ so that we have the inequality
$$  \| f \ast |\cdot|^{-Q+i\gamma} \|_2  \leq b(Q,\gamma) \|f\|_2.$$
In view of Plancherel theorem for the group Fourier transform on $\Ha$ we have the estimate 
$$  b(Q, \gamma) \leq C \sup_{\lambda} \| \widehat{|.|^{-Q+i\gamma}}(\lambda) \|.$$ 
Using the computation in the previous proposition, we have
$$   \| \widehat{|.|^{-Q+i\gamma}}(\lambda) \| \leq C  \frac{|\Gamma(\frac{i\gamma}{2})|}{|\Gamma(\frac{Q}{4}-\frac{i\gamma}{4})|^2} \sup_{k}\frac{|\Gamma\big(\frac{2k+n}{2}+\frac{2-i\gamma}{4}\big)|}{|\Gamma\big(\frac{2k+n}{2}+\frac{2+i\gamma}{4}\big)|} = C \frac{|\Gamma(\frac{i\gamma}{2})|}{|\Gamma(\frac{Q}{4}-\frac{i\gamma}{4})|^2} .$$ Thus we only need to show that
$$  \int_{-\infty}^{\infty} |a(Q,\gamma)|   \frac{|\Gamma(\frac{Q-i\gamma }{2})|}{|\Gamma(\alpha+\frac{Q-i\gamma}{2})|}  \frac{|\Gamma(\frac{i\gamma}{2})|}{|\Gamma(\frac{Q}{4}-\frac{i\gamma}{4})|^2}  \,\,d\gamma \leq C(\Im(\alpha)) .$$

In order to prove the above, we first recall that $$a(Q,\gamma)=\Big(1- \frac{\Gamma\big(\frac{Q-i\gamma}{2}\big)\Gamma\big(\frac{1+i\gamma}{2}\big)}{\Gamma\big(\frac{Q}{2}\big)\Gamma\big(\frac{1}{2}\big)}\Big)$$ 
and hence $ a(Q,\gamma) $ has a zero at $ \gamma = 0.$ Consequently,
$$  \int_{-1}^{1} \frac{|a(Q,\gamma)|}{|\gamma|}    \frac{|\Gamma(\frac{Q-i\gamma }{2})|}{|\Gamma(\alpha+\frac{Q-i\gamma}{2})|}  \frac{|\Gamma(1+\frac{i\gamma}{2})|}{|\Gamma(\frac{Q}{4}-\frac{i\gamma}{4})|^2}  \,\,d\gamma \leq C_1(\Im(\alpha)) $$  as long as $ \Re(\alpha) > -n-1.$
To prove the integrability away from the origin we make use of the following asymptotic formula for the gamma function: for $ |\nu| $ large
$$\Gamma(\mu+i\nu) \sim \sqrt{2\pi}|\nu|^{\mu-1/2}e^{-\frac{1}{2}\pi|\nu|}.$$ So using this formula, a simple calculation shows that for $ |\gamma| \geq 1$
$$ \frac{|a(Q,\gamma)|}{|\gamma|}    \frac{|\Gamma(\frac{Q-i\gamma }{2})|}{|\Gamma(\alpha+\frac{Q-i\gamma}{2})|}  \frac{|\Gamma(1+\frac{i\gamma}{2})|}{|\Gamma(\frac{Q}{4}-\frac{i\gamma}{4})|^2} \leq C_2(\Im(\alpha)) |\gamma|^{-\Re(\alpha)-n-1/2}.$$ Therefore, it follows that
$$  \int_{|\gamma| \geq 1} \frac{|a(Q,\gamma)|}{|\gamma|}    \frac{|\Gamma(\frac{Q-i\gamma }{2})|}{|\Gamma(\alpha+\frac{Q-i\gamma}{2})|}  \frac{|\Gamma(1+\frac{i\gamma}{2})|}{|\Gamma(\frac{Q}{4}-\frac{i\gamma}{4})|^2}  \,\,d\gamma \leq C_2(\Im(\alpha)) $$  for all $ \Re(\alpha) > -n+1/2.$
This completes the proof of the proposition.
\end{proof}

\begin{prop} \label{l1i}
			For any $\beta\in\mathbb{R}$ and $ p > 1$ we have, $$\|T_{1+i\beta}f\|_{\infty}\leq C(\beta)\|f\|_p.$$
\end{prop}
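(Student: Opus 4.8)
The plan is to exploit the splitting provided by Proposition~\ref{lpr}: specializing to $r=1$ and $\alpha=1+i\beta$ gives $T_{1+i\beta} = A_{1,1+i\beta} - S_{1,1+i\beta}$, where $A_{1,\alpha}f = f\ast\phi_{1,\alpha}$ and $S_{1,\alpha}f(x) = \frac{2}{\Gamma(\alpha)}\int_0^1(1-t^2)^{\alpha-1}t^{Q-1}f\ast P_t(x)\,dt$. I would bound each of the two operators separately from $L^p$ to $L^\infty$, using only the pointwise size of the convolution kernels together with H\"older's inequality. For the first term, note that at $\alpha = 1+i\beta$ the kernel $\phi_{1,1+i\beta}(x) = \frac{2}{\Gamma(1+i\beta)}(1-|x|^2)_+^{i\beta}$ has absolute value $2/|\Gamma(1+i\beta)|$ on the Koranyi unit ball $\{|x|<1\}$ and vanishes off it, since $|(1-|x|^2)^{i\beta}| = 1$ there; as $\Ha$ is unimodular and inversion preserves the Haar measure, H\"older's inequality immediately gives $|A_{1,1+i\beta}f(x)| \le \|f\|_p\,\|\phi_{1,1+i\beta}\|_{p'} \le C\,|\Gamma(1+i\beta)|^{-1}\|f\|_p$, the $L^{p'}$ norm of the kernel being finite because the kernel is bounded and compactly supported.

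For the second term I would use that $\Re(1+i\beta)=1$ makes the factor $(1-t^2)^{\alpha-1}$ equal to $1$, so that $|S_{1,1+i\beta}f| \le \frac{2}{|\Gamma(1+i\beta)|}\int_0^1 t^{Q-1}\,|f|\ast P_t\,dt$. The one genuine computation is the $L^{p'}$ norm of the Poisson-type kernel $P_t(x) = c_Q t(t^2+|x|^2)^{-(Q+1)/2}$: the dilation substitution $x = \delta_t y$, under which $dx = t^Q\,dy$ and $|x| = t|y|$ because the Haar measure is homogeneous of degree $Q$ and the Koranyi norm of degree $1$, shows that $\|P_t\|_{p'} = c_{Q,p}\,t^{-Q/p}$, the underlying integral $\int_{\Ha}(1+|y|^2)^{-(Q+1)p'/2}\,dy$ being finite for every $p>1$ since $(Q+1)p' > Q$. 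Hence $\|f\ast P_t\|_\infty \le c_{Q,p}\,t^{-Q/p}\|f\|_p$ by H\"older, and by Minkowski's integral inequality
$$ \|S_{1,1+i\beta}f\|_\infty \le \frac{2\,c_{Q,p}}{|\Gamma(1+i\beta)|}\Big(\int_0^1 t^{Q-1-Q/p}\,dt\Big)\|f\|_p = \frac{2\,c_{Q,p}}{|\Gamma(1+i\beta)|}\cdot\frac{p'}{Q}\,\|f\|_p, $$
the integral converging precisely because $Q-1-Q/p = Q/p'-1 > -1$.

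Adding the two estimates yields the proposition, with $C(\beta)$ of the order of $|\Gamma(1+i\beta)|^{-1} = O(e^{\pi|\beta|/2})$ as $|\beta|\to\infty$ by the Gamma function asymptotics recalled earlier --- precisely the admissible growth needed for the analytic interpolation carried out afterwards. I do not foresee a real obstacle here: the computation is routine, and the only points that deserve a moment's care are the scaling identity $\|P_t\|_{p'} = c_{Q,p}\,t^{-Q/p}$ and the integrability of $t^{Q/p'-1}$ near $t=0$, both of which are comfortable for all $p>1$. (As usual one first proves the estimate for Schwartz $f$ and extends by density.)
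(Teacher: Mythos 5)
Your argument is correct and follows essentially the same route as the paper: split $T_{1+i\beta}$ via Proposition~\ref{lpr} into the $\phi_{1,1+i\beta}$-convolution (whose kernel has constant modulus $2/|\Gamma(1+i\beta)|$ on the Koranyi unit ball, handled by H\"older) and the $P_t$-term (handled by the scaling $P_t(x)=t^{-Q}P_1(\delta_t^{-1}x)$, H\"older, and the convergence of $\int_0^1 t^{Q-1-Q/p}\,dt$). Your added remarks on the exact value $\|P_t\|_{p'}=c_{Q,p}t^{-Q/p}$ and the admissible growth of $C(\beta)$ are consistent with what the interpolation step requires.
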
 
  \begin{proof}
  	 For any $p>1$, to prove $L^p\rightarrow L^{\infty}$ estimate, first note that in view of the proposition \ref{lpr}, for any $\beta\in\mathbb{R}$ we have
  	 \begin{align}
  	 T_{1+i\beta}f(x)= f\ast\phi_{1,1+i\beta}(x)-\frac{1}{\Gamma(1+i\beta)}\int_{0}^{1}\Big(1-t^2 \Big)^{i\beta}t^{Q-1}f\ast P_t(x)dt.
  	 \end{align}
  	 So for any $x\in\Ha$ we have 
  	 \begin{align}
  	 \label{rep2}
  	 | T_{1+i\beta}f(x)|\leq |f\ast\phi_{1,1+i\beta}(x)|+ \frac{1}{|\Gamma(1+i\beta)|}\int_{0}^{1} t^{Q-1}|f\ast P_t(x)|dt .
  	 \end{align}
  	 Now see that $$|f\ast\phi_{1,1+i\beta}(x)|\leq \frac{1}{|\Gamma(1+i\beta)|} |f|\ast \chi_{B_1}(x)\leq C(\beta)\|f\|_p.$$
	 As we have $ P_t(x) = t^{-Q}P_1( \delta_t^{-1}x)$ it follows that for any $ p>1$,
	  $$  | f \ast P_t(x)|  \leq t^{-Q} \|f\|_p  \big(\int_{\Ha} P_1(\delta_t^{-1}x)^{p'} dx \big)^{1/p'}  \leq C t^{-Q/p} \|f\|_p .$$ Consequently, the integral in (1.2) is bounded by $  \|f\|_p .$
  	  Finally, these two estimates together with \ref{rep2} we get $$\|T_{1+i\beta}f\|_{\infty}\leq C(\beta) \|f\|_p.$$
  \end{proof}
  
  By using the above two propositions and an analytic interpolation argument, we now prove the following result.
\begin{prop}
	\label{it}
	For any $ 0 < \delta < 1$, we have  $T_0:L^{p_\delta}(\Ha)\rightarrow L^{q_\delta}(\Ha)$ where $ p_\delta = (1+\delta) \frac{2n+1-\delta}{2n} $ and $ q_\delta = (2n+1-\delta).$ 
\end{prop}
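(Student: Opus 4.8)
The plan is to obtain this purely by Stein's analytic interpolation theorem applied to the entire family $\{T_\alpha\}_{\alpha\in\C}$ (recall $T_\alpha=T_{1,\alpha}$, which extends to an entire function of $\alpha$), interpolating the two endpoint estimates already in hand: Proposition \ref{l1i} on the vertical line $\Re(\alpha)=1$, and Proposition \ref{l22} on a vertical line $\Re(\alpha)=a$ chosen in the range $-n+\tfrac12<a<0$. Since $T_0$ is precisely the operator we need, the whole matter is choosing $a$ so that the interpolated exponents at $\alpha=0$ come out to $p_\delta$ and $q_\delta$.

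Concretely, I would fix $a=-n+\tfrac{1+\delta}{2}$. Because $0<\delta<1$ (and $n\ge1$) this satisfies $-n+\tfrac12<a<0$, so Proposition \ref{l22} gives $\|T_{a+i\beta}f\|_2\le C(\beta)\|f\|_2$ with admissible growth in $\beta$; and Proposition \ref{l1i} applied with the exponent $p=1+\delta>1$ gives $\|T_{1+i\beta}f\|_\infty\le C(\beta)\|f\|_{1+\delta}$, again with admissible growth. To put these in the form required by Stein's theorem I would reparametrize to the unit strip $0\le\Re(z)\le1$ by setting $G_z=T_{a(1-z)+z}$: this is an analytic family on the strip, its boundary constants inherit admissible growth (the affine change of variable only rescales the imaginary part), $G_0$ and $G_1$ are the two endpoint operators above, and $G_\theta=T_0$ for $\theta=\tfrac{-a}{1-a}$, which lies in $(0,1)$ since $a<0$.

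Stein interpolation then yields $\|T_0f\|_{q_\theta}\le C\|f\|_{p_\theta}$ with $\tfrac{1}{q_\theta}=\tfrac{1-\theta}{2}$ and $\tfrac{1}{p_\theta}=\tfrac{1-\theta}{2}+\tfrac{\theta}{1+\delta}$, and it remains to evaluate the exponents for our choice of $a$. Here $1-a=\tfrac{2n+1-\delta}{2}$ and $-a=\tfrac{2n-1-\delta}{2}$, so $\theta=\tfrac{2n-1-\delta}{2n+1-\delta}$ and $1-\theta=\tfrac{2}{2n+1-\delta}$. This gives $q_\theta=2n+1-\delta=q_\delta$ and
\[
\frac{1}{p_\theta}=\frac{1}{2n+1-\delta}\Big(1+\frac{2n-1-\delta}{1+\delta}\Big)=\frac{2n}{(1+\delta)(2n+1-\delta)},
\]
i.e. $p_\theta=(1+\delta)\tfrac{2n+1-\delta}{2n}=p_\delta$, as claimed.

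The only genuine point to verify is that the constants $C(\beta)$ in the two endpoint estimates grow at most like $e^{c|\beta|}$ with $c<\pi$, so that Stein's theorem applies; this is the main (though mild) obstacle, and it follows from the Gamma-function asymptotics already invoked in the proof of Proposition \ref{l22} — the relevant factors are $1/\Gamma(1+i\beta)$ and ratios of Gamma values with shifted arguments, each contributing at most $e^{\pi|\beta|/2}$. Everything else is bookkeeping of exponents; the substantive analytic inputs, namely $L^2$-boundedness for $\Re(\alpha)>-n+\tfrac12$ and the $L^{1+\delta}\to L^\infty$ bound on $\Re(\alpha)=1$, are exactly Propositions \ref{l22} and \ref{l1i}.
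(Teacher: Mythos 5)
Your proof is correct and takes essentially the same route as the paper: the authors also apply Stein interpolation to the family $T_{L(z)}$ with $L(z)=(-n+\tfrac{1+\delta}{2})(1-z)+z$, using Proposition \ref{l22} at $z=0$ and Proposition \ref{l1i} with exponent $1+\delta$ at $z=1$, and solve $L(u)=0$ to obtain the same $u=\tfrac{2n-1-\delta}{2n+1-\delta}$ and the same exponents $p_\delta, q_\delta$. Your explicit attention to the admissible growth of the boundary constants is a point the paper leaves implicit, but otherwise the two arguments coincide.
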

\begin{proof}
Given $\delta >0$ we consider the holomorphic family of operators $T_{L(z)}$, where $L(z)= (-n+\frac{1+\delta}{2}) (1-z)+z.$ 
In view of the Propositions \ref{l22} and \ref{l1i}, Stein's interpolation theorem gives
$$T_{L(u)}:L^{p(u)}(\Ha)\rightarrow L^{q(u)}(\Ha)$$ where $\frac{1}{p(u)}=\frac{1-u}{2}+\frac{u}{1+\delta}$ and $\frac{1}{q(u)}= \frac{1}{2}(1-u).$ Solving for  $L(u)=0$ we get $ u = \frac{2n-1-\delta}{2n+1-\delta}$ and simplifying we get $ p(u) = p_\delta = (1+\delta) \frac{2n+1-\delta}{2n} $ and $ q(u) =q_\delta = (2n+1-\delta).$ 

\end{proof}

The following result which follows from the above end point estimates by means of analytic interpolation describes the $ L^p $ improving properties of the spherical mean operator $ A_r.$
\begin{thm}
		Assume that $n\geq1$. Then we have
		$$  \| A_r f \|_q \leq C r^{Q(\frac{1}{q}-\frac{1}{p})} \|f \|_p $$ 
		whenever $(\frac{1}{p},\frac{1}{q})$ lies in the interior of the triangle joining the points $(0,0),(1,1),$ and $(\frac{2n}{2n+1},\frac{1}{2n+1})$ as well as the straight line joining the points $(0,0),(1,1).$
\end{thm}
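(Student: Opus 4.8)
The plan is to use the decomposition $A_rf=f\ast P_r+T_{r,0}f$ furnished by the representation of $\sigma_t$ proved earlier in this section, to reduce matters to the scale $r=1$ by a dilation, and then to interpolate the endpoint estimate of Proposition \ref{it} against the trivial $L^p\to L^p$ bounds. \emph{Step 1 (reduction to $r=1$).} Since $\sigma_r=\delta_r\sigma$ and each $\delta_r$ is an automorphism of $\Ha$, so that $\delta_r(xy^{-1})=(\delta_rx)\delta_r(y^{-1})$, a direct computation gives $A_rf=\delta_{1/r}\bigl(A_1(\delta_rf)\bigr)$. As $\|\delta_{1/r}g\|_q=r^{Q/q}\|g\|_q$ and $\|\delta_rf\|_p=r^{-Q/p}\|f\|_p$, any bound $\|A_1h\|_q\le C\|h\|_p$ upgrades to $\|A_rf\|_q\le C\,r^{Q(1/q-1/p)}\|f\|_p$. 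Hence it suffices to prove the claimed $L^p\to L^q$ boundedness of $A_1$.

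\emph{Step 2 (two easy families of estimates for $A_1$).} First, since $\sigma$ is a finite measure, Minkowski's integral inequality gives $A_1\colon L^p(\Ha)\to L^p(\Ha)$ for every $1\le p\le\infty$, which covers the whole diagonal segment joining $(0,0)$ and $(1,1)$. Next, the representation $\sigma_1=P_1+\frac{1}{2\pi}\int_{-\infty}^{\infty}d(Q,\gamma)I_\gamma\,d\gamma$ yields $A_1f=f\ast P_1+T_{1,0}f$, where $T_{1,0}f=\frac{1}{2\pi}\int_{-\infty}^{\infty}d(Q,\gamma)\,f\ast I_\gamma\,d\gamma$ (this is also Proposition \ref{lpr} in the limit $\alpha\to0$). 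The kernel $P_1(x)=c_Q(1+|x|^2)^{-(Q+1)/2}$ decays like $|x|^{-Q-1}$, hence lies in $L^s(\Ha)$ for every $1\le s\le\infty$, so by Young's inequality $f\mapsto f\ast P_1$ is bounded from $L^p$ to $L^q$ for all $1\le p\le q\le\infty$. In particular, taking the exponents $p_\delta=(1+\delta)\tfrac{2n+1-\delta}{2n}$, $q_\delta=2n+1-\delta$ of Proposition \ref{it} (for which $p_\delta\le q_\delta$, because $1+\delta\le2n$ when $n\ge1$ and $0<\delta<1$) and combining with the bound $T_{1,0}\colon L^{p_\delta}\to L^{q_\delta}$ of that proposition, we get
$$ A_1\colon L^{p_\delta}(\Ha)\to L^{q_\delta}(\Ha),\qquad 0<\delta<1. $$

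\emph{Step 3 (interpolation).} Write $O=(0,0)$, $E=(1,1)$, $V=(\tfrac{2n}{2n+1},\tfrac{1}{2n+1})$, and $V_\delta=(1/p_\delta,1/q_\delta)$. A short computation shows $V_\delta\to V$ as $\delta\to0$ and that each $V_\delta$ lies in the interior of the triangle $OEV$. Moreover, given an interior point $P=aO+bE+cV$ with $a,b,c>0$, substituting $V=V_\delta+(V-V_\delta)$ and re-expanding in the barycentric coordinates of the (nondegenerate) triangle $\mathrm{conv}\{O,E,V_\delta\}$ shows, since $\|V-V_\delta\|\to0$, that $P$ lies in the interior of $\mathrm{conv}\{O,E,V_\delta\}$ once $\delta$ is small enough. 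Fixing such a $\delta$, we may write $P=(1-\theta)(s,s)+\theta V_\delta$ with $\theta\in(0,1)$ and $(s,s)\in\overline{OE}$, whence $s\in(1,\infty)$; Riesz--Thorin interpolation between $A_1\colon L^{1/s}\to L^{1/s}$ and $A_1\colon L^{p_\delta}\to L^{q_\delta}$ (both from Step 2) gives $A_1\colon L^p\to L^q$ for this $(1/p,1/q)$. Together with the diagonal estimates and the reduction in Step 1, this is exactly the assertion.

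The genuinely hard analysis has already been spent in Proposition \ref{it} (Stein's analytic interpolation applied to the family $T_{1,\alpha}$, using the $L^2$ bound of Proposition \ref{l22} and the $L^p\to L^\infty$ bound of Proposition \ref{l1i}). In the present argument the only points requiring a little care are: the vertex $V$ is only approached, never attained, so one obtains precisely the \emph{open} triangle $OEV$ (together with the diagonal), in agreement with the statement; the verification that the triangles $\mathrm{conv}\{O,E,V_\delta\}$ exhaust the interior of $OEV$; and checking that the dilation of Step 1 delivers the sharp homogeneity factor $r^{Q(1/q-1/p)}$ rather than an $r$-dependent constant.
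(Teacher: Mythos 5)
Your argument is correct and is essentially the paper's own proof: the same reduction to $r=1$ by dilation, the same decomposition $A_1f=f\ast P_1+T_0f$ with Young's inequality handling the $P_1$ part and Proposition \ref{it} handling $T_0$, followed by interpolation with the diagonal bounds and letting $\delta\to 0$. You merely spell out the homogeneity factor and the exhaustion of the open triangle more explicitly, and invoke Riesz--Thorin where the paper (somewhat loosely) cites Marcinkiewicz; aside from the harmless slip ``$s\in(1,\infty)$'' where you mean the exponent $1/s\in(1,\infty)$, nothing differs.
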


\begin{proof} An easy calculation shows that $ \delta_r^{-1} A_1 \delta_r  = A_r $ and hence we can assume that $ r =1.$  
With $p_\delta$ and $q_\delta$ as in the proof of theorem \ref{it}, we first show that \begin{align}
	\label{cor2}
	A_r:L^{p_\delta}(\Ha)\rightarrow L^{q_\delta}(\Ha).\end{align}
	Recall that from the Proposition \ref{lpr} we have  $$f\ast\phi_{1,\alpha}(x)=\frac{2}{\Gamma(\alpha)}\int_{0}^{1}(1-t^2 )^{\alpha-1}t^{Q-1}f\ast P_t(x)dt +T_{\alpha}f(x).$$ Note that $A_1$ is recovered from $f\ast \phi_{1,\alpha}$ by letting $\alpha$ tend to $ 0.$ As the first term converges to $ f\ast P_1(x) $ we have the equation
	$ A_1f(x) = f \ast P_1(x) +T_0 f(x).$
Since  $P_1\in L^p$ for any $p \geq 1$ in view of Young's inequality we get  $ \|f \ast P_1\|_{q_\delta} \leq C \|f\|_{p_\delta}.$ Hence using Proposition 2.7 we see that the same is true of $ A_1.$ Since $A_1 $ is bounded on $ L^p(\Ha) $ for any $ 1 \leq p \leq \infty $  Marcinkiewicz interpolation theorem shows $ \| A_1 f\|_q \leq \|f\|_p $ whenever $ (\frac{1}{p},\frac{1}{q}) $ belongs to the triangle $ \Delta_\delta $ with vertices at $ (0,0), (1,1) $ and $ (\frac{2n}{(1+\delta)(2n+1-\delta)}, \frac{1}{(2n+1-\delta)}).$ The theorem is proved by letting $ \delta $ go to $0.$
\end{proof}
\begin{figure}
	\includegraphics[scale=0.15]{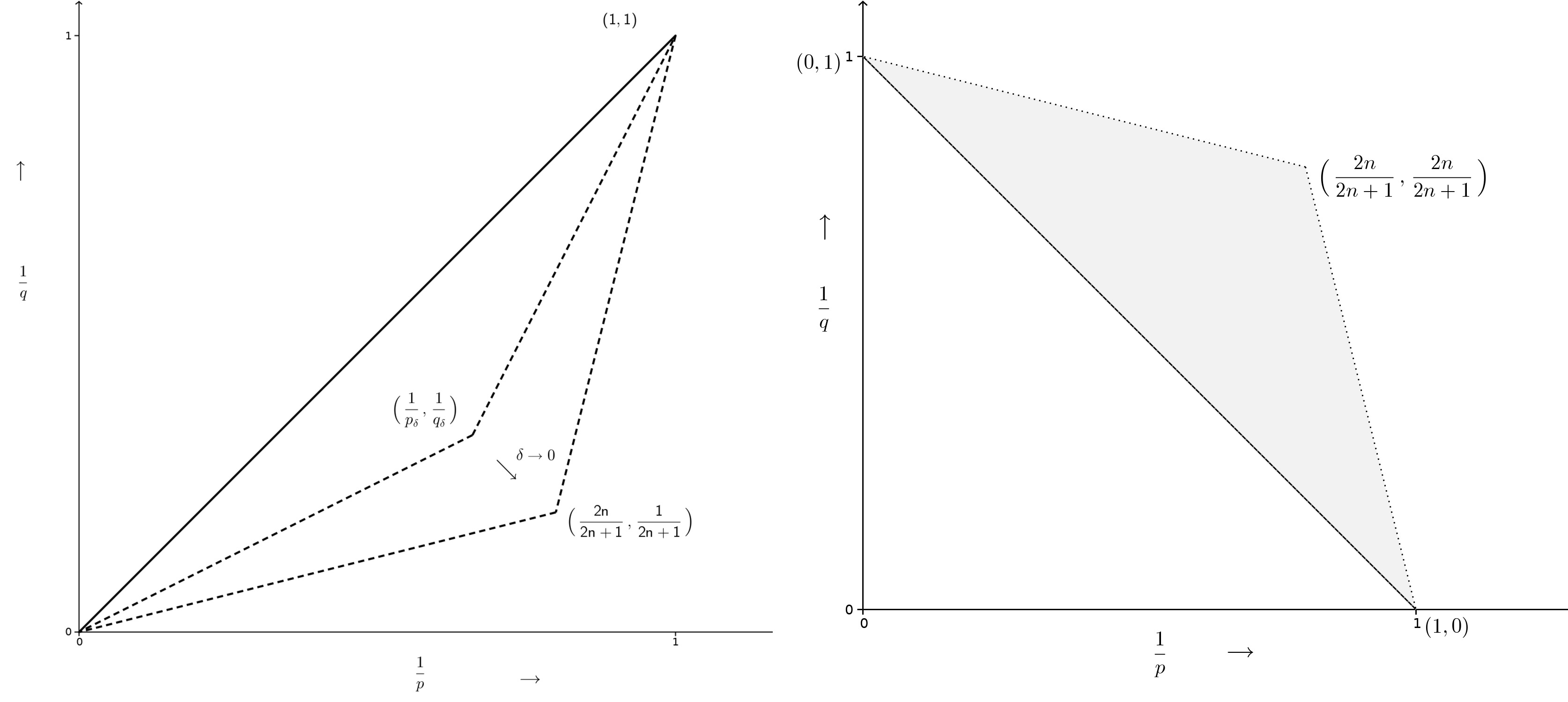}
	\caption{Triangle  $\mathbf{L}_n'$, on the left side, shows the region for $L^p-L^q$ estimates for $A_1$. The dual triangle $\mathbf{L}_n$ is on the right.}
	\label{figurea}
\end{figure}%

\begin{rem}
 It is possible to use the explicit formula for the measure $ \sigma_r $ stated  in the preliminaries  to get $L^p-L^q$ estimates for the spherical means. However, we only get the estimates  for $ (\frac{1}{p},\frac{1}{q}) $ coming from  a smaller triangle. 
\end{rem}

 \section{The continuity property}
 
 Our aim in this section is to prove the following theorem which is known as the continuity property of the spherical mean operator $ A_r.$   For $ {\bf{a}} \in \Ha $ let $ \tau_{\bf{a}}f ( x) = f(x \bf{a}^{-1}) $ be the right translation of $ f $ by $ \bf{a}.$ We prove:
 \begin{thm}
		Assume that $n\geq2$. Then for any ${\bf{a}} \in\Ha$ with $|{\bf{a}}|\leq r$ and for $(1/p,1/q)$ lying in the interior of the triangle joining the points $(0,0),(1,1)$ and $(\frac{2n}{2n+1},\frac{1}{2n+1})$, we have $$\|A_r-A_r\tau_{\bf{a}} \|_{L^p\rightarrow L^q}\leq C r^{Q(1/q-1/p)}  |\delta_{r}^{-1}{\bf{a}}|^{\eta}$$ for some $0<\eta<1.$
\end{thm}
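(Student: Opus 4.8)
\emph{Reduction and reformulation.} The plan is first to reduce to $r=1$: a direct computation with the dilations gives $A_r=\delta_r^{-1}A_1\delta_r$ and $\delta_r\tau_{\bf a}=\tau_{\delta_r^{-1}{\bf a}}\delta_r$, so $A_r-A_r\tau_{\bf a}=\delta_r^{-1}\bigl(A_1-A_1\tau_{\bf b}\bigr)\delta_r$ with ${\bf b}=\delta_r^{-1}{\bf a}$ (hence $|{\bf b}|\le 1$); since $\|\delta_r^{-1}S\delta_r\|_{L^p\to L^q}=r^{Q(1/q-1/p)}\|S\|_{L^p\to L^q}$ for any operator $S$, it suffices to prove $\|A_1f-A_1\tau_{\bf b}f\|_q\le C|{\bf b}|^{\eta}\|f\|_p$ for $|{\bf b}|\le 1$ and $(1/p,1/q)$ in the open triangle with vertices $(0,0),(1,1),\bigl(\tfrac{2n}{2n+1},\tfrac1{2n+1}\bigr)$. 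Next I would realise the difference as a convolution: the substitution $y\mapsto{\bf b}y$ shows $A_1\tau_{\bf b}f=f\ast({\bf b}\!\cdot\!\sigma)$, where ${\bf b}\!\cdot\!\sigma$ is the push-forward of $\sigma$ under the left translation $x\mapsto{\bf b}x$; hence $A_1f-A_1\tau_{\bf b}f=f\ast(\sigma-{\bf b}\!\cdot\!\sigma)$.

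\emph{Two endpoint bounds.} I would then interpolate between two estimates. At one endpoint, the $L^p$-improving theorem of Section 3 together with the fact that $\tau_{\bf b}$ is an isometry of every $L^p$ gives the trivial bound $\|A_1(I-\tau_{\bf b})\|_{L^{p_0}\to L^{q_0}}\le 2\|A_1\|_{L^{p_0}\to L^{q_0}}\le C$ for every $(1/p_0,1/q_0)$ in the open triangle, with no gain in ${\bf b}$. The other, and crucial, endpoint is an $L^2$-smoothing bound $\|A_1(I-\tau_{\bf b})\|_{L^2\to L^2}\le C|{\bf b}|^{\eta_0}$ for some $\eta_0\in(0,1)$. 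By the Plancherel theorem on $\Ha$ this amounts to $\sup_{\lambda\ne 0}\|(I-\pi_\lambda({\bf b}))\widehat\sigma(\lambda)\|\le C|{\bf b}|^{\eta_0}$, using that $\pi_\lambda$ is a homomorphism, so $\widehat{{\bf b}\!\cdot\!\sigma}(\lambda)=\pi_\lambda({\bf b})\widehat\sigma(\lambda)$. For this I would combine two bounds for $\|(I-\pi_\lambda({\bf b}))\widehat\sigma(\lambda)\|$. First, since $\widehat\sigma(\lambda)=\sum_k R_k(\lambda,\sigma)P_k(\lambda)$ (as $\sigma$ is radial) and hence $\|\widehat\sigma(\lambda)\|=\sup_k|R_k(\lambda,\sigma)|$, the Fourier decay estimate $|R_k(\lambda,\sigma)|\lesssim(1+(2k+n)|\lambda|)^{-\rho}$ for a suitable $\rho>0$ — this is where the hypothesis $n\ge 2$ is used — gives $\|(I-\pi_\lambda({\bf b}))\widehat\sigma(\lambda)\|\le 2\|\widehat\sigma(\lambda)\|\lesssim(1+|\lambda|)^{-\rho}$, the useful bound for large $|\lambda|$. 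Secondly, writing ${\bf b}=\exp V$, using $I-\pi_\lambda({\bf b})=-\int_0^1\pi_\lambda(\exp sV)\,d\pi_\lambda(V)\,ds$ and the unitarity of $\pi_\lambda(\exp sV)$, and using that $d\pi_\lambda(V)$ couples only Hermite eigenspaces whose indices differ by at most one and has operator norm $\lesssim|{\bf b}|\bigl((k+1)|\lambda|\bigr)^{1/2}+|{\bf b}|^2|\lambda|$ on the $k$-th one — the first term arising from the annihilation and creation operators $A_j(\lambda),A_j^*(\lambda)$ through $d\pi_\lambda(\widetilde Z_j)=iA_j(\lambda)$ and $d\pi_\lambda(\overline{\widetilde Z_j})=iA_j^*(\lambda)$, the second from $d\pi_\lambda(T)=i\lambda$ and $|t_0|\le|{\bf b}|^2/4$ — one obtains, for every integer $K\ge 0$ (by estimating the part of $\widehat\sigma(\lambda)$ on eigenspaces of index $\le K$ via $d\pi_\lambda(V)$ and the part on index $>K$ trivially),
$$\|(I-\pi_\lambda({\bf b}))\widehat\sigma(\lambda)\|\ \lesssim\ |{\bf b}|\bigl((K+1)|\lambda|\bigr)^{1/2}+|{\bf b}|^2|\lambda|+\bigl(1+(2K+n)|\lambda|\bigr)^{-\rho}.$$
Choosing $K$ so that $(2K+n)|\lambda|\sim|{\bf b}|^{-2/(2\rho+1)}$ when $|\lambda|\lesssim|{\bf b}|^{-2/(2\rho+1)}$, and using the first bound otherwise, yields the $L^2$-smoothing estimate with $\eta_0=2\rho/(2\rho+1)\in(0,1)$.

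\emph{Interpolation and the main difficulty.} Finally, given $(1/p,1/q)$ in the open triangle, since $(1/2,1/2)$ lies on its boundary edge $1/p=1/q$ one can pick $(1/p_0,1/q_0)$ in the open triangle with $(1/p,1/q)=(1-\theta)(1/2,1/2)+\theta(1/p_0,1/q_0)$ for some $\theta\in(0,1)$; Riesz--Thorin interpolation between the $L^2$-smoothing estimate and the trivial $L^{p_0}\to L^{q_0}$ estimate then gives $\|A_1(I-\tau_{\bf b})\|_{L^p\to L^q}\le C|{\bf b}|^{(1-\theta)\eta_0}$, so the theorem holds with $\eta=(1-\theta)\eta_0\in(0,1)$. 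I expect the main obstacle to be the uniform Fourier decay estimate $|R_k(\lambda,\sigma)|\lesssim(1+(2k+n)|\lambda|)^{-\rho}$: proving it requires a stationary-phase analysis of the oscillatory integral that represents $R_k(\lambda,\sigma_r)$ in terms of Laguerre functions, together with Laguerre-function asymptotics, and it is precisely here that the hypothesis $n\ge 2$ is used; the anisotropy of $\Ha$ — with the $t$-direction entering only through the quadratic factor $|{\bf b}|^2$ — is moreover what keeps $\eta$ strictly below $1$ along this route.
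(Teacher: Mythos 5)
Your outer reductions are sound and match the paper's: the dilation identity $A_r-A_r\tau_{\bf a}=\delta_r^{-1}(A_1-A_1\tau_{\bf b})\delta_r$ with ${\bf b}=\delta_r^{-1}{\bf a}$, the identification $A_1\tau_{\bf b}f=f\ast({\bf b}\cdot\sigma)$, and the final Riesz--Thorin interpolation between an $L^2\to L^2$ bound with gain $|{\bf b}|^{\eta_0}$ and the gain-free $L^{p_0}\to L^{q_0}$ bound from the $L^p$-improving theorem are all correct, and the paper itself reduces to the $p=q=2$, $r=1$ case in exactly this way. The problem is that your entire $L^2$ continuity estimate rests on the assertion $|R_k(\lambda,\sigma)|\lesssim (1+(2k+n)|\lambda|)^{-\rho}$ for some $\rho>0$, which you do not prove and which is precisely the hard part. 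The paper never establishes, or uses, any decay of the Laguerre coefficients $R_k(\lambda,\sigma)$ of the Koranyi sphere measure; on the contrary, it states in Section 2 that the integral representing $R_k(\lambda,\sigma_r)$ "cannot be evaluated in a closed form," and the Remark after Theorem 3.7 warns that arguments based on this explicit formula only reach a smaller range. Establishing your decay bound would require uniform (in $k$ and $\lambda$) Laguerre asymptotics combined with stationary phase in the $\theta$-integral of the Faraut--Harzallah formula, with careful treatment of the turning-point region of $\varphi_k^\lambda$ and of the endpoints $\theta=\pm\pi/2$; none of this is routine, and without it your argument does not close. (Your second ingredient — the bound $\|d\pi_\lambda(V)P_k(\lambda)\|\lesssim |{\bf b}|((2k+n)|\lambda|)^{1/2}+|{\bf b}|^2|\lambda|$ via creation/annihilation operators, and the optimization over the cutoff $K$ — is fine as stated, but it is useless without the decay input.)

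For contrast, the paper's route to the $L^2$ bound avoids $\hat\sigma(\lambda)$ entirely. It writes the difference $\tau_{\bf a}f\ast\Phi_{1,\alpha}-f\ast\Phi_{1,\alpha}$, for the analytic family $\Phi_{1,\alpha}(x)=\frac{4}{\Gamma(\alpha)}(1-|x|^4)_+^{\alpha-1}$, via the fundamental theorem of calculus as $\int_0^1\int_{\Ha}\tau_{\delta_s{\bf a}}f(xy^{-1})(a\cdot\widetilde X+b\cdot\widetilde Y+2csT)\Phi_{1,\alpha}(y)\,dy\,ds$, then substitutes the Cowling--Mauceri type representation $\sigma=P_1+\frac{1}{2\pi}\int d(Q,\gamma)I_\gamma\,d\gamma$ into the resulting kernels, continues analytically to $\alpha=0$, and bounds the operators $T_{0,j}$ on $L^2$ using the Cowling--Haagerup formula for $\widehat{|x|^{-Q+2s}}(\lambda)$ — a Fourier transform that, unlike $\hat\sigma(\lambda)$, is known in closed form. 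The hypothesis $n\ge 2$ enters there through the convergence of a $\gamma$-integral, not through Laguerre asymptotics. If you want to salvage your approach, you must either supply a complete proof of the decay of $R_k(\lambda,\sigma)$ or replace that step by the paper's kernel decomposition.
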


In view of the relation $ A_r-A_r\tau_{\bf{a}}  = \delta_r^{-1} (A_1- A_1 \tau_{\delta_r^{-1}\bf{a}})\delta_r $ we can assume $  r =1 $ without loss of generality. Also in view of Riesz-Thorin interpolation theorem it is enough to prove the case $ p =2.$  It is therefore natural to use the Plancherel theorem for the Fourier transform. Since we do not know the group Fourier transform of $ \sigma_r $ explicitly, we proceed as in the case of the proof of the $ L^p $  improving property by making use of the integral representation for the measure. For the purpose of proving continuity it is convenient to work with the following family rather than $ A_{r,\alpha} $ used earlier.

For $\Re(\alpha)>0$ and $r>0$ we define $$\Phi_{r,\alpha}(x):= 4 \frac{r^{-Q}}{\Gamma(\alpha)}\Big(1-\frac{|x|^4}{r^4}\Big)^{\alpha-1}_{+},\ x\in\Ha$$ 
which converges to $ \sigma_r $ as $ \alpha $ goes to $ 0.$  Given $ {\bf{a}} \in \Ha $ consider the equation
$$ \tau_{\bf{a}}f\ast \Phi_{r,\alpha}(x)-f\ast\Phi_{r,\alpha}(x) =
	\int_{\Ha}f(xy^{-1}{\bf{a}^{-1}})\Phi_{r,\alpha}(y)dy-\int_{\Ha}f(xy^{-1} )\Phi_{r,\alpha}(y)dy.$$
	By making a change of variables and using fundamental theorem of calculus we can rewrite the above as
	$$ \int_{\Ha}f(xy^{-1})\Phi_{r,\alpha}(({\bf{a}})y)dy-\int_{\Ha}f(xy^{-1} )\Phi_{r,\alpha}(y)dy = \int_{\Ha}\Big(\int_{0}^{1}\frac{d}{ds}\Phi_{r,\alpha}((\delta_s{\bf{a}})y)ds\Big) dy.$$
	We are thus led to calculate the derivative of $ \Phi_{r,\alpha}((\delta_s{\bf{a}})y)$ which is given in the following lemma.
	Let $$ X_j = \frac{\partial}{\partial{x_j}}+\frac{1}{2} y_j \frac{\partial}{\partial t}, \,\,Y_j = \frac{\partial}{\partial{y_j}}-\frac{1}{2} x_j \frac{\partial}{\partial t}, \,\, j = 1,2,..., n $$
	and $ T = \frac{\partial}{\partial t}$ be the $ (2n+1) $ left invariant vector fields giving an orthonormal basis for the Heisenberg Lie algebra $ \mathfrak{h}_n.$  It then follows that any $ (x,y,t) \in \Ha $ can be written as $  (x,y,t) = \exp( x\cdot X+y\cdot Y+t T) $ where $ X = (X_1,....,X_n) $ and $ Y = (Y_1,....,Y_n)$ and $ \exp $ is the exponential map taking $ \mathfrak{h}_n $ into $ \Ha.$ We can then check that
	$$  \frac{d}{ds}\big{|}_{s=0} \varphi(\exp(\delta_s(a,b,c)(x,y,t))) = ( a\cdot \widetilde{X} + b \cdot \widetilde{Y}+c T)\varphi(x,y,t) $$
	where  $\widetilde{X} = (\widetilde{X}_1,....,\widetilde{X}_n) $ and $ \widetilde{Y} = (\widetilde{Y}_1,....,\widetilde{Y}_n),$ with
	$$ \widetilde{X}_j = \frac{\partial}{\partial{x_j}}-\frac{1}{2} y_j \frac{\partial}{\partial t}, \,\,\widetilde{Y}_j = \frac{\partial}{\partial{y_j}}+\frac{1}{2} x_j \frac{\partial}{\partial t}.$$ We remark that these are the right invariant vector fields agreeing with the left invariant ones at the origin.

\begin{lem}
	\label{crep}
	Let $  {\bf{a}} =(a,b,c) \in\Ha$ be fixed. Then for any $r>0$ and $\Re(\alpha)>1$ we have  	
	$$ \tau_{\bf{a}}f\ast\Phi_{r,\alpha}(x)-f\ast\Phi_{r,\alpha}(x)= \int_0^1  \Big(\int_{\Ha} \tau_{\delta_s{\bf{a}}}f(xy^{-1})( a\cdot \widetilde{X} + b \cdot \widetilde{Y}+2csT)\Phi_{r,\alpha}(y)dy \Big) ds$$ 

\end{lem}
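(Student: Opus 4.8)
The plan is to start from the identity derived just above the statement, namely that
$$
\tau_{\bf a}f\ast\Phi_{r,\alpha}(x)-f\ast\Phi_{r,\alpha}(x)
=\int_{\Ha}f(xy^{-1})\big(\Phi_{r,\alpha}(({\bf a})y)-\Phi_{r,\alpha}(y)\big)\,dy,
$$
and to rewrite the difference $\Phi_{r,\alpha}(({\bf a})y)-\Phi_{r,\alpha}(y)$ using the fundamental theorem of calculus applied to the smooth (since $\Re(\alpha)>1$, the integrand $(1-|x|^4/r^4)_+^{\alpha-1}$ is $C^1$) function $s\mapsto \Phi_{r,\alpha}((\delta_s{\bf a})y)$. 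Here one must be a little careful: the path $s\mapsto\delta_s{\bf a}$ starts at the identity when $s=0$ and ends at ${\bf a}$ when $s=1$, so $\Phi_{r,\alpha}((\delta_1{\bf a})y)-\Phi_{r,\alpha}((\delta_0{\bf a})y)=\int_0^1\frac{d}{ds}\Phi_{r,\alpha}((\delta_s{\bf a})y)\,ds$ is exactly the difference we want.

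The core computation is the derivative $\frac{d}{ds}\Phi_{r,\alpha}((\delta_s{\bf a})y)$. First I would record the chain-rule fact stated in the lemma's preamble: for a smooth function $\varphi$ on $\Ha$,
$$
\frac{d}{ds}\Big|_{s=0}\varphi\big(\exp(\delta_s(a,b,c))\,\exp(\text{coords of }y)\big)
=(a\cdot\widetilde X+b\cdot\widetilde Y+cT)\varphi(y).
$$
To get the derivative at a general $s=s_0$ rather than only at $s=0$, I would use the one-parameter dilation structure $\delta_{s_0+h}{\bf a}=\delta_{s_0}(\delta_{1+h/s_0}{\bf a})$ together with the fact that differentiating at $s_0$ picks up, via the non-isotropic scaling $\delta_r(z,t)=(rz,r^2t)$, the vector field $a\cdot\widetilde X+b\cdot\widetilde Y+2cs_0 T$ — the factor $2s_0$ appearing because the $t$-component of $\delta_s{\bf a}$ is $s^2c$, whose $s$-derivative is $2sc$, whereas the $z$-component is $sa,sb$ with $s$-derivative $a,b$. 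A clean way to organize this: write $\frac{d}{ds}\big|_{s=s_0}\Phi_{r,\alpha}((\delta_s{\bf a})y)=\frac{d}{dh}\big|_{h=0}\Phi_{r,\alpha}((\delta_{s_0+h}{\bf a})y)$, change variables so the moving point sits at the identity, and read off the left-translation-invariant differential operator, which becomes the right-invariant field $a\cdot\widetilde X+b\cdot\widetilde Y+2cs_0T$ acting in the $y$ variable because the perturbation $\delta_{s_0}{\bf a}$ multiplies $y$ on the left. This is the step I expect to be the main technical obstacle — keeping track of the left/right placement and the $2s$ factor correctly; everything else is bookkeeping.

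Once the derivative formula $\frac{d}{ds}\Phi_{r,\alpha}((\delta_s{\bf a})y)=(a\cdot\widetilde X+b\cdot\widetilde Y+2csT)\Phi_{r,\alpha}\big|_{(\delta_s{\bf a})y}$ is in hand, I would substitute it into the integral representation, apply Fubini's theorem (justified since $\Re(\alpha)>1$ makes the derivative of $\Phi_{r,\alpha}$ integrable and $f$ may be taken Schwartz), and change variables $y\mapsto(\delta_s{\bf a})y$ inside the $y$-integral so that the left translate gets absorbed into $f$, turning $f(xy^{-1})$ into $f(x((\delta_s{\bf a})y)^{-1})=f(xy^{-1}(\delta_s{\bf a})^{-1})=\tau_{\delta_s{\bf a}}f(xy^{-1})$. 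This yields exactly
$$
\tau_{\bf a}f\ast\Phi_{r,\alpha}(x)-f\ast\Phi_{r,\alpha}(x)
=\int_0^1\Big(\int_{\Ha}\tau_{\delta_s{\bf a}}f(xy^{-1})\,(a\cdot\widetilde X+b\cdot\widetilde Y+2csT)\Phi_{r,\alpha}(y)\,dy\Big)ds,
$$
which is the claimed identity, completing the proof.
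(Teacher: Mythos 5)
Your argument is correct and follows essentially the same route as the paper: transfer the translation onto $\Phi_{r,\alpha}$ by a change of variables, apply the fundamental theorem of calculus along $s\mapsto\delta_s{\bf{a}}$, identify the $s$-derivative as the right-invariant field $a\cdot\widetilde{X}+b\cdot\widetilde{Y}+2csT$ (the factor $2cs$ coming from the parabolic scaling of the central variable), and change variables back to absorb the left factor into $f$ as $\tau_{\delta_s{\bf{a}}}f$. The only stylistic difference is in the derivative-at-general-$s$ step, which the paper carries out via the exact group factorization $\delta_{s+u}{\bf{a}}=(ua,ub,(u^2+2su)c)\cdot(\delta_s{\bf{a}})$ --- a clean way to implement precisely the point you flagged as the main technical obstacle.
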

\begin{proof} In view of the foregoing calculations we have
$$ \tau_{\bf{a}}f\ast \Phi_{r,\alpha}(x)-f\ast\Phi_{r,\alpha}(x) =\int_{\Ha}  f(xy^{-1})\Big(\int_{0}^{1}\frac{d}{ds}\Phi_{r,\alpha}((\delta_s{\bf{a}})y)ds\Big) dy.$$ We note that
$$ \frac{d}{ds}\Phi_{r,\alpha}((\delta_s{\bf{a}})y)= \frac{d}{du}\big{|}_{u=0}\Phi_{r,\alpha}((\delta_{s+u}{\bf{a}})y), $$
$$ (\delta_{s+u}{\bf{a}})y= (ua,ub,(u^2+2su)c)(sa,sb,s^2c)y$$ 
which can also be written in the form  $$ \exp\big(u(a\cdot X+ b\cdot Y)+(u^2+2su)cT\big)(\delta_s{\bf{a}})y.$$
Therefore, taking derivative with respect to $ u $ at $ 0 $ and making use of the calculations done before the lemma we get the result.
\end{proof}

In order to obtain a usable expression for  $( a\cdot \widetilde{X} + b \cdot \widetilde{Y}+2csT)\Phi_{r,\alpha}(y)$ we make use of the following simple lemma.

\begin{lem} Let $ \varphi_0 $ and $ \psi_0$ be smooth functions on $ \R $ and let $ \varphi(x) = \varphi_0(|x|^4) $ and $ \psi(x) = \psi_0(|x|^4) .$ Then for any vector field $ Z $ on $ \Ha$ we have
$ Z\varphi(x) = \frac{\varphi_0^\prime(|x|^4)}{ \psi_0^\prime(|x|^4)}Z\psi(x).$ In particular, 
$$ Z(|x| I_{\gamma}(x)) = \frac{1}{4}(-Q+1+i\gamma) |x|^{-3} Z(|x|^4) I_{\gamma}(x) .$$
\end{lem}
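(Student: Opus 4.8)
The plan is to exploit the fact that any vector field $Z$ on $\Ha$ is a first-order differential operator, hence a derivation, so that the ordinary chain rule applies to compositions with smooth functions of one real variable. First I would record that $x \mapsto |x|^4 = |z|^4 + 16t^2$ (writing $x=(z,t)$) is a polynomial, in particular $C^\infty$ on all of $\Ha$, so that $Z(|x|^4)$ is a perfectly well-defined smooth function; the functions $|x|$ and $I_\gamma$ themselves are only smooth away from the origin, so throughout the argument I would work on $\Ha \setminus \{0\}$, which is all that is needed for the later application. Applying the chain rule to $\varphi(x) = \varphi_0(|x|^4)$ gives $Z\varphi(x) = \varphi_0^\prime(|x|^4)\, Z(|x|^4)$, and in exactly the same way $Z\psi(x) = \psi_0^\prime(|x|^4)\, Z(|x|^4)$. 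Dividing these two identities — understood as an identity of smooth functions on the open set where $\psi_0^\prime(|x|^4) \neq 0$ — yields the first displayed formula $Z\varphi(x) = \dfrac{\varphi_0^\prime(|x|^4)}{\psi_0^\prime(|x|^4)} Z\psi(x)$.

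For the particular case I would simply specialize. Since $I_\gamma(x) = C(Q,\gamma)\,|x|^{-Q+i\gamma}$, for $x \neq 0$ we have $|x|\,I_\gamma(x) = C(Q,\gamma)\,|x|^{-Q+1+i\gamma} = C(Q,\gamma)\,(|x|^4)^{\frac{-Q+1+i\gamma}{4}}$, so $|x|\,I_\gamma(x) = \varphi_0(|x|^4)$ with $\varphi_0(s) = C(Q,\gamma)\, s^{\frac{-Q+1+i\gamma}{4}}$. Then $\varphi_0^\prime(s) = \tfrac14(-Q+1+i\gamma)\,C(Q,\gamma)\, s^{\frac{-Q+1+i\gamma}{4}-1}$, and the chain-rule identity above gives $Z(|x|\,I_\gamma(x)) = \varphi_0^\prime(|x|^4)\,Z(|x|^4) = \tfrac14(-Q+1+i\gamma)\,C(Q,\gamma)\,(|x|^4)^{\frac{-Q+1+i\gamma}{4}-1}\,Z(|x|^4)$. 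Finally I would observe $(|x|^4)^{\frac{-Q+1+i\gamma}{4}-1} = |x|^{-Q+1+i\gamma}\,|x|^{-4} = |x|^{-3}\,|x|^{-Q+i\gamma}$, so that $C(Q,\gamma)(|x|^4)^{\frac{-Q+1+i\gamma}{4}-1} = |x|^{-3}\,I_\gamma(x)$, which produces exactly $Z(|x|\,I_\gamma(x)) = \tfrac14(-Q+1+i\gamma)\,|x|^{-3}\,Z(|x|^4)\,I_\gamma(x)$.

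There is no substantive obstacle here: the lemma is a bookkeeping device whose whole point is to isolate the only $x$-dependence produced when a vector field acts on a Koranyi-radial function, namely the single scalar function $Z(|x|^4)$. The only points needing a word of care — and they are genuinely minor — are (i) restricting to $x\neq 0$, where $|x|$ and $I_\gamma$ are smooth, and (ii) reading the quotient in the first identity formally, as an identity valid on the locus $\{\psi_0^\prime(|x|^4)\neq 0\}$; in the intended application one takes $\psi_0(s)=s^{1/4}$, i.e. $\psi(x)=|x|$, for which $\psi_0^\prime(s)=\tfrac14 s^{-3/4}\neq 0$ for all $s>0$, so this locus is all of $\Ha\setminus\{0\}$.
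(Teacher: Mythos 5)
Your proof is correct and is exactly the intended argument: the paper states this as a ``simple lemma'' without proof, and the chain rule for the derivation $Z$ applied to $\varphi_0(|x|^4)$ (with $|x|^4=|z|^4+16t^2$ a polynomial, hence globally smooth), together with the specialization $\varphi_0(s)=C(Q,\gamma)\,s^{(-Q+1+i\gamma)/4}$ away from the origin, is all that is needed. The only (inconsequential) slip is in your closing remark: since the displayed formula features $Z(|x|^4)$, the relevant choice there is $\psi_0(s)=s$, so $\psi(x)=|x|^4$ and $\psi_0'\equiv 1$, not $\psi_0(s)=s^{1/4}$ --- which is also how the paper subsequently applies the lemma, taking $\psi_0(u)=u$.
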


Taking $ \varphi_0(u) = \frac{4r^{-Q} }{\Gamma(\alpha-1)}(1-\frac{u^4}{r^4})_+^{\alpha-1} $ and $ \psi_0(u) = u $ in the above lemma we get the following.
\begin{lem} Assuming that $ \Re{(\alpha)} > 1$ we have 
$$( a\cdot \widetilde{X} + b \cdot \widetilde{Y}+2csT)\Phi_{r,\alpha}(y) = c r^{-3}  \Big(2cs \psi_0(x)+ \sum_{j=1}^n (a_j \varphi_j(x)+b_j \psi_j(x)) \Big) \Phi_{r,\alpha-1}(x) $$
where $ \varphi_j $ and $ \psi_j $ are homogeneous of degree three whereas $ \psi_0$ is of degree two.
\end{lem}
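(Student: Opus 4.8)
The plan is to treat this as a direct application of the preceding lemma followed by an explicit chain-rule computation. First I would write $\Phi_{r,\alpha}(x)=\varphi_0(|x|^4)$ with $\varphi_0(u)=4\,\frac{r^{-Q}}{\Gamma(\alpha)}(1-u/r^4)_+^{\alpha-1}$, and take $\psi_0(u)=u$ so that $\psi(x)=|x|^4$. For $\Re(\alpha)>1$ the function $\varphi_0$ is absolutely continuous on $[0,\infty)$ --- its classical derivative blows up like $(r^4-u)^{\alpha-2}$ near $u=r^4$, which is integrable exactly when $\Re(\alpha)>1$ --- so the preceding lemma applies to each of the right invariant vector fields $\widetilde{X}_j,\widetilde{Y}_j,T$ and gives $Z\Phi_{r,\alpha}(x)=\varphi_0'(|x|^4)\,Z(|x|^4)$. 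Using $\tfrac{d}{du}(1-u/r^4)^{\alpha-1}=-(\alpha-1)r^{-4}(1-u/r^4)^{\alpha-2}$ together with $\Gamma(\alpha)=(\alpha-1)\Gamma(\alpha-1)$ one obtains $\varphi_0'(|x|^4)=-r^{-4}\,\Phi_{r,\alpha-1}(x)$, and $\Phi_{r,\alpha-1}$ is an honest locally integrable function precisely because $\Re(\alpha-1)>0$.

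The second step is the computation of $Z(|x|^4)$ for the three vector fields, starting from $|x|^4=|z|^4+16t^2$ with $|z|^2=\sum_j(x_j^2+y_j^2)$ and $\widetilde{X}_j=\partial_{x_j}-\tfrac12 y_j\partial_t$, $\widetilde{Y}_j=\partial_{y_j}+\tfrac12 x_j\partial_t$, $T=\partial_t$. One finds $\widetilde{X}_j(|x|^4)=4|z|^2x_j-16ty_j$ and $\widetilde{Y}_j(|x|^4)=4|z|^2y_j+16tx_j$ --- polynomials homogeneous of degree $3$ under the parabolic dilations --- together with $T(|x|^4)=32t$, homogeneous of degree $2$. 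Assembling, one gets
\[
(a\cdot\widetilde{X}+b\cdot\widetilde{Y}+2csT)\Phi_{r,\alpha}(x)=-r^{-4}\Phi_{r,\alpha-1}(x)\Bigl(2cs\cdot 32t+\sum_{j=1}^n a_j\bigl(4|z|^2x_j-16ty_j\bigr)+\sum_{j=1}^n b_j\bigl(4|z|^2y_j+16tx_j\bigr)\Bigr),
\]
and reading off $\psi_0$ from the $t$-term and $\varphi_j,\psi_j$ from the $a_j$- and $b_j$-terms (up to the normalizing constant and power of $r$ recorded in the statement) yields the claimed identity with the asserted homogeneities.

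There is no substantive obstacle here. The only points requiring a little care are: (i) justifying the chain rule in the distributional sense in the range $1<\Re(\alpha)\le 2$, where $\varphi_0$ is merely H\"older continuous --- this is exactly where the hypothesis $\Re(\alpha)>1$ is used, being precisely the condition under which $\Phi_{r,\alpha-1}$ is a genuine function and the right-hand side makes classical sense; and (ii) keeping the homogeneity bookkeeping and the constants straight so that $\varphi_j,\psi_j$ turn out homogeneous of degree three and $\psi_0$ of degree two.
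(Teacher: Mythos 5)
Your proof is correct and follows essentially the same route as the paper: apply the preceding chain‑rule lemma with $\varphi_0(u)=4\frac{r^{-Q}}{\Gamma(\alpha)}(1-u/r^4)_+^{\alpha-1}$ and $\psi_0(u)=u$, use $\Gamma(\alpha)=(\alpha-1)\Gamma(\alpha-1)$ to identify $\varphi_0'(|x|^4)$ with a multiple of $\Phi_{r,\alpha-1}$, and read off $\varphi_j=\widetilde{X}_j(|x|^4)$, $\psi_j=\widetilde{Y}_j(|x|^4)$, $\psi_0=T(|x|^4)$ with the stated homogeneities. You in fact supply more detail than the paper (the explicit polynomials $4|z|^2x_j-16ty_j$, etc., and the justification of why $\Re(\alpha)>1$ makes the chain rule legitimate), and your normalizing constant $-r^{-4}$ is the natural one, the paper's $cr^{-3}$ being an inessential (and apparently typographical) variant.
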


Indeed, we only have to take 
$$ \varphi_j(x) =  \widetilde{X}_j(|x|^4),\,\, \psi_j(x) =  \widetilde{Y}_j(|x|^4),\,\,\psi_0(x) = T(|x|^4)$$
and note that as $ \widetilde{X}_j $ and $\widetilde{Y}_j $ are homogeneous of degree one and $ T $ is homogeneous of degree two these functions have the right homogeneity stated in the lemma.

We want to estimate the norm of $ A_r\tau_{{\bf{a}}}f- A_rf $   which will be obtained as the limit of $ \tau_{\bf{a}}f\ast\Phi_{r,\alpha}(x)-f\ast\Phi_{r,\alpha}(x)$ as $ \alpha $ goes to $0.$ Since the expression in the above lemma is proved under the assumption that $ \Re(\alpha) > 1$ we need to holomorphically extend the integral
$$  \int_{\Ha} g(xy^{-1}) \Phi_{r,\alpha-1}(y) ( a\cdot \widetilde{X} + b \cdot \widetilde{Y}+2csT)(|y|^4)\,\, dy $$ for smaller values of $ \alpha$ before passing to the limit. Assuming $ r =1 $ we consider the operator
$$    \int_{\Ha} f(xy^{-1})   \Phi_{1,\alpha-1}(y)\varphi_j(y) dy.$$ We now obtain the following representation for the above integral which allows holomorphic extension.

\begin{prop} For any $ \alpha $ with $ \Re{(\alpha)} > 1 $ we have
$$ \int_{\Ha}  f (xy^{-1})  \Phi_{1,\alpha-1}(y) \varphi_j(y) dy = S_{\alpha,j}f(x)+T_{\alpha,j}f(x).$$
Here $ T_{\alpha,j} $ is entire and $ S_{\alpha,j}$ has a  holomorphic extension to the region $ \Re{(\alpha)} > -\frac{Q-1}{4}+1.$

\end{prop}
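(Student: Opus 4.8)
The plan is to imitate the proof of Proposition~\ref{lpr}, the one genuinely new ingredient being a twisted analogue of the integral representation $\sigma_t = P_t + \frac{1}{2\pi}\int d(Q,\gamma)t^{-i\gamma}I_\gamma\,d\gamma$ established above, now for the signed finite measure $\nu_j$ on $S_K$ given by $d\nu_j(\om) = \varphi_j(\om)\,d\sigma(\om)$. First I would pass to polar coordinates in the Koranyi norm: since $\Phi_{1,\alpha-1}$ is radial and $\varphi_j$ is homogeneous of degree three, so that $\varphi_j(\delta_t\om) = t^3\varphi_j(\om)$ for $\om\in S_K$, the polar decomposition of the Haar measure yields
\[ \int_{\Ha} f(xy^{-1})\,\Phi_{1,\alpha-1}(y)\,\varphi_j(y)\,dy = \frac{4}{\Gamma(\alpha-1)}\int_0^1 (1-t^4)^{\alpha-2}\, t^{Q+2}\, f\ast\nu_{j,t}(x)\,dt, \qquad \Re(\alpha)>1, \]
where $\nu_{j,t} = \delta_t\nu_j$.

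The heart of the argument is the representation, valid in the sense of distributions,
\[ \nu_j = \mathcal P_j + \frac{1}{2\pi}\int_{-\infty}^{\infty} d_j(Q,\gamma)\,\mathcal I_{\gamma,j}\,d\gamma, \]
where $\mathcal I_{\gamma,j}$ is the tempered distribution given by the function $C_j(Q,\gamma)\,\varphi_j(x)\,|x|^{-Q-3+i\gamma}$ (homogeneous of degree $-Q+i\gamma$ and smooth away from the origin), $\mathcal P_j(x) = c_j\,\varphi_j(x)(1+|x|^4)^{-\beta}$ is a Poisson-type kernel for a suitable exponent $\beta$, and $d_j(Q,\gamma)$ decays polynomially in $\gamma$ and, crucially, vanishes at $\gamma = 0$. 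This is proved word for word as the representation of $\sigma$ above: testing against $u\in C_c^\infty$ supported in a neighbourhood of $S_K$ and using the polar decomposition of the Haar measure gives $\int_{\Ha} u(x)\varphi_j(x)|x|^{-Q-3+i\gamma}\,dx = \int_0^\infty \bar u_j(r)\,r^{i\gamma-1}\,dr$ with $\bar u_j(r) = \int_{S_K} u(\delta_r\om)\varphi_j(\om)\,d\sigma(\om)$, and the Fourier inversion formula on $\R$ then gives $\int_{-\infty}^{\infty}(\cdots)\,d\gamma = 2\pi\langle\nu_j, u\rangle$. The main term $\mathcal P_j$ is subtracted off exactly as in Lemma~\ref{lm1}: its Mellin symbol is an explicit ratio of Gamma functions (a Beta integral), and one sets $d_j(Q,\gamma)C_j(Q,\gamma)$ equal to $1$ minus that symbol, which forces the zero at $\gamma=0$ and thereby tames the critical homogeneity of $\mathcal I_{\gamma,j}$ there. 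Nothing in this argument uses positivity or $U(n)$-invariance of $\sigma$, only the polar decomposition.

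Substituting this representation into the displayed identity and using $\delta_t\mathcal I_{\gamma,j} = t^{-i\gamma}\mathcal I_{\gamma,j}$ produces the splitting into $S_{\alpha,j}f + T_{\alpha,j}f$ with
\[ S_{\alpha,j}f(x) = \frac{4}{\Gamma(\alpha-1)}\int_0^1 (1-t^4)^{\alpha-2}\, t^{Q+2}\, f\ast\mathcal P_{j,t}(x)\,dt, \]
while the $\gamma$-part, after the inner integral is evaluated as a Beta integral, $\int_0^1 (1-t^4)^{\alpha-2} t^{Q+2-i\gamma}\,dt = \tfrac14 B\bigl(\alpha-1,\tfrac{Q+3-i\gamma}{4}\bigr)$, and the factor $\Gamma(\alpha-1)$ is cancelled, becomes
\[ T_{\alpha,j}f(x) = \frac{1}{2\pi}\int_{-\infty}^{\infty} d_j(Q,\gamma)\,\frac{\Gamma\bigl(\tfrac{Q+3-i\gamma}{4}\bigr)}{\Gamma\bigl(\alpha-1+\tfrac{Q+3-i\gamma}{4}\bigr)}\, f\ast\mathcal I_{\gamma,j}(x)\,d\gamma. \]
Since $z\mapsto 1/\Gamma(z)$ is entire, $T_{\alpha,j}$ is entire in $\alpha$; the $\gamma$-integral converges for every $\alpha$ because the polynomial decay of $d_j$ together with the exponential decay of the operator norm of $f\mapsto f\ast\mathcal I_{\gamma,j}$ (read off from the group Fourier transform of $\mathcal I_{\gamma,j}$, a function of the Hermite operator with Gamma-function coefficients, computed as in \cite{CH}) beats the $e^{\pi|\gamma|/8}$ growth of the reciprocal Gamma factor. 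For $S_{\alpha,j}$, the substitution $u = t^4$ rewrites it as $\Gamma(\alpha-1)^{-1}\int_0^1 (1-u)^{\alpha-2} u^{(Q-1)/4} g_j(u)\,du$ with $g_j(u) = f\ast\mathcal P_{j,u^{1/4}}(x)$; the standard meromorphic continuation of this Beta-type integral has at most simple poles at $\alpha = 1, 0, -1, \dots$, all annihilated by the zeros of $1/\Gamma(\alpha-1)$, and a careful analysis of the $t\to 0$ endpoint (where $f\ast\mathcal P_{j,t}$ has a singular expansion governed by the decay of $\mathcal P_j$) shows that the extension is holomorphic on $\Re(\alpha) > -\tfrac{Q-1}{4}+1$.

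The step I expect to be the main obstacle is the twisted representation of $\nu_j$ in the second paragraph: choosing the exponent $\beta$ in the Poisson-type kernel $\mathcal P_j$ so that the remainder coefficient $d_j(Q,\gamma)$ simultaneously vanishes at $\gamma = 0$ and decays fast enough in $\gamma$ for all the subsequent $\gamma$-integrations (here, and in the continuity estimate of Section~4) to converge; once this is in place, the rest is the same Beta-integral bookkeeping as in Proposition~\ref{lpr}.
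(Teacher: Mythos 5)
Your proposal is correct in substance and reaches the same decomposition, the same Beta-integral evaluation $\int_0^1(1-t^4)^{\alpha-2}t^{Q+2-i\gamma}\,dt=\tfrac14\Gamma(\alpha-1)\Gamma(\tfrac{Q+3-i\gamma}{4})/\Gamma(\alpha-1+\tfrac{Q+3-i\gamma}{4})$, and the same mechanism for extending $S_{\alpha,j}$ (poles at $u=1$ killed by $1/\Gamma(\alpha-1)$, the range $\Re(\alpha)>1-\tfrac{Q-1}{4}$ dictated by the $t\to0$ endpoint, which the paper implements by repeated explicit integration by parts in $u$). The one place you diverge is precisely the step you flag as the main obstacle: the integral representation of the twisted measure $\varphi_j\,d\sigma$. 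You re-run the entire Mellin/Poisson construction from scratch, introducing a new kernel $c_j\varphi_j(x)(1+|x|^4)^{-\beta}$, a new normalizing constant $C_j(Q,\gamma)$, and a new coefficient $d_j(Q,\gamma)$ forced to vanish at $\gamma=0$. This works (nothing in the proof of the representation of $\sigma$ uses positivity or $U(n)$-invariance, as you note), but it is unnecessary: since $\varphi_j(x)|x|^{-3}$ is homogeneous of degree zero and restricts to $\varphi_j$ on $S_K$, the paper simply multiplies the already-established identity $\sigma_u=P_u+\tfrac{1}{2\pi}\int d(Q,\gamma)u^{-i\gamma}I_\gamma\,d\gamma$ by this function, obtaining $P_{u,j}=P_u\varphi_j|\cdot|^{-3}$ and $I_{\gamma,j}=I_\gamma\varphi_j|\cdot|^{-3}$ with the \emph{same} coefficient $d(Q,\gamma)$. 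Besides being shorter, this choice matters downstream: the $L^2$ bound for $T_{0,j}$ in the next proposition exploits the specific identity $d(Q,\gamma)C(Q,\gamma)=a(Q,\gamma)$ and the formula $I_{\gamma,j}=\tfrac{4C(Q,\gamma)}{-Q+1+i\gamma}\widetilde X_j(|x|^{-Q+1+i\gamma})$, so with your new constants all of that bookkeeping would have to be redone (it would still go through, since $d_jC_j$ is bounded and your $\mathcal I_{\gamma,j}$ agrees with $I_{\gamma,j}$ up to a nonvanishing scalar). One small correction: the ratio $\Gamma(\tfrac{Q+3-i\gamma}{4})/\Gamma(\alpha-1+\tfrac{Q+3-i\gamma}{4})$ grows only polynomially in $\gamma$ (both Gamma factors carry the same imaginary part, so the exponential decays cancel); the convergence of the $\gamma$-integral therefore rests on the decay of $|d(Q,\gamma)|$ times the operator norm of $f\mapsto f\ast I_{\gamma,j}$, not on beating an exponential factor $e^{\pi|\gamma|/8}$.
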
  
	
\begin{proof}	Integrating in polar coordinates we have
	\begin{align*}
	\int_{\Ha}&f(xy^{-1})\Phi_{1,\alpha-1}(y)\varphi_j(y)dy\\ &=\frac{4}{\Gamma(\alpha-1)}\int_{0}^{1}\int_{S_K}f(x.\delta_{u}\om^{-1})\big(1-u^4\big)^{\alpha-2} u^{Q+2} \varphi_j(\om) d\sigma(\om)du\\ 
	&= \frac{4}{\Gamma(\alpha-1)}\int_{0}^{1}\big(1-u^4\big)^{\alpha-2}u^{Q+2} f\ast (\varphi_j\sigma)_u (x)du
	\end{align*}
	Making  use of the representation of the representation of $ \sigma_u $ given in Theorem 2.2, we have
	$$ f \ast (\varphi_j\sigma)_u(x)= f \ast P_{u,j}(x)+ \frac{1}{2\pi} \int_{-\infty}^{\infty}d(Q,\gamma)u^{-i\gamma} f\ast I_{\gamma,j}(x)d\gamma $$
	where $ P_{u,j}(x) =P_u(x) \varphi_j(x) |x|^{-3} $ and $ I_{\gamma,j}(x) =I_\gamma(x) \varphi_j(x) |x|^{-3} .$ Using this expression we have
	$$ \int_{\Ha}f(xy^{-1})\Phi_{1,\alpha-1}(y)\varphi_j(y)dy = S_{\alpha,j}f(x) + T_{\alpha,j}f(x) $$
	where $S_{\alpha,j}f$ is given by
	$$ S_{\alpha,j}f(x) = \frac{4}{\Gamma(\alpha-1)}\int_{0}^{1}\big(1-u^4\big)^{\alpha-2}u^{Q+2} f\ast P_{u,j}(x)du$$
	and $T_{\alpha,j}f$ by the integral
	$$T_{\alpha,j}f(x) = \frac{4}{\Gamma(\alpha-1)}\int_{0}^{1}\big(1-u^4\big)^{\alpha-2}u^{Q+2} \Big( \frac{1}{2\pi} \int_{-\infty}^{\infty}d(Q,\gamma)u^{-i\gamma} 
	f\ast I_{\gamma,j}(x)d\gamma \Big) du.$$
Holomorphic extension of $ S_{\alpha,j}f $ is a routine matter: we integrate by parts by writing
$$ S_{\alpha,j}f(x) = \frac{1}{\Gamma(\alpha)}\int_{0}^{1} u^{Q-1} f\ast P_{u,j}(x)  \frac{d}{du} \big(1-u^4\big)^{\alpha-1} du.$$
Under the assumption that $ \Re(\alpha) > 1$ the above leads to the formula
$$ S_{\alpha,j}f(x) = \frac{1}{\Gamma(\alpha)}\int_{0}^{1} \big(1-u^4\big)^{\alpha-1}   \frac{d}{du} \big(u^{Q-1} f\ast P_{u,j}(x)\big) du.$$
The above procedure can be repeated as long as $ \Re{(\alpha)} > -\frac{Q-1}{4}+1.$ For example, one more integration by parts  gives 
$$ S_{\alpha,j}f(x) = \frac{1}{ 4\Gamma(\alpha +1)}\int_{0}^{1} \big(1-u^4\big)^{\alpha}  \frac{d}{du} \Big( u^{-3} \frac{d}{du} \big(u^{Q-1} f\ast P_{u,j}(x)\big) \Big) du.$$
valid for for $ \Re(\alpha) > 0 .$ Having taken care of $ S_{\alpha,j}f $ we now turn our attention towards the other term, namely $ T_{\alpha,j}f.$
Interchanging the order of integration in the defining integral of $ T_{\alpha,j} $ we get
$$T_{\alpha,j}f(x) =  \frac{1}{2\pi} \int_{-\infty}^{\infty}d(Q,\gamma) \Big(\frac{4}{\Gamma(\alpha-1)}\int_{0}^{1}\big(1-u^4\big)^{\alpha-2}u^{Q+2} u^{-i\gamma} 
	 du \Big) f\ast I_{\gamma,j}(x)d\gamma .$$
A simple calculation shows that
$$\frac{4}{\Gamma(\alpha-1)}\int_{0}^{1}\big(1-u^4\big)^{\alpha-2}u^{Q+2} u^{-i\gamma} du = \frac{\Gamma(\frac{Q+3-i\gamma}{4})}{\Gamma(\alpha-1+\frac{Q+3-i\gamma}{4})}.$$
Thus we have proved the representation 
$$T_{\alpha,j}f(x) =  \frac{1}{2\pi} \int_{-\infty}^{\infty}d(Q,\gamma) \frac{\Gamma(\frac{Q+3-i\gamma}{4})}{\Gamma(\alpha-1+\frac{Q+3-i\gamma}{4})} f\ast I_{\gamma,j}(x)d\gamma .$$
Observe that the above is well defined and holomorphic on the whole of $ \mathbb{C} $ as $\frac{1}{\Gamma}$ is an entire function. 
\end{proof}

As we have already remarked, in proving Theorem 3.1 we assume $ p =q=2 $ and $ r =1.$ In view of Lemma 3.2 and Proposition 3.5 we are led to show that the operators $ S_{0,j} $ and $ T_{0,j} $ are bounded on $ L^2(\Ha).$ Note that these operators are defined using the function $ \varphi_j.$ We also need to prove the $ L^2 $ boundedness of operators defined in terms of $ \psi_j$ and $ \psi_0.$ As the proofs are similar we only treat $ S_{0,j} $ and $ T_{0,j} .$

The boundedness of $ S_{0,j} $ is something very easy to check. Recall that 
 $$ S_{0,j}f(x) = \frac{1}{ 4}\int_{0}^{1}  \frac{d}{du} \Big( u^{-3} \frac{d}{du} \big(u^{Q-1} f\ast P_{u,j}(x)\big) \Big) du$$
 $$ = \frac{1}{ 4}\int_{0}^{1}   \frac{d}{du} \big((Q-1)u^{Q-5} f\ast P_{u,j}(x) + u^{Q-4} f \ast \frac{d}{du}P_{u,j}(x) \big) $$
which under the assumption that $ n \geq 2$ reduces to 
$$ S_{0,j}f(x) = \frac{1}{4}\big( (Q-1) f\ast P_{1,j}(x) + f\ast K_j(x) \big) $$
where $ K_j(x) = \frac{d}{du}{|}_{u=1} P_{u,j}(x) $ is an integrable function. It is now clear that $ S_{0,j} $ is bounded on $ L^2(\Ha).$ We use the Fourier transform to prove the $ L^2$ boundedness of $ T_{0,j}.$ The following formula has been proved by Cowling and Haagerup \cite{CH}.

\begin{prop} For all $ 0 < \Re{(s)} < (n+1) $ the Fourier transform of the function $ |x|^{-Q+2s} $ is given by
$$  c_n  |\lambda|^{-s} \frac{\Gamma(s)}{\Gamma(\frac{n+1-s}{2})^2} \sum_{k=0}^\infty \frac{\Gamma(\frac{2k+n+1-s}{2})}{\Gamma(\frac{2k+n+1+s}{2})} P_k(\lambda).$$
\end{prop}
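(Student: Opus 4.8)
The plan is to deduce this formula from the computation already recorded in Section 2, namely the identity
$$\widehat{|.|^{-Q+i\gamma}}(\lambda)= (2\pi)^{n+1}  |\lambda|^{-i\gamma/2} \frac{\Gamma(\frac{i\gamma}{2})}{\Gamma(\frac{Q}{4}-\frac{i\gamma}{4})^2} \sum_{k=0}^{\infty}\frac{\Gamma\big(\frac{2k+n}{2}+\frac{2-i\gamma}{4}\big)}{\Gamma\big(\frac{2k+n}{2}+\frac{2+i\gamma}{4}\big)}P_k(\lambda),$$
which was attributed to Cowling and Haagerup. The two statements are literally the same computation written with a different spectral parameter: in the displayed formula we are on the critical line $\Re(-Q+i\gamma)=-Q$, while Proposition 3.7 allows the exponent $-Q+2s$ to move off that line, with $0<\Re(s)<n+1$. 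So the first step is simply to substitute $i\gamma = 2s$, i.e. $s = i\gamma/2$, into the Section 2 formula and to rename constants. Under this substitution $|\lambda|^{-i\gamma/2}$ becomes $|\lambda|^{-s}$; the prefactor $\Gamma(i\gamma/2)/\Gamma(\tfrac{Q}{4}-\tfrac{i\gamma}{4})^2$ becomes $\Gamma(s)/\Gamma(\tfrac{Q}{4}-\tfrac{s}{2})^2$, and since $Q = 2n+2$ one has $\tfrac{Q}{4}-\tfrac{s}{2} = \tfrac{n+1-s}{2}$, giving exactly $\Gamma(s)/\Gamma(\tfrac{n+1-s}{2})^2$. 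Finally $\tfrac{2k+n}{2}+\tfrac{2\mp i\gamma}{4}$ becomes $\tfrac{2k+n}{2}+\tfrac{2\mp 2s}{4} = \tfrac{2k+n+1\mp s}{2}$, which is precisely the ratio $\Gamma(\tfrac{2k+n+1-s}{2})/\Gamma(\tfrac{2k+n+1+s}{2})$ appearing in Proposition 3.7. The $(2\pi)^{n+1}$ is absorbed into the dimensional constant $c_n$.

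The second step is to justify that the resulting identity, which a priori is a statement on the line $\Re(s)=0$, in fact holds on the full strip $0<\Re(s)<n+1$. Here I would argue by analytic continuation: both sides, viewed as operator-valued functions of $s$ (or, after pairing against fixed Schwartz data, as scalar holomorphic functions of $s$), are holomorphic on the strip $0<\Re(s)<n+1$. On the left, $|x|^{-Q+2s}$ is a tempered distribution depending holomorphically on $s$ in this range precisely because the singularity at the origin is locally integrable when $\Re(-Q+2s)>-Q$, i.e. $\Re(s)>0$, and the decay at infinity is controlled when $\Re(-Q+2s) < 0$, i.e. $\Re(s) < n+1$; its group Fourier transform therefore also depends holomorphically on $s$ in this strip. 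On the right, the scalar prefactor $\Gamma(s)/\Gamma(\tfrac{n+1-s}{2})^2$ is holomorphic for $\Re(s)>0$ (the pole of $\Gamma(s)$ at $s=0$ is cancelled by the requirement $\Re(s)>0$), and for each fixed $\lambda$ the series $\sum_k \tfrac{\Gamma(\frac{2k+n+1-s}{2})}{\Gamma(\frac{2k+n+1+s}{2})} P_k(\lambda)$ defines, by Stirling's asymptotics $\Gamma(\mu+a)/\Gamma(\mu+b)\sim \mu^{a-b}$, a bounded operator with norm controlled uniformly on compact subsets of the strip, hence a holomorphic operator-valued function there. Since the two holomorphic functions agree on the line $\Re(s)=0$ (which is contained in the closure of the strip and is a set with a limit point), they agree throughout $0<\Re(s)<n+1$.

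The main obstacle, such as it is, is bookkeeping rather than mathematics: one must check that the normalising constants in the two conventions match and, more substantively, that the sum over $k$ on the right-hand side converges to a bona fide operator and not merely a formal spectral expansion. For the latter I would invoke the Stirling estimate quoted above to see that $\Gamma(\tfrac{2k+n+1-s}{2})/\Gamma(\tfrac{2k+n+1+s}{2}) = O(k^{-\Re(s)})$ as $k\to\infty$, so for $\Re(s)>0$ the coefficients are bounded (indeed decaying), and since $\{P_k(\lambda)\}$ are mutually orthogonal projections the partial sums converge strongly to a bounded operator. This is exactly the kind of estimate already used in the proof of Proposition 2.6, so no new ideas are needed. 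I expect the whole argument to be short; its role in the paper is just to supply the explicit symbol of $f\mapsto f\ast I_{\gamma,j}$-type operators needed for the $L^2$ boundedness of $T_{0,j}$ in Section 3.
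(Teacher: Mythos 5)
Your route is genuinely different from the paper's: the paper offers no proof of this proposition at all, simply attributing the formula to Cowling--Haagerup, whereas you derive it from the other Cowling--Haagerup formula already recorded in Section 2 (the Fourier transform of $|x|^{-Q+i\gamma}$) by the reparametrization $i\gamma=2s$ followed by analytic continuation. The bookkeeping in your first step is correct: with $Q=2n+2$ one has $\frac{Q}{4}-\frac{s}{2}=\frac{n+1-s}{2}$ and $\frac{2k+n}{2}+\frac{2\mp 2s}{4}=\frac{2k+n+1\mp s}{2}$, so the two displayed formulas really are the same expression in different variables, and your convergence argument for the spectral sum (the coefficients are $O(k^{-\Re(s)})$ by Stirling, and $\|\sum_k c_kP_k(\lambda)\|=\sup_k|c_k|$ since the $P_k(\lambda)$ are mutually orthogonal projections) is exactly the estimate the paper itself uses in Proposition 2.6. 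This reduction is a reasonable way to make the paper self-contained modulo a single citation.

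The one step that does not work as written is the appeal to the identity theorem. You propose to conclude agreement on the open strip $0<\Re(s)<n+1$ from agreement on the line $\Re(s)=0$, justified by saying that this line "is contained in the closure of the strip and has a limit point." The identity theorem requires the set of agreement to have a limit point \emph{inside} the common domain of holomorphy; a boundary line of the strip does not qualify, and two functions holomorphic on a strip can certainly disagree while their boundary values coincide unless some additional regularity is used. There is also a prior issue: $|x|^{-Q+i\gamma}$ is not locally integrable (its modulus is $|x|^{-Q}$), so the Section~2 identity on the line is itself a statement about a regularized distribution, which is why the paper always works with $I_\gamma=C(Q,\gamma)|x|^{-Q+i\gamma}$ with $C(Q,\gamma)$ vanishing appropriately. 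Both problems are repairable with the same observation: for a Schwartz function $\phi$, the map $s\mapsto\langle |x|^{-Q+2s},\phi\rangle$ extends meromorphically to $\C$ with poles only at $s\in\{0,-\tfrac12,-1,\dots\}$ (split the integral at $|x|=1$ and Taylor-expand the radial average near $r=0$), so both sides are in fact holomorphic on an open connected set containing the open strip \emph{together with} the punctured line $i\R\setminus\{0\}$; the identity theorem then applies legitimately. Alternatively, reverse the direction: prove the formula for real $s\in(0,\tfrac{n+1}{2})$, where $|x|^{-Q+2s}$ is an honest $L^1+L^2$ function and the matrix coefficients against $e^{i\lambda t}\varphi_k^\lambda$ are classical Laguerre integrals, and then continue into the full strip. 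Either repair is short, but as stated the continuation step is a genuine gap.
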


\begin{prop} For all $ n \geq 2$ and $ j =1,2,...,n$ the operator $ T_{0,j} $ is bounded on $ L^2(\Ha).$
\end{prop}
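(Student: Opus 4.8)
The plan is to proceed exactly as in the proof of Proposition~\ref{l22}: use the Plancherel theorem for the group Fourier transform to reduce the $L^2$ bound to the convergence of an explicit gamma--function integral. Recall that
\[ T_{0,j}f(x) = \frac{1}{2\pi}\int_{-\infty}^{\infty} d(Q,\gamma)\,\frac{\Gamma\big(\tfrac{Q+3-i\gamma}{4}\big)}{\Gamma\big(\tfrac{Q-1-i\gamma}{4}\big)}\, f\ast I_{\gamma,j}(x)\,d\gamma , \qquad \frac{\Gamma\big(\tfrac{Q+3-i\gamma}{4}\big)}{\Gamma\big(\tfrac{Q-1-i\gamma}{4}\big)} = \frac{Q-1-i\gamma}{4}, \]
so that by Minkowski's inequality and Plancherel it suffices to show that
\[ \int_{-\infty}^{\infty} |d(Q,\gamma)|\;|Q-1-i\gamma|\;\sup_{\lambda\neq0}\big\|\widehat{I_{\gamma,j}}(\lambda)\big\|\,d\gamma < \infty , \]
where $\|\cdot\|$ is the operator norm on $L^2(\R^n)$.

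The key point is that, although $I_{\gamma,j}=\varphi_j\,|x|^{-3}I_\gamma$ is not radial, the lemma immediately preceding this proposition gives $\widetilde{X}_j(|x|I_\gamma)=\tfrac14(-Q+1+i\gamma)\,|x|^{-3}\widetilde{X}_j(|x|^4)\,I_\gamma=\tfrac14(-Q+1+i\gamma)\,I_{\gamma,j}$, hence $I_{\gamma,j}=\tfrac{4}{-Q+1+i\gamma}\,\widetilde{X}_j\big(|x|I_\gamma\big)$ with $|x|I_\gamma=C(Q,\gamma)|x|^{-Q+1+i\gamma}$ \emph{radial}. Using $\pi_\lambda(\widetilde{X}_j g)=d\pi_\lambda(\widetilde{X}_j)\pi_\lambda(g)$ I would then write
\[ \widehat{I_{\gamma,j}}(\lambda)=\frac{4\,C(Q,\gamma)}{-Q+1+i\gamma}\; d\pi_\lambda(\widetilde{X}_j)\;\widehat{|x|^{-Q+1+i\gamma}}(\lambda), \]
noting that the scalar prefactor cancels the factor $Q-1-i\gamma$ produced by $T_{0,j}$. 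The Fourier transform of the radial distribution is furnished by the preceding proposition of Cowling--Haagerup, applied with $s=\tfrac{1+i\gamma}{2}$ (legitimate since $0<\Re s=\tfrac12<n+1$): it equals $|\lambda|^{-(1+i\gamma)/2}$ times the $\gamma$--dependent constant $c_n\,\Gamma\big(\tfrac{1+i\gamma}{2}\big)\big/\Gamma\big(\tfrac{2n+1-i\gamma}{4}\big)^2$ times $\sum_{k\ge0} m_k(\gamma)\,P_k(\lambda)$, where $m_k(\gamma)=\Gamma\big(k+\tfrac n2+\tfrac14-\tfrac{i\gamma}{4}\big)\big/\Gamma\big(k+\tfrac n2+\tfrac34+\tfrac{i\gamma}{4}\big)$.

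Next I would estimate the operator norm. The operator $d\pi_\lambda(\widetilde{X}_j)=i\lambda\xi_j=\tfrac12\big(A_j(\lambda)+A_j^*(\lambda)\big)$ carries the $k$--th Hermite eigenspace of $H(\lambda)$ into the $(k\pm1)$--th eigenspaces with norm $O\big(\sqrt{(k+1)|\lambda|}\big)$, while $\widehat{|x|^{-Q+1+i\gamma}}(\lambda)$ acts on the $k$--th eigenspace as a scalar of modulus $c_n\,|\lambda|^{-1/2}\,\big|\Gamma(\tfrac{1+i\gamma}{2})\big|\big/\big|\Gamma(\tfrac{2n+1-i\gamma}{4})\big|^2\,|m_k(\gamma)|$; since the eigenspaces are mutually orthogonal this yields
\[ \big\|\widehat{I_{\gamma,j}}(\lambda)\big\| \le \frac{C}{|Q-1-i\gamma|}\;|C(Q,\gamma)|\;\frac{\big|\Gamma(\tfrac{1+i\gamma}{2})\big|}{\big|\Gamma(\tfrac{2n+1-i\gamma}{4})\big|^2}\;\sup_{k\ge0}\Big(|m_k(\gamma)|\sqrt{k+1}\Big), \]
all powers of $|\lambda|$ cancelling. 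Writing $m_k(\gamma)$ in the form $\Gamma(v_k+\tfrac14)/\Gamma(v_k+\tfrac34)$ with $v_k=k+\tfrac n2+\tfrac{i\gamma}{4}$ (so $\Re v_k\ge1$; here I use $|\Gamma(\bar z)|=|\Gamma(z)|$), the uniform asymptotics $\Gamma(v+a)/\Gamma(v+b)\sim v^{a-b}$ on $\Re v\ge1$ give $|m_k(\gamma)|\le C|v_k|^{-1/2}\le C(k+1)^{-1/2}$, whence $\sup_{k,\gamma}|m_k(\gamma)|\sqrt{k+1}<\infty$.

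Finally, inserting these bounds and using $|d(Q,\gamma)C(Q,\gamma)|=|a(Q,\gamma)|$ with $a(Q,\gamma)=1-\frac{\Gamma((Q-i\gamma)/2)\Gamma((1+i\gamma)/2)}{\Gamma(Q/2)\Gamma(1/2)}$, the whole problem reduces to the convergence of
\[ \int_{-\infty}^{\infty} |a(Q,\gamma)|\;\frac{\big|\Gamma(\tfrac{1+i\gamma}{2})\big|}{\big|\Gamma(\tfrac{2n+1-i\gamma}{4})\big|^2}\;d\gamma . \]
Here $a(Q,\gamma)$ is bounded and vanishes at $\gamma=0$, so there is no difficulty near the origin. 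For $|\gamma|$ large, the asymptotic $\Gamma(\mu+i\nu)\sim\sqrt{2\pi}\,|\nu|^{\mu-1/2}e^{-\pi|\nu|/2}$ gives $\big|\Gamma(\tfrac{1+i\gamma}{2})\big|\sim\sqrt{2\pi}\,e^{-\pi|\gamma|/4}$ and $\big|\Gamma(\tfrac{2n+1-i\gamma}{4})\big|^2\sim 2\pi\,|\gamma/4|^{(2n-1)/2}e^{-\pi|\gamma|/4}$, so the integrand decays like $|\gamma|^{-(2n-1)/2}$, which is integrable at infinity exactly when $\tfrac{2n-1}{2}>1$, i.e.\ when $n\ge2$ --- precisely the standing hypothesis. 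The operators built from $\psi_j$ and $\psi_0$ are handled identically, with $\widetilde{X}_j$ replaced by $\widetilde{Y}_j$ or $T$, which shift the Hermite levels by the same amounts. I expect the main obstacle to be the uniform-in-$(\lambda,\gamma)$ operator norm estimate of the second step: one must balance the growth $\sqrt{(k+1)|\lambda|}$ of $d\pi_\lambda(\widetilde{X}_j)$ against the decay $|\lambda|^{-1/2}(k+1)^{-1/2}$ of the radial multiplier and simultaneously keep the $\gamma$--dependence of the gamma constants under control.
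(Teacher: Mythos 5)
Your proof is correct and follows essentially the same route as the paper: reduce via Plancherel to a $\gamma$-integral of operator norms, write $I_{\gamma,j}$ as $\frac{4C(Q,\gamma)}{-Q+1+i\gamma}\widetilde{X}_j\big(|x|^{-Q+1+i\gamma}\big)$, invoke the Cowling--Haagerup formula with $s=\frac{1+i\gamma}{2}$, control $d\pi_\lambda(\widetilde{X}_j)$ by creation/annihilation operators against the $(2k+n)^{-1/2}$ decay of the spectral multiplier, and check integrability at infinity, which forces $n\ge 2$. Your only (harmless) deviation is to evaluate $\Gamma\big(\tfrac{Q+3-i\gamma}{4}\big)/\Gamma\big(\tfrac{Q-1-i\gamma}{4}\big)=\tfrac{Q-1-i\gamma}{4}$ exactly so that it cancels the prefactor, where the paper estimates both factors asymptotically by $|\gamma|^{\pm1}$.
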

\begin{proof} In view of the representation
$$T_{0,j}f(x) =  \frac{1}{2\pi} \int_{-\infty}^{\infty}d(Q,\gamma) \frac{\Gamma(\frac{Q+3-i\gamma}{4})}{\Gamma(\frac{Q-1-i\gamma}{4})} f\ast I_{\gamma,j}(x)d\gamma $$
we only have to show that
$$  \int_{-\infty}^{\infty}|d(Q,\gamma)| \frac{|\Gamma(\frac{Q+3-i\gamma}{4})|}{|\Gamma(\frac{Q-1-i\gamma}{4})|} \|I_{\gamma,j}\| d\gamma < \infty $$
where $ \|I_{\gamma,j}\| $ stands for the norm of the operator $ f \rightarrow f\ast I_{\gamma,j}$ on $ L^2(\Ha).$ In order to estimate the norm of  this operator we use Plancherel theorem for the Fourier transform, namely
$$  \| f\|_2^2 = (2\pi)^{n-1} \int_{-\infty}^\infty  \|\hat{f}(\lambda)\|_{HS}^2 |\lambda|^n d\lambda$$
where $ \hat{f}(\lambda) = \pi_\lambda(f) $ the Fourier transform of $ f$ at $ \lambda.$ In view of the relation $ \pi_\lambda(f \ast g) = \hat{f}(\lambda)\hat{g}(\lambda) $ it follows that 
$$ \| f \ast I_{\gamma,j} \|_2 \leq  \sup_{\lambda \neq 0} \| \widehat{I_{\gamma,j}}(\lambda)\| $$ where
$ \| \widehat{I_{\gamma,j}}(\lambda)\| $ is the operator norm of $ \widehat{I_{\gamma,j}}(\lambda).$ In order to calculate the Fourier transform of $ I_{\gamma,j} $ we recall that $ I_{\gamma,j}(x) = |x|^{-3} \varphi_j(x) I_{\gamma}(x) $ where $ \varphi_j(x) = \widetilde{X}_j (|x|^4).$  In view of Lemma 3.3 we have
$$  I_{\gamma,j}(x) = \frac{4}{(-Q+1+i\gamma)} \widetilde{X}_j(|x| I_{\gamma}(x)) = \frac{4C(Q,\gamma) }{(-Q+1+i\gamma)} \widetilde{X}_j(|x|^{-Q+1+i\gamma}) .$$
This allows us to calculate the Fourier transform of $ I_{\gamma,j} $ in terms of the Fourier transform of $ |x|^{-Q+1+i\gamma} $ which is known explicitly.

For reasonable functions $ f $ it is known that $ \pi_\lambda( \widetilde{X}_jf) = d\pi_{\lambda}( \widetilde{X}_j)\pi_\lambda(f) $ where $ d\pi_{\lambda}$ is the derived representation of the Lie algebra $ \mathfrak{h}_n$ corresponding to $ \pi_{\lambda}.$  It is also known that $ d\pi_{\lambda}( \widetilde{X}_j)\varphi(\xi) 
= i\lambda \xi_j \varphi(\xi) $ where $ \varphi \in L^2(\R^n).$ Rewriting the above in terms of the annihilation and creation operators $ A_j(\lambda)^\ast = \frac{\partial}{\partial \xi_j}+\lambda \xi_j ,\,\,A_j(\lambda) = -\frac{\partial}{\partial \xi_j}+\lambda \xi_j$
we have
$$ \widehat{I_{\gamma,j}}(\lambda) = \frac{2i C(Q,\gamma)}{ (-Q+1+i\gamma)}( A_j(\lambda)+A_j(\lambda)^\ast) \pi_\lambda(|x|^{-Q+1+i\gamma}).$$
If we let $ H(\lambda) $ stand for the (scaled) Hermite operator $ -\Delta+\lambda^2 |\xi|^2 $ on $ \R^n $ then it is well known that the operators $ A_j(\lambda) H(\lambda)^{-1/2} $ and $ A_j^\ast(\lambda) H(\lambda)^{-1/2} $ are bounded on $ L^2(\R^n).$ Hence we are led to check the boundedness of the operator 
$ H(\lambda)^{1/2} \pi_\lambda(|x|^{-Q+1+i\gamma}).$ From Lemma  we infer that
$$\pi_\lambda(|x|^{-Q+1+i\gamma}) = c_n |\lambda|^{-(1+i\gamma)/2} \frac{\Gamma(\frac{1+i\gamma}{2})}{\Gamma(\frac{Q-(1+i\gamma)}{4})^2}\sum_{k=0}^{\infty}\frac{\Gamma\big(\frac{2k+n}{2}+\frac{1-i\gamma}{4}\big)}{\Gamma\big(\frac{2k+n}{2}+\frac{3+i\gamma}{4}\big)}P_k(\lambda).$$
As $ H(\lambda)^{1/2} P_k(\lambda) = ((2k+n)|\lambda|)^{1/2} P_k(\lambda) $ the operator norm of $ \widehat{I_{\gamma,j}}(\lambda) $ is a constant multiple of
$$ \frac{|C(Q,\gamma)|}{ |(-Q+1+i\gamma)|}   \frac{|\Gamma(\frac{1+i\gamma}{2})|}{|\Gamma(\frac{Q-(1+i\gamma)}{4})|^2}  \,\, \sup_{k \in \N} \Big(  (2k+n)^{1/2}\frac{|\Gamma(\frac{2k+n}{2}+\frac{1-i\gamma}{4})|}{|\Gamma(\frac{2k+n}{2}+\frac{3+i\gamma}{4})|}  \Big).$$
In view of  Stirling's formula for large $k$ we have  $$\Bigg|\frac{\Gamma\big(\frac{2k+n}{2}+\frac{1-i\gamma}{4}\big)}{\Gamma\big(\frac{2k+n}{2}+\frac{3+i\gamma}{4}\big)}\Bigg|=\Bigg|\frac{\Gamma\big(\frac{2k+n}{2}+\frac{1-i\gamma}{4}\big)/\Gamma\big(\frac{2k+n}{2}-\frac{i\gamma}{4}\big)}{\Gamma\big(\frac{2k+n}{2}+\frac{3+i\gamma}{4}\big)/\Gamma\big(\frac{2k+n}{2}+\frac{i\gamma}{4}\big)}\frac{\Gamma\big(\frac{2k+n}{2}-\frac{i\gamma}{4}\big)}{\Gamma\big(\frac{2k+n}{2}+\frac{i\gamma}{4}\big)}\Bigg| \leq C \frac{|\big(\frac{2k+n}{2}-\frac{i\gamma}{4}\big)|^{1/4}}{|\big(\frac{2k+n}{2}+\frac{i\gamma}{4}\big)|^{3/4}}.$$
From the above it is clear that
$$\sup_{k \in \N} \Big(  (2k+n)^{1/2}\frac{|\Gamma(\frac{2k+n}{2}+\frac{1-i\gamma}{4})|}{|\Gamma(\frac{2k+n}{2}+\frac{3+i\gamma}{4})|}  \Big)\leq C $$
and consequently the operator norm of $ \widehat{I_{\gamma,j}}(\lambda) $ is bounded by a constant multiple of
$$   \frac{|C(Q,\gamma)|}{ |(-Q+1+i\gamma)|}   \frac{|\Gamma(\frac{1+i\gamma}{2})|}{|\Gamma(\frac{Q-(1+i\gamma)}{4})|^2}.$$ 

Finally we are left with checking the finiteness of the following integral:
$$  \int_{-\infty}^{\infty}|d(Q,\gamma)| \frac{|\Gamma(\frac{Q+3-i\gamma}{4})|}{|\Gamma(\frac{Q-1-i\gamma}{4})|}  \frac{|C(Q,\gamma)|}{ |(-Q+1+i\gamma)|}   \frac{|\Gamma(\frac{1+i\gamma}{2})|}{|\Gamma(\frac{Q-(1+i\gamma)}{4})|^2}d\gamma .$$
Recall   that 
$$ C(Q,\gamma) d(Q,\gamma) = a(Q,\gamma)=\Big(1- \frac{\Gamma\big(\frac{Q-i\gamma}{2}\big)\Gamma\big(\frac{1+i\gamma}{2}\big)}{\Gamma\big(\frac{Q}{2}\big)\Gamma\big(\frac{1}{2}\big)}\Big)$$ 
We now make use of the following asymptotic formula for the gamma function: for $ |\nu| $ tending to infinity
	$$\Gamma(\mu+i\nu) \sim \sqrt{2\pi}|\nu|^{\mu-1/2}e^{-\frac{1}{2}\pi|\nu|}.$$
	In view of this it follows that $ a(Q,\gamma) $ is a bounded function of $ \gamma$ and
$$  \frac{|\Gamma(\frac{Q+3-i\gamma}{4})|}{|\Gamma(\frac{Q-1-i\gamma}{4})|} \leq C_1 |\gamma|,\,\,\,  \frac{|\Gamma(\frac{1+i\gamma}{2})|}{|\Gamma(\frac{Q-(1+i\gamma)}{4})|^2} \leq C_2 |\gamma|^{-n+\frac{1}{2}} $$as $ |\gamma| $ tends to infinity. Consequently, the integral under consideration converges under the assumption that  $ n \geq 2.$
\end{proof}

\section{Sparse bounds and boundedness of the maximal functions}

Our aim in this section is to sketch a proof of  the sparse bounds for the lacunary spherical maximal function stated in Theorem \ref{thm:sparse}. In doing so we closely follow  Bagchi et al \cite{BHRT} . We  equip $ \Ha $ with a metric induced by the Koranyi norm which makes it a homogeneous space. It is well known that on such spaces there is a well defined notion of dyadic cubes and grids with properties similar to their counter parts in the Euclidean setting. However, we need to be careful with the metric we choose since the group is non-commutative.

Recall that the Koranyi norm on $ \Ha $  is homogeneous of degree one with respect to the non-isotropic dilations. Since we are considering $ f \ast \sigma_r $ it is necessary to work with the left invariant metric $ d_L(x,y) = |x^{-1}y| = d_L(0, x^{-1}y)$ instead of the standard metric $ d(x,y) = |xy^{-1}| = d(0, xy^{-1})$, which is right invariant. The balls and cubes are then defined using $ d_L $. Thus $ B(a,r) = \{ x \in \Ha: |a^{-1}x| < r \}.$ With this definition we note that $ B(a,r) =a\cdot B(0,r) $, a fact which is crucial. This allows us to conclude that when $ f $ is supported in $ B(a,r) $ then $ f \ast \sigma_s $ is supported in $ B(a,r+s).$ Indeed, as the support of $ \sigma_s $ is contained in $ \overline{B(0,s)} $ we see that  $ f \ast \sigma_s $ is supported in $ B(a,r)\cdot\overline{B(0,s)} \subset a\cdot B(0,r)\cdot \overline{B(0,s)} \subset B(a,r+s).$ We have the following result by Hytonen \cite{HK}.
\begin{thm}
\label{thm:homogrid}
Let $\delta\in (0,1)$ with $\delta\le 1/96$. Then there exists a countable set of points $\{z_{\nu}^{k,\alpha} : \nu\in \mathscr{A}_k\}$, $k\in \Z$, $\alpha=1,2,\ldots,N$ and a finite number of dyadic systems $\mathcal{D}^{\alpha}:=\cup_{k\in \Z}\mathcal{D}_k^{\alpha}$, $\mathcal{D}_k^{\alpha}:=\{Q_{\nu}^{k,\alpha}:\nu\in \mathscr{A}_k\}$ such that
\begin{enumerate}
\item For every $\alpha\in \{1,2,\ldots, N\}$ and $k\in \Z$ we have
\begin{itemize}
\item[i)] $\Ha=\cup_{Q\in \mathcal{D}_k^{\alpha}}Q$ (disjoint union).
\item[ii)] $Q,P\in \mathcal{D}^{\alpha}\Rightarrow Q\cap P\in \{\emptyset, P, Q\}$.
\item[iii)] $Q_{\nu}^{k,\alpha}\in \mathcal{D}^{\alpha}\Rightarrow B\big(z_{\nu}^{k,\alpha}, \frac{1}{12}\delta^k\big)\subseteq Q_{\nu}^{k,\alpha}\subseteq B\big(z_{\nu}^{k,\alpha}, 4\delta^k\big)$. In this situation $z_\nu^{k,\alpha} $ is called the center of the cube and the side length $\ell{ (Q_\nu^{k,\alpha})}$ is defined to be $ \delta^k.$
\end{itemize}
\item For every ball $B=B(x,r)$, there exists a cube $Q_B\in \cup_{\alpha}\mathcal{D}^{\alpha}$ such that $B\subseteq Q_B$ and $\ell(Q_B)=\delta^{k-1}$, where $k$ is the unique integer such that $\delta^{k+1}<r\le \delta^k.$ 
\end{enumerate}
\end{thm}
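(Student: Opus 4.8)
The plan is to obtain Theorem~\ref{thm:homogrid} as an instance of the general construction of (adjacent) systems of dyadic cubes on geometrically doubling metric spaces, so the first task is merely to verify that $(\Ha, d_L)$ lies within the scope of that theory. Since the Cygan--Kor\'anyi gauge satisfies the triangle inequality $|xy|\le |x|+|y|$, the left-invariant function $d_L(x,y)=|x^{-1}y|$ is a genuine metric on $\Ha$ (not just a quasi-metric). Furthermore, left-invariance of the Haar measure together with the identity $|\delta_r(z,t)|=r|(z,t)|$ gives $B(x,r)=x\cdot B(0,r)=x\cdot\delta_r B(0,1)$, hence $|B(x,r)|=r^{Q}|B(0,1)|$ and in particular $|B(x,2r)|=2^{Q}|B(x,r)|$ for every $x$ and $r$. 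Thus $(\Ha,d_L,dx)$ is a space of homogeneous type and, a fortiori, $(\Ha,d_L)$ is geometrically doubling; these are exactly the hypotheses needed to run the abstract machinery of Christ and of Hyt\"onen--Kairema \cite{HK}.

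Next I would build a single dyadic system $\mathcal D^{\alpha}=\cup_k\mathcal D_k^{\alpha}$. For each $k\in\Z$ pick, by Zorn's lemma using geometric doubling, a maximal $\delta^k$-separated set $\{z_\nu^{k,\alpha}:\nu\in\mathscr A_k\}$, which is automatically a $\delta^k$-net. Organize these points into a forest by assigning to each $z_\nu^{k,\alpha}$ a ``parent'' among the points at scale $k-1$ lying within distance $\delta^{k-1}$. The cubes are then produced level by level following Christ's procedure: one starts from preliminary half-open Voronoi-type cells and redistributes the ``boundary'' pieces along the parent relation, so that the resulting sets $Q_\nu^{k,\alpha}$ partition $\Ha$ at each fixed scale, are nested from one scale to the next, and satisfy a two-sided sandwich estimate $B(z_\nu^{k,\alpha},c_1\delta^k)\subseteq Q_\nu^{k,\alpha}\subseteq B(z_\nu^{k,\alpha},C_1\delta^k)$. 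Summing the geometric series that governs how far a cube can stray from its center across the recursion, one can take $c_1=\tfrac1{12}$ and $C_1=4$, and the compatibility of the inner balls at scale $k$ with the outer balls at scale $k-1$ is precisely what forces the threshold $\delta\le 1/96$ appearing in the statement. The only point that requires attention beyond the Euclidean case is that the construction must be carried out with the \emph{left}-invariant metric $d_L$, so that $B(a,r)=a\cdot B(0,r)$ and both left translations and dilations act on cubes in the expected way; with that proviso the argument is verbatim the same as in the doubling metric space setting.

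For the second assertion --- that every ball $B(x,r)$ is contained in a cube of comparable side length belonging to one of finitely many systems --- I would invoke the ``dithering'' argument of Hyt\"onen--Kairema. One constructs $N$ systems $\mathcal D^1,\dots,\mathcal D^N$ differing by suitable shifts so that, for any $x$ and for the unique $k$ with $\delta^{k+1}<r\le\delta^k$, at least one $\mathcal D^\alpha$ contains a cube of side $\delta^{k-1}$ whose center is close enough to $x$ to engulf $B(x,r)$. On $\R^n$ the shifts are the classical translates by $\tfrac13\delta^{k}(\epsilon_1,\dots,\epsilon_n)$; here one uses the analogous finite family obtained by left-translating a fixed dyadic system, and $N$ depends only on the doubling constant $2^{Q}$. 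Since this step again reduces to the abstract statement, it is purely bookkeeping.

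The conceptual heart of the matter --- and essentially the only place where something could go wrong --- is the interplay between the non-commutative group law, the non-isotropic dilations, and the choice of metric: one must consistently use $d_L$ rather than the right-invariant $d$, for otherwise balls, and hence cubes, fail to transform correctly under translation. Once that is fixed there is no new idea required, and Theorem~\ref{thm:homogrid} is exactly the Hyt\"onen--Kairema theorem for the homogeneous space $(\Ha,d_L,dx)$, with the constants $\tfrac1{12}$, $4$ and the threshold $1/96$ read off from the explicit constructions there. Accordingly, rather than reproducing the lengthy construction, I would state it as a direct application, after the short verification that $(\Ha,d_L)$ is geometrically doubling recorded in the first paragraph.
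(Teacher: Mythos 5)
Your proposal matches the paper's treatment: the paper does not prove this theorem but simply quotes it from Hyt\"onen--Kairema \cite{HK}, having noted beforehand that $\Ha$ with the left-invariant Kor\'anyi metric $d_L$ is a space of homogeneous type. Your additional verification that $(\Ha,d_L,dx)$ is doubling (using $|B(x,r)|=r^Q|B(0,1)|$) and your sketch of the Christ/Hyt\"onen--Kairema construction are correct and consistent with what the paper implicitly relies on.
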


Once we have the above theorem we  can proceed as in Bagchi et al \cite{BHRT} to establish the sparse bounds. The main ingredients in the rather long proof are the $ L^p $ improving property of the spherical means $ A_r $ and their continuity property which we have established already. The proof presented in \cite{BHRT} can be repeated verbatim to get the sparse bounds. We refer the reader to \cite{BHRT} for all the details.

\begin{thm} 
	\label{thm:sparse1}
	Assume $ n \geq 2$ and fix $  0<\delta< \frac{1}{96}$. Let $ 1 < p, q < \infty $ be such that $ (\frac{1}{p},\frac{1}{q}) $ belongs to the interior of the triangle joining the points $ (0,1), (1,0) $ and $ (\frac{2n}{2n+1},\frac{2n}{2n+1}).$ Then for any  pair of compactly supported bounded functions $ (f,g) $ there exists a $ (p,q)$-sparse form such that $ \langle M^{\operatorname{lac}}_{\Ha}f, g\rangle \leq C \Lambda_{\mathcal{S},p,q}(f,g).$
\end{thm}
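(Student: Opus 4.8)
The plan is to follow the template of Lacey \cite{Lacey} and Bagchi et al \cite{BHRT}, using the dyadic grid structure from Theorem \ref{thm:homogrid} together with the two analytic inputs established in Sections 3 and 4: the $L^p$-improving bound $\|A_r f\|_q \le C r^{Q(1/q-1/p)}\|f\|_p$ on the open triangle with vertices $(0,0)$, $(1,1)$, $(\frac{2n}{2n+1},\frac{1}{2n+1})$, and the continuity estimate $\|A_r - A_r\tau_{\bf{a}}\|_{L^p\to L^q} \le C r^{Q(1/q-1/p)}|\delta_r^{-1}{\bf{a}}|^\eta$ on the same region. Since both are stated for the single averaging operator, the first step is a standard linearization: fix the pair $(f,g)$ of compactly supported bounded functions, and for each $x$ choose a measurable selection $k(x)\in\Z$ realizing (up to a factor $2$) the supremum defining $M_{\Ha}^{\operatorname{lac}}f(x)$, so that $\langle M_{\Ha}^{\operatorname{lac}}f, g\rangle \le 2\sum_x$-type expression controlled by $\int A_{\delta^{k(x)}}f(x)\, g(x)\,dx$. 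One then runs a Calderón--Zygmund-style recursive stopping-time argument on the dyadic grids $\mathcal{D}^\alpha$.

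The core is the stopping-time decomposition. Starting from a cube $Q_0$ containing the supports of $f$ and $g$ (obtained from part (2) of Theorem \ref{thm:homogrid}), one selects a collection $\mathcal{S}$ of cubes as follows: given a cube $Q\in\mathcal{S}$, let $\mathcal{S}(Q)$ be the maximal dyadic subcubes $Q'\subset Q$ for which either $\langle f\rangle_{Q',p} > C\langle f\rangle_{Q_0,p}$ or $\langle g\rangle_{Q',q} > C\langle g\rangle_{Q_0,q}$ (with the constant chosen large via the weak-type $(1,1)$ / maximal-function bounds so that $\sum_{Q'\in\mathcal{S}(Q)}|Q'| \le \frac12|Q|$, giving $\eta$-sparseness with $E_Q = Q\setminus\bigcup_{Q'\in\mathcal S(Q)}Q'$). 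The contribution to $\langle M_{\Ha}^{\operatorname{lac}}f, g\rangle$ is split according to this stopping tree; the part where the selected radius $\delta^{k(x)}$ is comparable to the side length of the stopping cube containing $x$ is the "principal part" and is estimated directly by the $L^p$-improving bound applied on that cube, producing exactly a term $|Q|\langle f\rangle_{Q,p}\langle g\rangle_{Q,q}$. The remaining "error" terms, where the averaging radius is much smaller than the cube, are handled by telescoping $A_{\delta^k} - A_{\delta^{k+1}}$ and invoking the continuity property: the difference $A_r - A_r\tau_{\bf{a}}$ controls the oscillation of $A_r f$ across a sub-cube of size $|{\bf a}|\sim$ the radius, and the gain $|\delta_r^{-1}{\bf a}|^\eta$ gives a geometric series in the scale ratio, summable because of the $\eta>0$ decay. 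Here the left-invariance remark — that $f$ supported in $B(a,r)$ forces $f\ast\sigma_s$ supported in $B(a,r+s)$, hence $A_{\delta^k}f$ localizes correctly relative to the dyadic grid — is what makes the localization of the argument legitimate on the non-commutative group.

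After one stopping generation, one iterates: inside each $Q\in\mathcal{S}(Q_0)$ the same decomposition is applied with the averaged quantities over $Q$ playing the role of the ambient averages, and the recursion terminates because the supports are bounded and the sparseness forces the total measure to contract. Summing the principal contributions over all generations yields $\langle M_{\Ha}^{\operatorname{lac}}f, g\rangle \le C\sum_{S\in\mathcal{S}}|S|\langle f\rangle_{S,p}\langle g\rangle_{S,q} = C\,\Lambda_{\mathcal{S},p,q}(f,g)$ with $\mathcal{S}=\bigcup_{\text{generations}}\mathcal{S}(\cdot)$, which is $\eta$-sparse by construction, and the exponents $(1/p,1/q)$ range exactly over the interior of the triangle with vertices $(0,1)$, $(1,0)$, $(\frac{2n}{2n+1},\frac{2n}{2n+1})$ — this is the dual triangle $\mathbf{L}_n$ of Figure \ref{figurea}, the duality $(f,g)\leftrightarrow(g,f)$ accounting for the reflection of the $L^p$-improving region. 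I expect the main obstacle to be the bookkeeping in the error terms: one must verify that the continuity exponent $\eta$, together with the geometric decay of ratios of scales, dominates the number of overlapping dyadic systems $N$ and the polynomial growth in the number of intermediate scales, so that all error series converge uniformly; this is precisely where the quantitative form of the continuity property (Theorem 4.1) rather than mere continuity is essential, and where one must be attentive to the left- versus right-invariance of the metric so that the translation ${\bf a}$ appearing in $A_r\tau_{\bf a}$ is compatible with the dyadic translates $a\cdot B(0,r)$. Since \cite{BHRT} carries out an entirely analogous argument for the complex-sphere means, the proof can be transcribed essentially verbatim once these two ingredients are in place, and I would state it as such rather than reproduce every estimate.
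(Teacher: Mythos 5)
Your overall architecture --- a stopping-time construction of the sparse family driven by the $L^p$-averages of $f$ and $g$, the $L^p$-improving bound for the principal term, the continuity property for the remainder, and recursion over generations --- matches the paper's proof, which likewise defers the hardest estimates to Bagchi et al.\ \cite{BHRT}. Two concrete points in your sketch, however, do not survive inspection. First, the single-generation estimate $\sum_{Q\in\mathcal{Q}}\langle A_Qf,g\mathbf{1}_{B_Q}\rangle\lesssim |Q_0|\langle f\rangle_{Q_0,p}\langle g\rangle_{Q_0,q}$ (Lemma \ref{lem:key}) is only available for $f=\mathbf{1}_F$; you apply the principal-part estimate directly to a general bounded $f$. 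The paper passes to general $f$ via the level-set decomposition $f=\sum_m f\mathbf{1}_{E_m}$ with $E_m=\{2^m\le f<2^{m+1}\}$, applies the indicator case to each piece, and resums using the Carleson-type Lemma \ref{lem:carleson} together with the domination of the $L^{\rho_1,1}(Q_0,|Q_0|^{-1}dx)$ norm by the $L^{\rho}$ norm for $\rho>\rho_1>p$. This step costs an epsilon in the first exponent and is precisely where the hypothesis that $(1/p,1/q)$ lies in the \emph{open} triangle is consumed; without it your sum over the levels of $f$ is uncontrolled.

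Second, the mechanism by which the continuity property enters is not a telescoping of $A_{\delta^k}-A_{\delta^{k+1}}$ in the radius. Inside the key lemma one performs a Calder\'on--Zygmund-type decomposition of $f$ over cubes $P$ at a scale $\delta^{k+s}$ much \emph{finer} than the averaging radius $\delta^{k}$, and the mean-zero bad part $b_P$ is handled by writing $A_rb_P$ as an average over ${\bf{a}}\in P$ of $A_rb_P-A_r\tau_{\bf{a}}b_P$; the factor $|\delta_r^{-1}{\bf{a}}|^{\eta}\sim\delta^{s\eta}$ is what sums the geometric series over the scale gap $s$. Your phrase ``sub-cube of size $|{\bf{a}}|\sim$ the radius'' would yield no gain at all. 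You should also retain the paper's preliminary reduction $A_Qf:=A_{\delta^{k+2}}(f\mathbf{1}_{V_Q})$, which ties each averaging scale to a cube of one of the $N$ dyadic systems of Theorem \ref{thm:homogrid} and, via the support inclusion $B(a,r)\cdot\overline{B(0,s)}\subset B(a,r+s)$ in the left-invariant metric, makes the localization to dyadic cubes legitimate; the linearization by the sets $B_Q=E_Q\setminus\cup_{Q'\supseteq Q}E_{Q'}$ then replaces your measurable selection $k(x)$. With these repairs the argument is the paper's.
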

\textit{Sketch of proof:} First we will reduce the case of getting sparse bounds for $ M^{\operatorname{lac}}_{\Ha}$ to a simpler case. For this we need to set up some notations. For each $k\in\mathbb{Z}$ and a cube $Q$ in $\Ha$ with $l(Q)=\delta^k$ writing $ V_{Q}=\cup_{P\in \mathbb{V}_{Q}}P$ where $\mathbb{V}_{Q}=\{P\in \mathcal{D}^1_{k+3}: B(z_{P},\delta^{k+1})\subseteq Q\}$ we define $$A_{Q}f:=A_{\delta^{k+2}}(f{\bf{1}}_{V_Q}).$$ From these definition it is not hard to see that $$A_{\delta^{k+2}}f\le\sum_{\alpha=1}^N\sum_{Q\in\mathcal{D}_k^{\alpha}}A_Qf$$ and also  whenever $supp(f)\subseteq Q$, $supp(A_Qf)\subseteq Q$. Hence it is enough to prove sparse bound for each of the maximal function defined by 
$$M_{\mathcal{D}^{\alpha}}f=\sup_{Q\in \mathcal{D}^{\alpha}}A_Qf\ \  \text{for each}\ \  \alpha=1,2,\ldots,N.$$ Now we use standard trick to linearize the supremum. Suppose $Q_0$ is a cube in $\mathcal{D}$ and $\mathcal{Q}$ be the collection of all dyadic subcubes of $Q_0.$ We define 
$$
E_Q:=\big\{x\in Q : A_Qf(x)\ge \frac{1}{2} \sup_{P\in \mathcal{Q}} A_Pf(x)\big\}
$$ 
for $Q\in \mathcal{Q}$. Note that for any $ x \in \Ha$ there exists  a $Q \in \mathcal{Q}$ such that  
$$ 
A_Qf(x)\ge \frac{1}{2} \sup_{P\in \mathcal{Q}} A_Pf(x) 
$$  
and hence $ x \in E_Q.$  If we define $B_Q=E_Q\setminus \cup_{Q'\supseteq Q}E_{Q'}$, then $\{B_Q: Q\in \mathcal{Q}\}$ are disjoint and also, $\cup_{Q\in \mathcal{Q}}B_Q=\cup_{Q\in \mathcal{Q}}E_Q$. Observing that $\{B_Q:Q\in \mathcal{Q}\}$, for any $f,g>0$ an easy calculation yields $$\langle\sup_{Q\in \mathcal{Q}}A_Qf,g\rangle\le 2 \sum_{Q\in \mathcal{Q}}\langle A_Qf,g\mathbf{1}_{B_Q}\rangle.$$
Now we will make use of the following lemma which is the main ingredient in proving the sparse bound.
\begin{lem}
	\label{lem:key}
	Let $1<p,q<\infty$ be such that $\big(\frac1p,\frac1q\big)$ in the interior of the triangle joining the points $(0,1), (1,0)$ and $\big(\frac{2n}{2n+1},\frac{2n}{2n+1}\big)$. Let $f=\mathbf{1}_{F}$ and let $ g $ be any bounded function supported in $ Q_0$. Let $C_0>1$ be a constant and let $\mathcal{Q}$ be a collection of dyadic sub-cubes of $Q_0\in \mathcal{D}$ for which the following holds
	\begin{equation}
	\label{eq:keyCondf1}
	\sup_{Q'\in \mathcal{Q}}\sup_{Q: Q'\subset Q\subset Q_0}\frac{\langle f\rangle_{Q,p}}{\langle f\rangle_{Q_0,p}}<C_0.
	\end{equation}
	Then there holds 
	$$
	\sum_{Q\in \mathcal{Q}}\langle A_Qf,g_Q\rangle\lesssim |Q_0|\langle f\rangle_{Q_0,p}\langle g\rangle_{Q_0,q}.
	$$
\end{lem}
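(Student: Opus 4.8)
The plan is to induct on the size of the cubes in $\mathcal{Q}$, peeling off a top layer of "stopping cubes" at each stage, exactly as in Lacey's argument and its adaptation in Bagchi et al. Write $f = \mathbf{1}_F$ and for each $Q\in\mathcal{Q}$ recall that $A_Qf = A_{\delta^{k+2}}(f\mathbf{1}_{V_Q})$ is supported in $Q$ when $\ell(Q)=\delta^k$. First I would dispose of the trivial contributions: for those $Q\in\mathcal{Q}$ with $\langle f\rangle_{Q,p}$ comparable to $\langle f\rangle_{Q_0,p}$, the $L^p$-improving estimate of Theorem 3.6 gives $\langle A_Qf, g_Q\rangle \lesssim |Q|\,\langle f\rangle_{Q,p}\langle g\rangle_{Q,q}$ (with $(1/p,1/q)$ in the stated open triangle, after passing from the homogeneous-dimension scaling to averages over the cube of side $\delta^k$), and since the $B_Q$-type sets underlying $g_Q$ are pairwise disjoint inside $Q_0$, summing these yields a term bounded by $|Q_0|\langle f\rangle_{Q_0,p}\langle g\rangle_{Q_0,q}$ via Hölder on the sparse form.

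The substance is the recursion. Fix a large constant $\Lambda$ and let $\mathcal{B}$ be the maximal cubes $Q\in\mathcal{Q}$ with $\langle g\rangle_{Q,q} > \Lambda \langle g\rangle_{Q_0,q}$, together with the maximal cubes where the $L^1$-averaged overlap of $V_Q$ with $F$ first drops by a fixed factor; these are the children in the stopping-time tree. Standard maximal-function / Carleson arguments show $\sum_{Q\in\mathcal{B}}|Q| \le \tfrac12|Q_0|$ provided $\Lambda$ is chosen large, using condition \eqref{eq:keyCondf1} to control the $f$-side and the John–Nirenberg-type bound for $g$ on the $g$-side. For cubes $Q\in\mathcal{Q}$ not contained in any $B\in\mathcal{B}$, one has uniform control $\langle g\rangle_{Q,q}\lesssim \langle g\rangle_{Q_0,q}$ and $\langle f\rangle_{Q,p}\lesssim \langle f\rangle_{Q_0,p}$, and here I would invoke the $L^p$-improving bound together with the \emph{continuity estimate} of Theorem 4.1 to handle the "off-diagonal" interaction between $A_Qf$ and the parts of $g$ living on cubes strictly smaller than $Q$: writing $A_Qf$ as an average and comparing $A_Qf$ with $\tau_{\mathbf a}A_Qf$ for $\mathbf a$ of size $\lesssim\delta^k$ converts the sum over that layer into a geometrically convergent series in the parameter $\delta^{\eta}$. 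This produces the bound $|Q_0|\langle f\rangle_{Q_0,p}\langle g\rangle_{Q_0,q}$ for the top layer, and then the lemma is applied recursively to each $B\in\mathcal{B}$ with $Q_0$ replaced by $B$; since $\sum|B|\le\tfrac12|Q_0|$ the geometric series over generations converges and assembles into the claimed bound, with the collection of stopping cubes forming the required sparse family.

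The main obstacle — and the place where the noncommutativity of $\Ha$ genuinely enters — is the off-diagonal step: making precise that the pieces of $g$ supported on cubes strictly inside $Q$ interact with $A_Qf$ only through the \emph{difference} $A_{\delta^{k+2}}f - \tau_{\mathbf a}A_{\delta^{k+2}}f$, and that the relevant translations $\mathbf a$ have Koranyi size controlled by the side length. This is exactly why the spheres, cubes, and translations must all be set up with the \emph{left}-invariant metric $d_L(x,y)=|x^{-1}y|$ as emphasised before Theorem 4.2, so that $\operatorname{supp} f\subseteq B(a,r)$ forces $\operatorname{supp}(f\ast\sigma_s)\subseteq B(a,r+s)$; with that alignment in hand, the estimate $\|A_1 - A_1\tau_{\mathbf a}\|_{p\to q}\lesssim |\mathbf a|^{\eta}$ from Theorem 4.1 gives the decay needed to sum the geometric series. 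Everything else is the bookkeeping of the stopping-time construction, which goes through verbatim from \cite{BHRT}; I would only check that the numerology of the triangle with vertices $(0,1)$, $(1,0)$, $(\tfrac{2n}{2n+1},\tfrac{2n}{2n+1})$ matches the dual of the $L^p$-improving triangle of Theorem 3.6, which it does after the standard duality reflection $(1/p,1/q)\mapsto(1-1/q,1-1/p)$.
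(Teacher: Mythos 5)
First, a point of reference: the paper does not actually write out a proof of this lemma --- it asserts that, once the $L^p$-improving property (Section 3) and the continuity property (Section 4) are in hand, the argument of Bagchi et al.\ \cite{BHRT} (following \cite{Lacey}) goes through verbatim. So your sketch must be measured against that template, and while you correctly name the two analytic inputs, there is a genuine gap at the one place where the lemma is actually hard. The sets $B_Q$ are pairwise disjoint, but the cubes $Q\in\mathcal{Q}$ themselves overlap massively (a full nested chain, indeed all dyadic subcubes of $Q_0$ at every level, is permitted), so $\sum_{Q\in\mathcal{Q}}|Q|$ is not controlled by $|Q_0|$. Hence the term-by-term use of the $L^p$-improving bound, $\langle A_Qf,g\mathbf{1}_{B_Q}\rangle\lesssim |Q|^{1/q'}\langle f\rangle_{Q,p}\|g\mathbf{1}_{B_Q}\|_{q}$, followed by H\"older in $Q$, produces the divergent factor $\big(\sum_{Q}|Q|\big)^{1/q'}$; your first paragraph treats exactly this step as ``trivial,'' but it does not close. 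This is precisely why one must decompose $f\mathbf{1}_{V_Q}$ (not $g$) into a conditional expectation plus martingale differences $\Delta_{k+s}(f\mathbf{1}_{V_Q})$ at scales \emph{finer} than $Q$: a piece with vanishing mean on cubes of side $\delta^{k+s}$ can be written as an average of $(A_r-A_r\tau_{\mathbf a})$ applied to it with $|\mathbf a|\lesssim\delta^{k+s}\ll r\approx\delta^{k+2}$, and the continuity theorem then yields the gain $|\delta_r^{-1}\mathbf a|^{\eta}\approx\delta^{\eta(s-2)}$, which is what makes the double sum over $s$ and over $\mathcal{Q}$ (using $f=\mathbf{1}_F$ and \eqref{eq:keyCondf1}) converge. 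Your version instead translates $A_Qf$ by $\mathbf a$ of size $\lesssim\delta^k$, which is \emph{larger} than the radius $r=\delta^{k+2}$; there $|\delta_r^{-1}\mathbf a|\gtrsim\delta^{-2}>1$ and the continuity estimate gives no decay whatsoever, so the advertised geometric series never appears.

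There is also a structural misplacement: you have imported the stopping-time recursion into the proof of the lemma, whereas that recursion belongs to the deduction of the sparse bound (Theorem \ref{thm:sparse1}) \emph{from} the lemma, where it legitimately produces a sparse form. The lemma itself is the single-generation estimate whose conclusion is the one product $|Q_0|\langle f\rangle_{Q_0,p}\langle g\rangle_{Q_0,q}$. If you recurse into stopping cubes $B$ and apply the lemma there, you obtain $\sum_B|B|\langle f\rangle_{B,p}\langle g\rangle_{B,q}$, and to ``assemble'' this into the single term you would need geometric decay across generations; but your $B$ are by construction the cubes where $\langle g\rangle_{B,q}$ is large, and hypothesis \eqref{eq:keyCondf1} does not persist relative to $B$ (it bounds $\langle f\rangle_{Q,p}$ by $C_0\langle f\rangle_{Q_0,p}$, not by $C_0\langle f\rangle_{B,p}$, and $\langle f\rangle_{B,p}$ can be far smaller than $\langle f\rangle_{Q_0,p}$). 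So the recursion as set up neither reproduces the hypotheses nor terminates in the claimed single-term bound --- and, once the mean-zero decomposition above is in place, it is not needed.
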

 Using the continuity property, we have established in the previous  section, this lemma can be proved using exactly the same argument as in Bagchi et al \cite{BHRT}. We refer the reader to \cite{BHRT} for the proof of this lemma. \\
 
 Now once we have  this lemma, the sparse domination result is immediate. Since $f$ is compactly supported, we may assume that the support of $f$ is contained in some cube $Q_0$. Also we can take $f$ to be non-negative. Then notice that for a fixed dyadic grid $\mathcal{D}$ we have 
  $$M_{\mathcal{D}}f=M_{\mathcal{D}\cap Q_0}f=\sup_{Q\in \mathcal{D}\cap Q_0}|A_Qf|.$$Hence we need to prove a sparse bound for $$\sum_{Q\in \mathcal{D}\cap Q_0}\langle A_Qf,g\mathbf{1}_{B_Q}\rangle.$$ Using the previous lemma we first prove for a special case when $f=\mathbf{1}_{F}$ where $F\subseteq Q_0.$ We consider the following set $$\mathcal{E}_{Q_0}:=\{P \ \text{maximal subcube of} \ Q_0 :\langle f\rangle_{P,p}> 2\langle f\rangle_{Q_0,p}\}$$
   Let $E_{Q_0}=\cup_{P\in \mathcal{E}_{Q_0}}$. For a suitable choice of $c_n>1$ we can arrange $|E_{Q_0}|<\frac12|Q_0|$. We let $F_{Q_0}=Q_0\setminus E_{Q_0}$ so that $|F_{Q_0}|\ge \frac12|Q_0|$. We define
  \begin{equation}
  \label{eq:zero}
  \mathcal{Q}_0=\{Q\in \mathcal{D}\cap Q_0: Q\cap E_{Q_0}=\emptyset\}.
  \end{equation}
  Note that when $Q\in \mathcal{Q}_0$ then $\langle f\rangle_{Q,p}\le 2\langle f\rangle_{Q_0,p}$. For otherwise, if $\langle f\rangle_{Q,p}>2\langle f\rangle_{Q_0,p}$ then there exists $P\in \mathcal{E}_{Q_0}$ such that $P\supset Q$, which is a contradiction. For the same reason, if $Q'\in \mathcal{Q}_0$ and $Q'\subset Q\subset Q_0$ then $\langle f\rangle_{Q,p}\le 2\langle f\rangle_{Q_0,p}$. Thus 
  $$
  \sup_{Q'\in \mathcal{Q}_0}\sup_{Q:Q'\subset Q\subset Q_0}\langle f\rangle_{Q,p}\le 2\langle f\rangle_{Q_0,p}.
  $$
  Note that for any $Q\in \mathcal{D}\cap Q_0$, either $Q\in \mathcal{Q}_0$ or $Q\subset P$ for some $P\in \mathcal{E}_{Q_0}$. Thus 
  $$
  \sum_{Q\in \mathcal{D}\cap Q_0}\langle A_Q f,g\mathbf{1}_{B_Q}\rangle=\sum_{Q\in Q_0}\langle A_Q f,g\mathbf{1}_{B_Q}\rangle+\sum_{P\in \mathcal{E}_{Q_0}}\sum_{Q\subset P}\langle A_Q f,g\mathbf{1}_{B_Q}\rangle
  $$
  for any $Q\in \mathcal{Q}_0$, $Q\subset F_{Q_0}$ and hence
  $$
  \sum_{Q\in \mathcal{Q}_0}\langle A_Q f,g\mathbf{1}_{B_Q}\rangle=\sum_{Q\in \mathcal{Q}_0}\langle A_Q f,g\mathbf{1}_{F_{Q_0}}\mathbf{1}_{B_Q}\rangle.
  $$
  Applying Lemma \ref{lem:key} we obtain
  $$
  \sum_{Q\in \mathcal{Q}_0}\langle A_Q f,g\mathbf{1}_{B_Q}\rangle\le C|Q_0|\langle f\rangle_{Q_0,p} \langle g\mathbf{1}_{F_{Q_0}}\rangle_{Q_0,q}.
  $$
  
  Let $\{P_j\}$ be an enumeration of the cubes in $\mathcal{E}_{Q_0}$. Then the second sum above is given by 
  $$
  \sum_{j=1}^{\infty}\sum_{Q\in P_j\cap \mathcal{D}}\langle A_Q f,g\mathbf{1}_{B_Q}\rangle.
  $$
  For each $j$ we can repeat the above argument recursively. Putting everything together we get a sparse collection $\mathcal{S}$ for which
  \begin{equation}
  \label{eq:toget}
  \sum_{Q\in  \mathcal{D}\cap Q_0}\langle A_Q f,g\mathbf{1}_{B_Q}\rangle\le C\sum_{S\in \mathcal{S}}|S| |\langle f\rangle_{S,p} \langle g\mathbf{1}_{F_{S}}\rangle_{S,q}.
  \end{equation}
  This proves the result when $f=\mathbf{1}_F$.\\
  
  Now we will prove the above result \ref{eq:toget} for any non-negetive, compactly supported bounded function $f$. Here we modify the definition of $\mathcal{E}_{Q_0}$ as follows $$\mathcal{E}_{Q_0}:=\{P \ \text{maximal subcube of} \ Q_0 :\langle f\rangle_{P,p}> 2\langle f\rangle_{Q_0,p}\ \text{or}~\langle g\rangle_{P,p}> 2\langle g\rangle_{Q_0,p}\}$$ Now defining $E_{Q_0},F_{Q_0}$ and $\mathcal{Q}_0$ same as before, we have 
  $$
  \sup_{Q'\in \mathcal{Q}_0}\sup_{Q:Q'\subset Q\subset Q_0}\langle f\rangle_{Q,p}\le 2\langle f\rangle_{Q_0,p} ~\text{and}~  \sup_{Q'\in \mathcal{Q}_0}\sup_{Q:Q'\subset Q\subset Q_0}\langle g\rangle_{Q,q}\le 2\langle g\rangle_{Q_0,q}.
  $$
  Now  we will decompose $f$ in the following way so that we can use the sparse domination already proved for characteristic functions. We write $f=\sum_{m}f_m$ where $f_m:=f\mathbf{1}_{E_m}$ and $E_m:=\{x\in Q_0:2^m\le f(x)<2^{m+1}\}.$ For each $m$ applying the sparse domination to $\mathbf{1}_{E_m}$, we get a sparse family $\mathcal{S}_m$ such that $$\sum_{Q\in Q_0\cap \mathcal{D}}\langle A_Q\mathbf{1}_{E_m},g\mathbf{1}_{F_{Q_0}}\mathbf{1}_{B_Q}\rangle\le C\sum_{S\in \mathcal{S}_m}|S|\langle \mathbf{1}_{E_m}\rangle_{S,p}\langle g\mathbf{1}_{F_{Q_0}}\rangle_{S,q}$$ Hence using the fact that for any $Q\in \mathcal{Q}_0$, $Q\subset F_{Q_0}$ we have $$\sum_{Q\in \mathcal{Q}_0}\langle A_Q f_m,g\mathbf{1}_{B_Q}\rangle \le C2^{m+1}\sum_{S\in \mathcal{S}_m}|S|\langle \mathbf{1}_{E_m}\rangle_{S,p}\langle g\mathbf{1}_{F_{Q_0}}\rangle_{S,q}.$$ Now it is clear that if $S\cap F_{Q_0}=\phi$ then $\langle g\mathbf{1}_{F_{Q_0}}\rangle_{S,q}=0.$
  Also when $S\cap F_{Q_0}\neq\phi$, using the definition of $\mathcal{E}_{Q_0}$ and $\mathcal{Q}_0$ one can easily show that $\langle g\mathbf{1}_{F_{Q_0}}\rangle_{S,q}\le C \langle g\rangle_{Q_0,q}.$ 
  Hence we make use of the following lemma proved in \cite{Lacey} ( see also  \cite{BHRT}):
  \begin{lem}
  	\label{lem:carleson}
  	Let $\mathcal{S}$ be a collection of sparse sub-cubes  of a fixed dyadic cube $Q_0$ and let $1\le s<t<\infty$. Then, for a bounded function $\phi$,
  	$$
  	\sum_{Q\in \mathcal{S}}\langle \phi\rangle_{Q,s}|Q|\lesssim \langle \phi\rangle_{Q_0,t}|Q_0|.
  	$$
  \end{lem}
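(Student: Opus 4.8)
The lemma is the classical Carleson embedding inequality, and the plan is to prove it by the usual maximal-function argument, now carried out with the dyadic grids on $\Ha$ supplied by Theorem \ref{thm:homogrid}. By homogeneity of the asserted inequality we may normalise so that $\langle\phi\rangle_{Q_0,t}=1$, i.e. $\int_{Q_0}|\phi|^t\,dx=|Q_0|$, and then the goal is simply $\sum_{Q\in\mathcal S}|Q|\,\langle\phi\rangle_{Q,s}\lesssim|Q_0|$. First I would invoke the sparseness of $\mathcal S$: fixing the pairwise disjoint sets $E_Q\subset Q$ with $|E_Q|>\eta|Q|$, one has $|Q|<\eta^{-1}|E_Q|$ for every $Q\in\mathcal S$, so it suffices to control $\sum_{Q\in\mathcal S}|E_Q|\,\langle\phi\rangle_{Q,s}$.

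The key step is to dominate the local $L^s$-average pointwise by a maximal function. Let $M_{Q_0}^{\mathcal D}$ denote the dyadic maximal operator built from the cubes of $\mathcal D$ contained in $Q_0$. For each $Q\in\mathcal S$ and each $x\in Q$ we have $\frac1{|Q|}\int_Q|\phi|^s\le M_{Q_0}^{\mathcal D}(|\phi|^s)(x)$, hence $\langle\phi\rangle_{Q,s}\le\bigl(M_{Q_0}^{\mathcal D}(|\phi|^s)(x)\bigr)^{1/s}$ for every $x\in E_Q$. Integrating over $E_Q$, summing in $Q$, and using that the $E_Q$ are disjoint subsets of $Q_0$ gives
$$\sum_{Q\in\mathcal S}|Q|\,\langle\phi\rangle_{Q,s}\le\eta^{-1}\sum_{Q\in\mathcal S}\int_{E_Q}\bigl(M_{Q_0}^{\mathcal D}(|\phi|^s)\bigr)^{1/s}dx\le\eta^{-1}\int_{Q_0}\bigl(M_{Q_0}^{\mathcal D}(|\phi|^s)\bigr)^{1/s}dx.$$
Then Hölder's inequality with exponents $t$ and $t'=t/(t-1)$ bounds the last integral by $|Q_0|^{1-1/t}\bigl(\int_{Q_0}(M_{Q_0}^{\mathcal D}(|\phi|^s))^{t/s}dx\bigr)^{1/t}$, and since $t/s>1$ the dyadic maximal operator is bounded on $L^{t/s}$ with a constant depending only on $t/s$, so $\int_{Q_0}(M_{Q_0}^{\mathcal D}(|\phi|^s))^{t/s}dx\lesssim\int_{Q_0}|\phi|^t\,dx=|Q_0|$. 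Assembling these estimates yields $\sum_{Q\in\mathcal S}|Q|\,\langle\phi\rangle_{Q,s}\lesssim|Q_0|=|Q_0|\,\langle\phi\rangle_{Q_0,t}$, as required.

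I do not expect a genuine obstacle here: nothing beyond the existence of a dyadic system and the weak $(1,1)$ bound for the associated localised dyadic maximal operator is used, and all constants depend only on $s$, $t$ and the sparseness parameter $\eta$. The only points needing care are that the grid of Theorem \ref{thm:homogrid} restricts to a bona fide dyadic system inside $Q_0$---so that the stopping-time proof of the weak $(1,1)$ inequality, and hence (by Marcinkiewicz interpolation with the trivial $L^\infty$ bound) the $L^{t/s}$ boundedness, go through with absolute constant---together with the routine bookkeeping of the normalisation. An alternative but essentially equivalent route is a direct iteration over the stopping cubes where $\langle\phi\rangle_{Q,s}$ first exceeds $2^k\langle\phi\rangle_{Q_0,s}$; the maximal-function packaging above is merely the shortest.
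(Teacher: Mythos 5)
Your argument is correct and is exactly the standard Carleson-embedding proof: sparseness to pass to the disjoint sets $E_Q$, pointwise domination by the localised dyadic maximal function, H\"older with exponents $t,t'$, and the $L^{t/s}$ bound for the dyadic maximal operator (whose weak $(1,1)$ constant is $1$ on the Hyt\"onen--Kairema grid by the usual stopping-time argument, so everything depends only on $s,t,\eta$). The paper does not reproduce a proof but defers to \cite{Lacey} and \cite{BHRT}, where essentially this same argument appears, so there is nothing further to reconcile.
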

 For some $\rho_1>p$ we have 
 $$
 \sum_{Q\in \mathcal{Q}_0}\langle A_Q f_m,g\mathbf{1}_{B_Q}\rangle\le C2^{m+1}\langle g\rangle_{Q_0,q}\langle \mathbf{1}_{E_m}\rangle_{Q_0,\rho_1}|Q_0|.
 $$ But we know that $f=\sum_mf_m.$ So, finally we obtain
 $$
 \sum_{Q\in \mathcal{Q}_0}\langle A_Q f,g\mathbf{1}_{B_Q}\rangle\le C\langle g\rangle_{Q_0,q}|Q_0|\sum_m2^m\langle \mathbf{1}_{E_m}\rangle_{Q_0,\rho_1}.
 $$
Note that a simple calculation yields $\sum_m2^m\langle \mathbf{1}_{E_m}\rangle_{Q_0,\rho_1}\le C\|f\|_{L^{\rho_1,1}(Q_0,\frac{1}{|Q_0|}dx)}$ where $\|.\|_{L^{\rho_1,1}}$ denotes the Lorentz space norm. Also it is a well-known fact that  for any $\rho>\rho_1$, $L^{\rho_1,1}(Q_0,\frac{1}{|Q_0|}dx)$- norm is dominated by the $L^{\rho}(Q_0,\frac{1}{|Q_0|}dx)$- norm. Hence we have $$
\sum_{Q\in \mathcal{Q}_0}\langle A_Q f,g\mathbf{1}_{B_Q}\rangle\le C\langle g\rangle_{Q_0,q}|Q_0|\langle f\rangle_{Q_0,\rho}.
$$ Now proceeding same as in case $f=\mathbf{1}_F$, we get the following sparse domination $$
\langle M_{\mathcal{D}}f,g\rangle\le  C\sum_{S\in \mathcal{S}}|S| |\langle f\rangle_{S,\rho} \langle g\rangle_{S,q}.
$$ which proves the theorem. 
\qed

As a consequences of the sparse bound we get some new weighted and unweighted inequalities for the lacunary maximal function under consideration.  We now use the above sparse domination along with the following well-known boundedness property of the sparse forms proved in \cite{CUMP}:
\begin{prop}
	
	Let $1\le r<s'\le\infty$. Then,
	$$
	\Lambda_{r,s}(f,g)\lesssim \|f\|_{L^p}\|g\|_{L^{p'}}, \quad r<p<s'.
	$$
\end{prop}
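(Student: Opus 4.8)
The plan is to prove this by the familiar reduction of a sparse form to a single integral of maximal functions, followed by H\"older's inequality and the $L^q$-boundedness of the Hardy--Littlewood maximal operator on $\Ha$. Fix $\eta\in(0,1)$; it is enough to bound $\Lambda_{\mathcal{S},r,s}(f,g)$ uniformly over all $\eta$-sparse collections $\mathcal{S}$, with a constant depending only on $\eta,r,s$ and $p$. For $1\le u<\infty$ write $M_uh=\big(M(|h|^u)\big)^{1/u}$, where $M$ is the uncentered Hardy--Littlewood maximal operator on $\Ha$ taken over Koranyi balls. Since $(\Ha,d_L,dx)$ is a space of homogeneous type, $M$ is bounded on $L^q(\Ha)$ for every $q>1$; this is the only external fact the argument needs.

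The first step is to use sparseness to replace the weight $|S|$ by $|E_S|$. Choosing pairwise disjoint sets $E_S\subset S$ with $|E_S|>\eta|S|$, we get
\[
\Lambda_{\mathcal{S},r,s}(f,g)=\sum_{S\in\mathcal{S}}|S|\,\langle f\rangle_{S,r}\,\langle g\rangle_{S,s}\ \le\ \eta^{-1}\sum_{S\in\mathcal{S}}|E_S|\,\langle f\rangle_{S,r}\,\langle g\rangle_{S,s}.
\]
Next, since each cube $S$ occurring in $\mathcal{S}$ is, up to the doubling constant of $\Ha$, an admissible averaging set for $M$ at every point it contains, one has the pointwise bounds $\langle f\rangle_{S,r}\lesssim M_rf(x)$ and $\langle g\rangle_{S,s}\lesssim M_sg(x)$ for all $x\in E_S\subset S$. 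Inserting these and then using the disjointness of $\{E_S\}_{S\in\mathcal{S}}$ yields
\[
\Lambda_{\mathcal{S},r,s}(f,g)\ \lesssim\ \sum_{S\in\mathcal{S}}\int_{E_S}M_rf(x)\,M_sg(x)\,dx\ \le\ \int_{\Ha}M_rf(x)\,M_sg(x)\,dx.
\]

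It remains to estimate the last integral. By H\"older's inequality with the conjugate exponents $p$ and $p'$,
\[
\int_{\Ha}M_rf\cdot M_sg\,dx\ \le\ \|M_rf\|_{L^p}\,\|M_sg\|_{L^{p'}}.
\]
Now $\|M_rf\|_{L^p}=\big\|M(|f|^r)\big\|_{L^{p/r}}^{1/r}$, and the hypothesis $r<p$ gives $p/r>1$, so the $L^{p/r}$-boundedness of $M$ gives $\|M_rf\|_{L^p}\lesssim\big\||f|^r\big\|_{L^{p/r}}^{1/r}=\|f\|_{L^p}$. Likewise $p<s'$ is equivalent to $p'>s$, hence $p'/s>1$ and $\|M_sg\|_{L^{p'}}\lesssim\|g\|_{L^{p'}}$. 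Combining the three displays proves the proposition.

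Since the argument uses only sparseness, H\"older's inequality and the elementary $L^q$-bounds ($q>1$) for the maximal operator, there is no real obstacle here; the one point requiring care is that $p$ must be \emph{strictly} between $r$ and $s'$, which is exactly what makes both $p/r>1$ and $p'/s>1$ (and covers the endpoints $r=1$ and $s'=\infty$), and which is precisely the open-triangle constraint on $(1/p,1/q)$ in Theorem \ref{thm:sparse}. If one prefers to avoid the metric maximal function altogether, every cube occurring in our sparse families belongs to one of the finitely many dyadic systems $\mathcal{D}^{\alpha}$ of Theorem \ref{thm:homogrid}, and one may replace $M$ throughout by the corresponding dyadic maximal operator $M^{\mathcal{D}^{\alpha}}$: then $\langle f\rangle_{S,r}\le M^{\mathcal{D}^{\alpha}}_rf(x)$ for $x\in S$ holds with constant $1$, and the $L^q$-boundedness for $q>1$ follows from the weak $(1,1)$ estimate and Marcinkiewicz interpolation.
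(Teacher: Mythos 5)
Your proof is correct. The paper itself gives no argument for this proposition --- it simply cites Cruz-Uribe--Martell--P\'erez \cite{CUMP} --- so there is nothing to compare against except the standard proof, which is exactly what you have written: sparseness converts $|S|$ into $|E_S|$, the pointwise domination $\langle f\rangle_{S,r}\lesssim M_rf(x)$ for $x\in E_S$ together with disjointness of the $E_S$ collapses the sum into $\int_{\Ha}M_rf\,M_sg\,dx$, and H\"older plus the $L^{p/r}$- and $L^{p'/s}$-boundedness of the Hardy--Littlewood maximal operator finishes. The two points that genuinely need checking in the Heisenberg setting are both in order: the cubes of Theorem \ref{thm:homogrid} are sandwiched between Koranyi balls of comparable radius, so the average over $S$ is dominated (up to the doubling constant) by the ball average at any $x\in S$; and $(\Ha,d_L,dx)$ is a space of homogeneous type, so $M$ is bounded on $L^q$ for $q>1$. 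Your observation that the strict inequalities $r<p<s'$ are exactly what make $p/r>1$ and $p'/s>1$ is the right accounting of the hypotheses, and the dyadic-maximal-operator variant you sketch at the end is an equally valid alternative.
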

We have thus proved the following result which is the main theorem in this article.
\begin{thm}
	 For $ n \geq 2,$ the lacunary spherical maximal function $ M_{\Ha}^{lac}  $ is bounded on $ L^p(\Ha) $ for all $ 1< p < \infty.$
\end{thm}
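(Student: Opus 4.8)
The plan is to obtain this theorem as a direct consequence of the sparse domination in Theorem~\ref{thm:sparse} together with the mapping properties of sparse forms recorded in the proposition above. Fix $p_0\in(1,\infty)$; we must show $\|M_{\Ha}^{lac}f\|_{p_0}\le C_{p_0}\|f\|_{p_0}$ for all $f\in L^{p_0}(\Ha)$.

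\emph{Reduction to bounded, compactly supported data.} Since $M_{\Ha}^{lac}(|f|)\ge M_{\Ha}^{lac}f\ge 0$, we may assume $f\ge 0$. If $f\in L^{p_0}(\Ha)$ is nonnegative, write $f=\lim_{N\to\infty}f_N$ with $f_N=\min(f,N)\mathbf{1}_{B(0,N)}$ increasing to $f$; since each $A_{\delta^k}$ is positivity preserving, $A_{\delta^k}f_N\uparrow A_{\delta^k}f$ and hence $M_{\Ha}^{lac}f_N\uparrow M_{\Ha}^{lac}f$ pointwise, so by monotone convergence it suffices to prove the bound for $f$ bounded and compactly supported. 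For such $f$, duality gives $\|M_{\Ha}^{lac}f\|_{p_0}=\sup\langle M_{\Ha}^{lac}f,g\rangle$ over $g\ge 0$ with $\|g\|_{p_0'}\le 1$, and approximating $g$ from below by bounded, compactly supported functions we may assume $g$ is of this form as well.

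\emph{Applying the sparse bound.} For such a pair $(f,g)$ and any exponents $1<p,q<\infty$ with $(\frac1p,\frac1q)$ in the interior of the triangle with vertices $(0,1),(1,0),(\frac{2n}{2n+1},\frac{2n}{2n+1})$, Theorem~\ref{thm:sparse} yields a $(p,q)$-sparse form with $\langle M_{\Ha}^{lac}f,g\rangle\le C\,\Lambda_{\mathcal{S},p,q}(f,g)$, while the cited proposition gives $\Lambda_{\mathcal{S},p,q}(f,g)\lesssim\|f\|_a\|g\|_{a'}$ for every $a$ with $p<a<q'$. It therefore remains only to note that for the prescribed $p_0$ one can choose $(\frac1p,\frac1q)$ in the interior of the triangle with $p<p_0<q'$. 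Indeed, every interior point lies strictly above the line $\frac1p+\frac1q=1$, i.e. $q<p'$, so $p<q'$ and the interval $(p,q')$ is nonempty; letting $(\frac1p,\frac1q)$ approach the vertex $(1,0)$ forces $p\to1^+$ and $q'\to1^+$, so $(p,q')$ can be made a small interval containing any $p_0$ near $1$, while approaching $(0,1)$ forces $p\to\infty$ and $q'\to\infty$, covering large $p_0$, and intermediate choices cover the remaining values by continuity. With such $(p,q)$ fixed we conclude $\langle M_{\Ha}^{lac}f,g\rangle\le C\|f\|_{p_0}\|g\|_{p_0'}\le C\|f\|_{p_0}$; taking the supremum over $g$ and undoing the reductions proves $\|M_{\Ha}^{lac}f\|_{p_0}\le C\|f\|_{p_0}$.

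I expect no genuine obstacle here: the entire difficulty is contained in Theorem~\ref{thm:sparse}, which in turn rests on the $L^p$-improving property of Section 3 and the continuity property of Section 4. The only step deserving a moment's attention is the last one --- fitting a given $p_0\in(1,\infty)$ strictly between $p$ and $q'$ for an admissible exponent pair $(p,q)$ --- and this is immediate from the geometry of the admissible region, in particular from the strict inequality $\frac1p+\frac1q>1$ on its interior.
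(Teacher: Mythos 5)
Your proposal is correct and follows essentially the same route as the paper: the theorem is deduced directly from the sparse domination of Theorem~\ref{thm:sparse} together with the $L^p$ bound $\Lambda_{\mathcal{S},p,q}(f,g)\lesssim \|f\|_a\|g\|_{a'}$ for $p<a<q'$ cited from Cruz-Uribe--Martell--P\'erez. The only additions are routine details the paper leaves implicit (the monotone-convergence reduction to bounded compactly supported data and the verification that every $p_0\in(1,\infty)$ fits strictly between $p$ and $q'$ for an admissible pair, which follows since the interior of the triangle lies strictly above the line $\tfrac1p+\tfrac1q=1$ and accumulates on its entire open hypotenuse).
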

Weighted norm inequalities are also well studied in the literature. To state the weighted boundedness properties of the sparse form we need to mention the following terminologies about classes of weights. A weight $w$ is a non-negative locally integrable function defined on $\Ha$.  Given $1<p<\infty$, the Muckhenhoupt class of weights $A_p$ consists of all $w$ satisfying
$$
[w]_{A_p}:=\sup_Q \langle w\rangle_{Q}\langle w^{1-p'} \rangle_{Q}^{p-1}<\infty
$$
where the supremum is taken over all cubes $Q$ in $\Ha$.  On the other hand, a weight $w$ is in the reverse H\"older class $\operatorname{RH}_p$, $1\le p<\infty$, if
$$
[w]_{\operatorname{RH}_p}=\sup_Q\langle w\rangle_Q^{-1}\langle w\rangle_{Q,p}<\infty,
$$
again the supremum taken over all cubes in $\Ha$. The following weighted inequality for sparse form has been proved in \cite{BFP}.
\begin{prop}
	\label{thm:BFP}
	Let $1\le p_0<q_0'\le\infty$. Then,
	$$
	\Lambda_{p_0,q_0}(f,g)\le \{[w]_{A_{p/p_0}}\cdot[w]_{\operatorname{RH}_{(q_0'/p)'}}\}^{\alpha}\|f\|_{L^p(w)}\|g\|_{L^{p'}(\sigma)}, \quad p_0<p<q_0',
	$$
	with $\alpha=\max\Big\{\frac{1}{p-1},\frac{q_0'-1}{q_0'-p}\Big\}$. 
\end{prop}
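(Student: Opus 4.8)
Since this weighted bound for the $(p_0,q_0)$-sparse form is due to Bernicot--Frey--Petermichl \cite{BFP} (see also \cite{CUMP} for the unweighted prototype), the plan is only to outline the idea. Write $\sigma=w^{1-p'}$ for the $L^p$-dual weight of $w$, and assume $f,g\ge 0$. The argument has three parts.

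First, because the exponents satisfy $p>p_0$ and $p<q_0'$ (so that $q_0<p'$), a per-cube application of H\"older's inequality separates the weights from the functions: for each cube $S$ of the sparse collection $\mathcal S$,
$$\langle f\rangle_{S,p_0}\le \langle |f|^p w\rangle_S^{1/p}\,\big\langle w^{1-(p/p_0)'}\big\rangle_S^{\frac1{p_0}-\frac1p},\qquad \langle g\rangle_{S,q_0}\le \langle |g|^{p'}\sigma\rangle_S^{1/p'}\,\big\langle \sigma^{1-(p'/q_0)'}\big\rangle_S^{\frac1{q_0}-\frac1{p'}}.$$
Second, the two ``weight'' averages on the right are controlled by the stated characteristic. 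Using $\sigma=w^{1-p'}$, the dual Muckenhoupt identity $[u]_{A_r}=[u^{1-r'}]_{A_{r'}}^{r'-1}$, and the standard relations between the classes $A_r$ and $\operatorname{RH}_s$ under the involution $w\mapsto w^{1-p'}$ --- in particular the fact that $w\in A_{p/p_0}\cap\operatorname{RH}_{(q_0'/p)'}$ forces $\sigma\in A_{p'/q_0}$ --- one shows that, uniformly in $S$, the product of these two weight averages is bounded by $\big([w]_{A_{p/p_0}}[w]_{\operatorname{RH}_{(q_0'/p)'}}\big)^{\alpha}$, up to factors of $\langle w\rangle_S$ and $\langle\sigma\rangle_S$ that are reabsorbed when the data averages are renormalised. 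Third, since $w,\sigma\in A_\infty$, sparseness of $\mathcal S$ with respect to Lebesgue measure upgrades (by reverse H\"older) to sparseness with respect to both $w$ and $\sigma$; feeding the first two parts into $\Lambda_{\mathcal S,p_0,q_0}(f,g)$, renormalising the data averages into $w$-averages of $|f|^p$ and $\sigma$-averages of $|g|^{p'}$, and then applying H\"older over $S\in\mathcal S$ together with the weighted Carleson embedding theorem for the (now weight-)sparse family $\mathcal S$, one arrives at
$$\Lambda_{\mathcal S,p_0,q_0}(f,g)\ \lesssim\ \big([w]_{A_{p/p_0}}[w]_{\operatorname{RH}_{(q_0'/p)'}}\big)^{\alpha}\,\|f\|_{L^p(w)}\,\|g\|_{L^{p'}(\sigma)}.$$

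The step I expect to be the main obstacle is the second one: extracting \emph{exactly} the power $\alpha=\max\{\frac1{p-1},\frac{q_0'-1}{q_0'-p}\}$, rather than a larger exponent, requires distributing the $A_{p/p_0}$- and $\operatorname{RH}_{(q_0'/p)'}$-characteristics carefully between the two weight averages. The two competing terms in the maximum correspond to the two extreme regimes: the first is governed by the $A_{p/p_0}$-factor and dominates as $p\to p_0^{+}$, while the second is governed by the reverse-H\"older factor and dominates as $p\to(q_0')^{-}$. The remaining ingredients --- the per-cube H\"older inequalities, the elementary identities among $A_r$ and $\operatorname{RH}_s$ weights, the passage from Lebesgue sparseness to weighted sparseness, and the weighted Carleson embedding for sparse families --- are by now routine tools in the theory of weighted norm inequalities.
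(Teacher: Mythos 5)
The paper does not actually prove this proposition: it is quoted verbatim from Bernicot--Frey--Petermichl \cite{BFP} with a bare citation, so there is no in-paper argument to compare against. Your outline is a faithful reconstruction of the standard proof from that reference: the per-cube H\"older step is correct (the exponents $\langle w^{1-(p/p_0)'}\rangle_S^{\frac1{p_0}-\frac1p}$ and $\langle \sigma^{1-(p'/q_0)'}\rangle_S^{\frac1{q_0}-\frac1{p'}}$ check out), the identification $\sigma=w^{1-p'}$ and the duality $w\in A_{p/p_0}\cap \operatorname{RH}_{(q_0'/p)'}\Leftrightarrow \sigma\in A_{p'/q_0}\cap \operatorname{RH}_{(p_0'/p')'}$ are the right structural facts, and the passage from Lebesgue sparseness to $w$- and $\sigma$-sparseness followed by the weighted Carleson embedding is exactly how the sum over $\mathcal S$ is closed. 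You correctly flag that the only genuinely delicate point is extracting the sharp exponent $\alpha=\max\{\frac1{p-1},\frac{q_0'-1}{q_0'-p}\}$ rather than some larger power; that computation is not carried out in your sketch, but since the paper itself treats the whole proposition as a black box from \cite{BFP}, your level of detail already exceeds the paper's. No gap in the sense of a wrong step; just be aware that as written this is an outline of the cited proof, not a self-contained one.
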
 Using this result we have the following weighted boundedness property for the lacunary maximal function:
\begin{thm}
	Let $n\ge2$ and define
	$$
	\frac{1}{\phi(1/p_0)}=\begin{cases}1-\frac{1}{2np_0}, \quad 0<\frac1p_0\le \frac{2n}{2n+1},\\
	2n\Big(1-\frac1p_0\Big), \quad \frac{2n}{2n+1}<\frac1p_0<1.
	\end{cases}
	$$
	Then $M^{lac}_{\Ha}$ is bounded on $L^p(w)$ for $w\in A_{p/p_0}\cap \operatorname{RH}_{(\phi(1/p_0)'/p)'}$ and all $1<p_0<p<(\phi(1/p_0))'$.
\end{thm}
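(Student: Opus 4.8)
The plan is to obtain the weighted estimate by feeding the sparse domination of Theorem~\ref{thm:sparse1} into the off-diagonal weighted bound for sparse forms recorded in Proposition~\ref{thm:BFP}; this is the by-now standard route, so the bulk of the work is exponent bookkeeping. First I would fix $p_0$ and $p$ with $1<p_0<p<(\phi(1/p_0))'$ and a weight $w\in A_{p/p_0}\cap\operatorname{RH}_{(\phi(1/p_0)'/p)'}$, write $\sigma=w^{1-p'}$ for the dual weight, and note that since $M_{\Ha}^{lac}f\ge 0$, duality with respect to the measure $w\,dx$ reduces matters to bounding the unweighted pairing $\langle M_{\Ha}^{lac}f,g\rangle$ for nonnegative $f$ and $g$ with $\|g\|_{L^{p'}(\sigma)}\le 1$; a further routine density argument lets me assume $f,g$ are compactly supported and bounded, which is exactly the hypothesis of Theorem~\ref{thm:sparse1}.

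Next I would pin down the role of $\phi$. The region where the sparse bound is available is the open triangle $T$ with vertices $(0,1)$, $(1,0)$ and $(\tfrac{2n}{2n+1},\tfrac{2n}{2n+1})$; since the third vertex satisfies $\tfrac{2n}{2n+1}+\tfrac{2n}{2n+1}>1$, the triangle lies on the side $\tfrac1p+\tfrac1q\ge 1$ of the line through the other two vertices, so every interior pair $\big(\tfrac1{p_0},\tfrac1{q_0}\big)\in T$ satisfies $\tfrac1{q_0}>1-\tfrac1{p_0}$, i.e. $q_0'>p_0$ --- precisely the hypothesis needed to invoke Proposition~\ref{thm:BFP} with the pair $(p_0,q_0)$. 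For $\tfrac1{p_0}$ fixed, an elementary computation with the two edges of $T$ incident to the vertex $(\tfrac{2n}{2n+1},\tfrac{2n}{2n+1})$ shows that the largest $\tfrac1{q_0}$ for which $\big(\tfrac1{p_0},\tfrac1{q_0}\big)\in\overline T$ equals $1/\phi(1/p_0)$: it is $1-\tfrac1{2np_0}$ on the edge from $(0,1)$ to $(\tfrac{2n}{2n+1},\tfrac{2n}{2n+1})$ when $\tfrac1{p_0}\le\tfrac{2n}{2n+1}$, and $2n\big(1-\tfrac1{p_0}\big)$ on the edge from $(\tfrac{2n}{2n+1},\tfrac{2n}{2n+1})$ to $(1,0)$ when $\tfrac1{p_0}>\tfrac{2n}{2n+1}$; the two formulas agree at the common vertex, and the conjugate $(\phi(1/p_0))'$ (which is $2np_0$ in the first regime) is the right endpoint of the claimed range of $p$.

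Given the target $p<(\phi(1/p_0))'$, I would then choose $q_0>\phi(1/p_0)$ close enough to $\phi(1/p_0)$ that $\big(\tfrac1{p_0},\tfrac1{q_0}\big)$ is interior to $T$ and $p<q_0'$ still holds; this is possible because $q_0$ is decreasing in $1/q_0$ and $q_0'\uparrow(\phi(1/p_0))'$ as $q_0\downarrow\phi(1/p_0)$. For this $q_0$, Theorem~\ref{thm:sparse1} produces a sparse family $\mathcal S$ with $\langle M_{\Ha}^{lac}f,g\rangle\le C\,\Lambda_{\mathcal S,p_0,q_0}(f,g)$, and Proposition~\ref{thm:BFP}, applied with the exponents $(p_0,q_0)$, the weight $w$ and $p\in(p_0,q_0')$, gives $\Lambda_{\mathcal S,p_0,q_0}(f,g)\lesssim\|f\|_{L^p(w)}\|g\|_{L^{p'}(\sigma)}$ with a constant depending only on the exponents and on $[w]_{A_{p/p_0}}$ together with the relevant reverse-H\"older characteristic of $w$, not on $\mathcal S$. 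Taking the supremum over $g$ and removing the support restriction by density then yields $\|M_{\Ha}^{lac}f\|_{L^p(w)}\lesssim\|f\|_{L^p(w)}$.

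The one step I expect to be the genuine obstacle is reconciling the \emph{open} triangle in Theorem~\ref{thm:sparse1} with the boundary exponent $\phi(1/p_0)$: taking $q_0$ strictly inside $T$ forces $q_0'<(\phi(1/p_0))'$, hence $(q_0'/p)'>(\phi(1/p_0)'/p)'$, so the hypothesis $w\in\operatorname{RH}_{(\phi(1/p_0)'/p)'}$ does not by itself give $w\in\operatorname{RH}_{(q_0'/p)'}$, which Proposition~\ref{thm:BFP} demands. This is where the openness of the Muckenhoupt and reverse-H\"older conditions is used: by the self-improvement (Gehring) property, $w\in\operatorname{RH}_{s}$ implies $w\in\operatorname{RH}_{s+\varepsilon}$ for some $\varepsilon>0$, so by choosing $q_0$ sufficiently close to $\phi(1/p_0)$ one guarantees $w\in A_{p/p_0}\cap\operatorname{RH}_{(q_0'/p)'}$ and the argument closes. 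The remaining ingredients --- the exponent arithmetic identifying $\phi$, the weighted duality, and the density reduction --- are routine and can be carried out exactly as in the Euclidean treatment.
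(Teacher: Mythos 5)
Your proposal is correct and follows the same route the paper intends: feed the sparse domination of Theorem \ref{thm:sparse1} into the Bernicot--Frey--Petermichl bound of Proposition \ref{thm:BFP}, with $\phi(1/p_0)$ identified as the upper boundary of the triangle of admissible exponents (the paper states the two ingredients and leaves the rest implicit). Your explicit treatment of the only delicate point --- that the sparse bound holds only in the \emph{open} triangle, so one must take $q_0$ slightly inside and invoke the self-improvement of the $\operatorname{RH}_s$ classes to recover the stated class $\operatorname{RH}_{(\phi(1/p_0)'/p)'}$ --- is exactly the right fix and supplies a detail the paper glosses over.
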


\end{document}